\documentclass[11pt]{amsart}
\usepackage{graphicx} 
\usepackage{xcolor}
\usepackage{a4wide}
\usepackage{amssymb}

\definecolor{cipturq}{RGB}{0, 180, 158}

\definecolor{ser}{RGB}{200, 0, 0}

\usepackage{comment}

\usepackage{times}
\newtheorem{proposition}{Proposition}
\newtheorem{lemma}[proposition]{Lemma}

\newtheorem{corollary}[proposition]{Corollary}
\newtheorem{theorem}[proposition]{Theorem}
\newtheorem{remark}[proposition]{Remark}

\theoremstyle{definition}
\newtheorem{definition}[proposition]{Definition}
\newtheorem{notation}[proposition]{Notation}
\newtheorem*{remark*}{Remark}
\newtheorem*{example*}{Example}

\newcommand{\WL}{\operatorname{WL}}
\newcommand{\Dl}{\left( \frac{Z_t'}{Z_t} \right) (s)}
\newcommand{\Dlsz}{\left( \frac{Z_t'}{Z_t} \right) (s_0)}
\newcommand{\Dd}{\operatorname{D}}
\newcommand{\Rr}{\operatorname{R}} 
\newcommand{\cp}{\operatorname{cp}}  
\newcommand{\lp}{\left(}
\newcommand{\rp}{\right)}
\newcommand{\Diag}{\operatorname{Diag}}
\newcommand{\Area}{\operatorname{Area}}
\newcommand{\Res}{\operatorname{Res}}

\newcommand{\Spec}{\operatorname{Spec}}

\newcommand{\Tr}{\operatorname{Tr}}

\newcommand{\PSL}{\operatorname{PSL}}
\newcommand{\Deck}{\operatorname{Deck}}
\newcommand{\dist}{\operatorname{dist}}
\newcommand{\Diam}{\operatorname{Diam}}
\newcommand{\pr}{\operatorname{pr}}
\newcommand{\sys}{\operatorname{sys}}

\newcommand{\ad}{\operatorname{Ad}}
\newcommand{\Range}{\operatorname{Range}}
\newcommand{\Id}{\operatorname{Id}}
\newcommand{\Ll}{\operatorname{L}}
\newcommand{\Hh}{\operatorname{H}}
\newcommand{\End}{\operatorname{End}}
\newcommand{\cgot}{\mathfrak{C}}

\newcommand{\hz}{\mathbb{H}}
\newcommand{\sz}{\mathbb{S}}
\newcommand{\rz}{\mathbb{R}}
\newcommand{\cz}{\mathbb{C}}
\newcommand{\nz}{\mathbb{N}}
\newcommand{\cM}{\mathcal{M}}
\newcommand{\cC}{\mathcal{C}}
\newcommand{\cO}{\mathcal{O}}
\newcommand{\cT}{\mathcal{T}}
\newcommand{\zz}{\mathbb{Z}}
\newcommand{\Spin}{\mathrm{Spin}}
\newcommand{\spin}{\mathrm{spin}}
\newcommand{\SO}{\mathrm{SO}}
\newcommand{\so}{\mathrm{so}}
\newcommand{\dd}{\mathrm{d}}
\newcommand{\hol}{\mathrm{hol}}
\newcommand{\tGamma}{\tilde{\Gamma}}
\newcommand{\tX}{\tilde{X}}
\newcommand{\tp}{\tilde{p}}
\newcommand{\G}{\mathrm{PSL}_2(\rz)}
\newcommand{\SL}{\mathrm{SL}}
\newcommand{\Hol}{\mathcal{O}}
\newcommand{\Pspd}{\operatorname{P_{Spin(2)}}}
\newcommand{\PSO}{\operatorname{P_{\SO(2)}}}
\newcommand{\ball}{B_{\delta}(\lambda_0)}

\newcommand{\cR}{\mathcal{R}}
\newcommand{\MCG}{\operatorname{MCG}}
\newcommand{\Diff}{\operatorname{Diff}}

\newcommand{\tb}{\operatorname{tb}}

\newcommand{\Xs}{\operatorname{X_{cp}^1}}
\newcommand{\Xd}{\operatorname{X_{cp}^2}}
\newcommand{\ffb}{\operatorname{ff_b}}

\newcommand{\ffc}{\operatorname{ff_c}}
\newcommand{\II}{\mathcal{I}}
\newcommand{\Ss}{\mathcal{S}}
\newcommand{\PP}{\operatorname{P}}

\renewcommand{\spin}{\mathrm{Spin}}
\newcommand{\cpTr}{{}^{\cp}\!\Tr}

\begin{document}
\title[The Selberg Zeta function on the spin moduli space]{Asymptotics of the Selberg Zeta function on the spin moduli space}
\author{Cipriana Anghel}
\address{Mathematisches Institut, Georg-August-Universit\"at G\"ottingen,
Bunsenstr.\ 3-5, 37073 G\"ottingen, Germany and Institutul de Matematic\u a al Academiei Rom\^ane,
calea Grivi\c tei 21, Bucharest, Romania}

\email{cipriana.anghel-stan@mathematik.uni-goettingen.de}

\author{Sergiu Moroianu}
\address{Facultatea de Matematic\u a, strada Academiei 14, Bucure\c sti, and Institutul de Matematic\u a al Academiei Rom\^ane, calea Grivi\c tei 21, Bucharest, Romania}
\email{moroianu@alum.mit.edu}

\author{Rareș Stan}
\address{Institutul de Matematic\u a al Academiei Rom\^ane, calea Grivi\c tei 21, Bucharest, Romania}
\email{rares.stan@imar.ro}
\date{\today}

\begin{abstract}
We prove a limited asymptotic expansion up to order $t^4\log t$ of the logarithmic derivative of the Selberg zeta function for the spin Dirac operator on compact surfaces for families of hyperbolic metrics degenerating towards complete hyperbolic metrics with cusps in a pinching process where the lengths $l_j(t)$ of certain disjoint simple closed geodesics converge smoothly to $0$ as $t\to 0$.
\end{abstract}
\maketitle

\section{Introduction}\label{intro}
\subsection*{Zeta functions}
For every sequences of complex numbers $(l_j)$, $(\varepsilon_j)$, $j\geq 1$, consider the infinite product depending on a complex parameter $s$:
\begin{equation}\label{genzeta}
Z \lp s,(l, \varepsilon) \rp:=\prod_{j\geq 1} \prod_{m=0}^{\infty} \lp 1- \varepsilon_j e^{-l_j (s+m)}  \rp.
\end{equation}
Under rather mild growth conditions, this infinite product is absolutely convergent in some half-plane $\{\Re(s)\gg 0\}$, where it defines a holomorphic function, the \emph{Selberg zeta function} associated to the sequences $l$ and $\varepsilon$. For instance, when $l_j$ is the logarithm of the $j^{\text{th}}$ prime  and $\varepsilon_j\equiv 1$, the product is absolutely convergent for $\Re(s)>1$, yielding $Z(s)=\left(\prod_{m=0}^{\infty}\zeta(s+m)\right)^{-1}$ where $\zeta$ is Riemann's zeta. In this example, $Z$ extends meromorphically to $\cz$. However, for general sequences, the Selberg zeta function has no apparent reason to extend analytically outside its domain of absolute convergence.

\subsection*{Selberg Zeta functions for hyperbolic surfaces}
In some special cases where the sequences $l$ and $\varepsilon$ arise from geometric data, the function $Z$ does extend analytically to the complex plane. In this paper, $(X,g^X)$ will be an oriented hyperbolic surface, either compact or complete of finite volume, and $l_j=l_j(g^X)$ the $j^{\text{th}}$ length of primitive oriented closed geodesics on $X$ with respect to the metric $g^X$, counted with multiplicity. The developing map of $g^X$ induces a faithful, discrete representation of $\Gamma:=\pi_1(X,p)$ in $\G$, well defined up to conjugacy in $\G$. The set of all oriented closed geodesics on $X$ is in bijection with the set $\{\rho(\Gamma)\}_{\text{hyp}}$ of hyperbolic conjugacy classes in $\rho(\Gamma)$. For any \emph{bounded} class function $\varepsilon:\{\rho(\Gamma)\}_{\text{hyp}}\to \cz$, the Selberg zeta product is absolutely convergent when $\Re(s)>1$. Note that one could relax the boundedness condition when studying zeta functions arising from non-unitary representations, for us however $\varepsilon$ will always take values in $\{\pm 1\}$.

\subsection*{Functionals on the Riemann moduli space} 
For a compact oriented hyperbolic surface $(X,g^X)$ of genus $g$, the classical Selberg zeta function
 is defined by taking $\varepsilon\equiv 1$: 
\begin{equation}\label{szcla}
Z \lp s,(X,g^X) \rp = 
\prod_{j\geq 1} \prod_{m=0}^{\infty} \lp 1- e^{-l_j(g^X) (s+m)}  \rp.
\end{equation}
It extends to an entire function on $\cz$, satisfies a functional equation with respect to the involution $s\mapsto 1-s$, and has non-trivial zeros precisely on the line $\{\Re(s)=1/2\}$. For any diffeomorphism $\Phi$ of $X$, the zeta functions for the metrics $g^X$ and $\Phi^*g^X$ clearly coincide. We get in this way a function defined on the moduli space of Riemann surfaces, taking values in entire functions:
\[Z:\cM_g\to \cO(\cz).
\]
The value at $s=1/2$ is particularly interesting: it is essentially equal to the regularized determinant of the Laplacian, so it provides a K\"ahler potential for the Weil-Petersson metric on $\cM_g$. It is natural to ask whether $Z$ can be extended in any meaningful way to the Deligne-Mumford compactification of the moduli space. This question was answered in the negative by Schulze \cite{Schulze}. The underlying reason seems to be the following: by the Selberg trace formula, the zeta function is related to the trace of a difference of resolvents of the Laplacian on $(X,g^X)$. However, on hyperbolic surfaces with cusps, corresponding to points in the compactification of $\cM_g$ by stable nodal curves, such a relative resolvent is never trace class. 

\subsection*{Selberg zeta functions on the spin moduli space} 
The set of spin structures on a Riemann surface $X$ is in bijection with $H^1(X,\zz/2\zz)$. Every spin structure determines a $\{ \pm 1 \}$-valued function $\varepsilon$ on the set of conjugacy classes in the fundamental group; for instance, for Riemann surfaces whose universal cover is the unit disk, the value of $\varepsilon$ on a hyperbolic conjugacy class is the holonomy in the spin bundle along the corresponding closed geodesic with respect to the unique complete hyperbolic conformal metric on $X$. The \emph{spin Selberg zeta function} is defined for $\Re(s) >1$ by the absolutely convergent product
\begin{equation} \label{szf}
Z \lp s,(X,g, \varepsilon) \rp:=\prod_{\mu\in\{\Gamma\}_{\mathrm{hyp}}} \prod_{m=0}^{\infty} \lp 1- \varepsilon(\mu) e^{-l(\mu) (s+m)}  \rp,  
\end{equation}
where $\mu$ runs along all the oriented primitive closed geodesics on $X$, and $l(\mu)$ is the length of the geodesic $\mu$. It differs from the standard Selberg zeta function \eqref{szcla} by the appearance of the sign factors $\varepsilon(\mu)$. 

For $X$ compact, it follows from the Selberg trace formula that the function $Z$ extends analytically to an entire function, and one can compute determinants of Dirac Laplacians in terms of special values of this function at half-integers \cite{dhocker}, \cite{sarnak}. Since isotopies preserve spin structures, one gets a function on the Teichm\"uller space with values in entire functions. This function descends to the quotient $\cM_g^\spin$ of $\cT_g$ by the finite index subgroup of those modular transformations preserving all spin structures on $X$, a Galois finite cover of $\cM_g$ that we call here the \emph{spin moduli space}:
\[Z:\cM_g^\spin\to \cO(\cz).
\]
It was proved by one of us \cite{rares} that the spin Selberg trace formula, and as a consequence the analytic extension of $Z$, still hold true for complete hyperbolic surfaces of finite area, under a topological assumption on the spin structure along the cusps. The key fact, first noticed by B\"ar \cite{Bar}, is that if $\varepsilon(\mu)=-1$ for every primitive \emph{parabolic} conjugacy class, then the spectrum of the Dirac operator on the surface with cusps is discrete. For a path of hyperbolic metrics on the compact surface $X$ degenerating towards a surface with cusps in a so-called \emph{pinching process}, detailed in Definition \ref{pinchingproc} below, 
Stan \cite[Theorem 5]{rares} proved the uniform convergence on compacts of $\cz$ of the zeta function on $X$, rescaled by a factor depending solely on the length of the systole, towards the zeta function of the limiting surface with cusps. This result will be interpreted below as finding the leading term of the zeta function's asymptotics along certain paths towards the boundary of the compactification of the spin moduli space $\cM_g^\spin$.

\subsection*{The pinching process}\label{pinchingproc}

Let $X$ be an oriented smooth compact surface of genus $g$, and fix a maximal set of mutually disjoint simple closed curves $(\gamma_j)_{1\leq j\leq3g-3}$. By cutting along these curves we obtain a topological pants decomposition of the surface $X$.
Fix $1\leq k\leq 3g-3$, denote by $\gamma$ the union of the curves $\gamma_1,\ldots,\gamma_k$, and set $X_0=X\setminus \gamma$ the complement of $\gamma$ in $X$.

\begin{definition} 
A smooth family of hyperbolic metrics $(g_t)_{t\in (0,1)}$ on $X$ describes a \emph{pinching process} of $X$ along the curve $\gamma:=\bigcup_{j=1}^k\gamma_j$ if the following conditions are satisfied:
\begin{itemize}
\item The possibly disconnected curve $\gamma$ is geodesic with respect to $g_t$, for all $t>0$.
\item There exists a  complete hyperbolic metric $g_0$ on $X_0=X \setminus \gamma$ such that the restriction of $g_t$ to $X_0$ extends in $t=0$ to a smooth family of metrics $\{g_t\}_{t \in [0,1)}$.
\item  For all $j\leq k$, the length function of $\gamma_j$ with respect to the metrics $g_t$,
\begin{align*}l(\gamma_j):(0,1)\to \rz,&&  t\mapsto l_t(\gamma_j)
\end{align*}
 extends smoothly in $t=0$, and moreover $\lim_{t\to 0}l_t(\gamma_j)=0$.
\item Near a geodesic $\gamma_j$, the metrics $g_t$ takes the form
\[ g_t = \frac{\dd x^2}{x^2 + l_t^2(\gamma_j)} + (x^2 + l_t^2(\gamma_j)) \dd\theta^2; \quad (x,\theta) \in \left( -\frac{l_t(\gamma_j)}{\sinh\frac{l_t(\gamma_j)}{2}}, \frac{l_t(\gamma_j)}{\sinh\frac{l_t(\gamma_j)}{2}} \right) \times [0, 2\pi]. \]
\end{itemize}
\end{definition}
In the limit $t=0$, we obtain a possibly disconnected, non-compact surface $X_0$ with $2k$ cusps, endowed with a complete hyperbolic metric $g_0$. By Gauss-Bonnet, the area remains constant throughout this process, including at $t=0$. 
It follows from the definition that $l_t(\gamma_j)$ has a positive limit as $t\to 0$, for all $j\geq k+1$, and that
 $\theta_t(j)$ has a limit when $t \to 0$ for all $j\leq 3g-3$, where $\theta_t(j)$ is the twisting parameter along $\gamma_j$ for the metric $g_t$ in the Fenchel-Nielsen coordinates on the Teichm\"uller space $\cT_g$. 
 
The $C^0$ behaviour of the length spectrum and also of the spectrum of the Laplace and Dirac operators under such degenerations were studied among others by Ji \cite{Ji}, Colbois-Courtois \cite{colbois-courtois}, B\"ar \cite{Bar}, Schulze \cite{Schulze}, Pf\"affle \cite{pfaffle}, and more recently Stan \cite{rares}. It also motivated the construction by Anghel \cite{cipriana} of a pseudodifferential calculus with parameters containing the resolvents of the family of Dirac operators in a pinching process.

\subsection*{Results}
The motivating goal of the present article is to derive a refined asymptotic expansion of the family of Selberg zeta functions along a pinching process near $t=0$. We stress again that the analytic properties needed even for $C^0$ convergence only hold under the topological assumption  that the class function $\varepsilon$ determined by the spin structure does not take the value $+1$ on the free homotopy class of any one among the $k$ pinched geodesics. Several difficulties arise in this task: accounting for the change in the topology of the underlying surface when $t$ -- the degenerating parameter -- becomes $0$; controlling uniformly the derivatives of the lengths of closed geodesics with respect to $t$; and extending smoothness results from the domain of convergence $\{\Re(s)>1\}$ of the Zeta function to the rest of the complex plane. To address these difficulties, we rely heavily on our previous works \cite{rares}, where the leading term in the asymptotic was found, and \cite{cipriana}, which offers the adequate tool for improving the continuity result of \cite{rares}, namely the cusp-surgery pseudodifferential calculus, first developed in \cite{ars} in a slightly more general context. 

As a preliminary step towards our goal, we start by proving carefully that the Selberg zeta function depends not only continuously, but actually \emph{smoothly} on $t$ for $t>0$, and more generally, for any smooth family of hyperbolic metrics indexed by some manifold $\cM$ (Theorem \ref{smdepz}). 

\begin{theorem}\label{thmain1}
Let $X$ be a compact oriented smooth surface of genus $g$. For every fixed spin structure on $X$,
the family of Selberg zeta functions $\{Z_t(s)\}_{t\in\cM_g^\spin}$ defines a smooth function on the spin moduli space with values in entire functions, i.e., it belongs to $\mathcal C^{\infty} \lp \cM_g^\spin, \Hol (\mathbb C) \rp$. 
\end{theorem}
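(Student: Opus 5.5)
Since $\cM_g^\spin$ is the quotient of $\cT_g$ by a group acting properly discontinuously and $Z_t$ is invariant under that group, it suffices to prove smoothness on Teichm\"uller space, i.e.\ for a smooth family of hyperbolic metrics $(g_t)_{t\in\cM}$ on $X$ indexed by a manifold $\cM$ (locally, Fenchel--Nielsen coordinates). Smoothness with values in $\Hol(\cz)$ is local in $t$ and uniform on compacts in $s$, so we fix $t_0$, a small neighbourhood $U\ni t_0$ and a compact $K\subset\cz$. For a Fr\'echet-valued map the goal is that $(t,s)\mapsto Z_t(s)$ is jointly continuous with all $t$-derivatives holomorphic in $s$ and locally bounded. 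By Cauchy estimates it is then enough to bound $\partial_t^\alpha Z_t(s)$ uniformly for $t\in U$ and $s$ in a slightly larger compact.

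\emph{Step 1: the half-plane of convergence.} For $\Re s>1$ write $\log Z_t(s)=-\sum_{\mu}\sum_{m\ge0}\sum_{k\ge1}\frac{\varepsilon(\mu)^k}{k}e^{-k l_t(\mu)(s+m)}$. Here $\varepsilon(\mu)$ is independent of $t$, since the spin structure and the free homotopy class are fixed and $\varepsilon$ is topological. Each length $l_t(\mu)$ is smooth in $t$: it equals $2\operatorname{arccosh}(|\Tr\rho_t(\mu)|/2)$, and the holonomy representation $\rho_t$ depends smoothly on $t$. The key estimate is a uniform bound $|\partial_t^\alpha l_t(\mu)|\le C_\alpha\, l_{t_0}(\mu)$ for $t\in U$. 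I would derive it from the quasi-isometry between the metrics $g_t$ and $g_{t_0}$: the Lipschitz constant is $1+O(|t-t_0|)$, so $e^{-c|t-t_0|}\le l_t/l_{t_0}\le e^{c|t-t_0|}$. Higher derivatives would follow from Wolpert-type formulas expressing $\partial l$ as an integral along the geodesic of smooth tensors, each bounded by a constant times the length. Combined with the prime geodesic theorem $\#\{l_{t_0}(\mu)\le L\}=O(e^{L}/L)$, this gives absolute and locally uniform convergence of all $t$-derivatives of the series for $\Re s>1+\delta$. Hence $Z$ is smooth there.

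\emph{Step 2: extension to all of $\cz$.} This is the main obstacle, because the product does not converge for $\Re s\le1$. I would use the spin Selberg trace formula to express $\frac{Z_t'}{Z_t}(s)$ through the spectral side. One route is the resolvent identity
\[
\frac{1}{2s-1}\frac{Z_t'}{Z_t}(s)=\Tr\bigl[(D_t^2+(s-\tfrac12)^2)^{-1}-(D_t^2+(s_0-\tfrac12)^2)^{-1}\bigr]+(\text{explicit topological terms}),
\]
with $\Area$ constant by Gauss--Bonnet. Conjugating $D_t$ to a fixed Hilbert space by a $t$-dependent isometry of spinor bundles, $D_t$ becomes a smooth family of elliptic first-order operators on the fixed compact $X$. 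The difference of resolvents is then smooth in $t$ in trace norm, locally uniformly in $s$ away from the spectrum, because the eigenvalues $\pm i(s-\frac12)$ vary continuously. Near a zero of $Z_{t_0}$ I would instead use a contour $|s-s_1|=r$ avoiding zeros for $t\in U$ and represent $Z_t(s)$ there via $\exp\int \frac{Z_t'}{Z_t}$ together with a Riesz-projection argument.

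A cleaner alternative is to write $Z_t(s)$, up to explicit Gamma-factors, as a zeta-regularized determinant $\det_\zeta(D_t^2+(s-\tfrac12)^2)$. Smooth dependence of zeta-determinants on smooth families of elliptic operators is standard, through heat-kernel parametrices depending smoothly on parameters. This gives $Z\in\mathcal C^\infty(U\times\cz)$, holomorphic in $s$, and therefore in $\mathcal C^\infty(U,\Hol(\cz))$. It agrees with Step 1 on $\Re s>1$ by uniqueness of analytic continuation. Descending to $\cM_g^\spin$ by invariance then completes the proof.
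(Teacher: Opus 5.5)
Your reduction to smooth families over $\cT_g$ and your Step~2 (trace formula with a base point $s_0$ in the half-plane of convergence, smooth resolvent differences, then integration and exponentiation with a separate argument near zeros) follow the paper's proof of Theorem~\ref{smdepz}. The genuine difference is Step~1. The paper never proves a linear bound on $\partial_t^\alpha l_t$. It combines a uniform Milnor--\v{S}varc lemma (Lemma~\ref{Lwlgl}: $\WL([\eta])\le A\,l_t([\eta])$ for $t$ in a compact set) with the Leibniz rule applied to $\Tr\rho_t(a_1\cdots a_M)$. This gives only $|\partial_t^j l_t|\le K_j e^{B_j l_t}$ (Lemma~\ref{lemaboundgeod}). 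Since $B_j$ grows with $j$, one gets $\mathcal C^m$ regularity only on a half-plane $\Re s>\cgot_m$ depending on $m$. That is enough, because the trace formula needs control at just one base point $s_0$ for each $m$. Your bound is stronger and would give all derivatives on $\Re s>1$ at once. The first-derivative case via $g_t\le e^{2c|t-t'|}g_{t'}$ is rigorous. For $|\alpha|\ge 2$, however, ``integrals of smooth tensors along the geodesic'' is not yet an argument. Differentiating $\partial_t l_t=\frac12\int_{\gamma_t}\dot g_t(T,T)\,ds$ again brings in the variation $\partial_t\gamma_t$ of the geodesic itself, and you must bound it pointwise independently of the length. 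This can be done: its normal component solves $J''-J=F$ with $F$ bounded, and the periodic Green's function $\cosh(l/2-|s-s'|)/(2\sinh(l/2))$ has $L^1$-norm $1$ for every $l$. Inductively, all such terms are then $O(l)$. You need to say this, or else fall back on the paper's cruder word-length bound, which suffices.

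Two remarks on Step~2. First, your displayed identity omits the term $\frac{1}{2s_0-1}\frac{Z_t'}{Z_t}(s_0)$, and a factor $\frac12$ in front of the trace. This integration constant is not topological, and its smoothness in $t$ is exactly what Step~1 must supply. It is also the term that prevents the paper from controlling $Z_t$ itself at $t=0$. Second, near zeros of $Z_{t_0}$ (both the spectral ones $\frac12+i\lambda$ and the topological ones $\frac12-n$) you can avoid Riesz projections altogether. The expression $Z_t(w)=Z_t(s_0)\exp\int_{s_0}^w Z_t'/Z_t$ is path-independent by integrality of residues, and it is smooth in $t$ on a circle avoiding all zeros for $t\in U$; Cauchy's formula then gives smoothness inside. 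The paper instead subtracts $\frac{d}{ds}\log Q_t$ built from spectral projections (Proposition~\ref{comppoli}) and handles $\frac12-n$ via the functional equation.

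Your determinant alternative is a genuinely different route that needs no length-spectrum derivative estimates at all. It presupposes a Sarnak-type identity $Z(s)=\det_\zeta(\Dd^2+(s-\frac12)^2)\,F_g(s)$ with $F_g$ independent of the metric. To get that, the two integration constants $a+b(s-\frac12)^2$ must be fixed from large-$s$ asymptotics, using that the heat invariants of $\Dd^2$ are multiples of the area, hence topological by Gauss--Bonnet. It also needs smooth dependence of $\det_\zeta$ on parameters. The paper's trace-formula route has the advantage that it is the one reused in the pinching analysis at $t=0$.
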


The same result holds, with the same proof, not only for the spin Selberg zeta function \eqref{szf}, but also for the classical zeta function corresponding to the Laplacian, and for their variants when we twist by a flat unitary bundle.

The proof of Theorem \ref{thmain1} is not obvious, in particular it depends crucially on a \emph{uniform} Milnor-\v{S}varc Lemma, allowing us to bound the word length of a conjugacy class $[\eta]$ in the fundamental group of $X$ (see Definition \ref{defwordlength}) in terms of the length of the corresponding geodesic, uniformly for $t$ varying in a compact subset of the parameter space $\cM $. The Selberg trace formula also plays a key role in the proof, suggesting that the smoothness result is in fact unlikely to hold for smooth families of metrics of variable negative curvature, or for twisting functions $\varepsilon$ unrelated to some geometric elliptic differential operators. We would expect it however to hold with only minor changes for zeta functions corresponding to Laplacians twisted by flat bundles with non-unitary holonomy.

Our main result (Theorem \ref{mainth}) is an asymptotic expansion up to terms of order $t^4\log t$ of (the logarithmic derivative of) the zeta function in the pinching process as the systole shrinks to $0$, extending Stan's result. 

Consider the family of holomorphic functions $Z_t(s):=Z(s,(X,g_t,\varepsilon))$ for $t> 0$, together with $Z_0(s)=Z(s,(X_0,g_0,\varepsilon))$ at $t=0$. We have already mentioned that, somewhat unsurprisingly, $Z_t(s)$ is smooth in $(t,s)\in(0,\infty)\times \cz$. Near $t=0$, Stan's formula \cite[Theorem 3.5]{rares} states that 
\[
\lim_{t\to 0}\left(Z_t(s)\exp\left(-\frac{\pi^2}{6}\sum_{j=1}^kl_t(\eta_j)^{-1}\right)
\right) = 2^{k(1-2s)}Z_0(s).
\]
Our main result (Theorem \ref{mainth}) is an asymptotic expansion of (the logarithmic derivative of) the zeta functions in the pinching process as $l_t(\gamma_j)$ shrink to $0$, extending in this sense Stan's result. The price to pay for this extension is one extra derivative in $s$, as we are not able to prove more than $C^0$ convergence of $Z_t'(s_0)/Z_t(s_0)$ for any value of $s_0$.

\begin{theorem}\label{mainth}
Let $R>0$ such that
$\mathcal C_R(0) \cap  \lp  \tfrac{1}{2} + i \Spec \Dd_{0} \rp = \emptyset.$
Then there exists $t_0=t_0(R) >0$ and a rational function $Q_{t,R}(s)$ in $s$ with smooth coefficients in $t\in[0,t_0)$, so that for $(t,s) \in [0,t_0) \times B_R(0)$ we have 
\begin{align*}
{}&\frac{\dd}{\dd s}  \frac{1}{2s-1} \left[ \lp \frac{Z_t'}{Z_t}  \rp(s) -  \lp \frac{Z_0'}{Z_0} \rp (s) +  2k \log 2  - \frac{1}{2} \frac{\dd}{\dd s} \log Q_{t,R}(s) \right]  \\
{}&\in  t \, \mathcal C^{\infty} \lp [0,t_0), \Hol \lp B_R(0) \rp \rp + t^4 \log t \, \mathcal C^{\infty} \lp [0,t_0), \Hol \lp B_R(0) \rp \rp.
 \end{align*}
 \end{theorem}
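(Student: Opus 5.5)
The plan is to express the logarithmic derivative of $Z_t$ through the spin Selberg trace formula as a trace of resolvents of the Dirac operator, and then read off the small-$t$ behaviour from the cusp-surgery calculus of \cite{cipriana}. For the compact surface $(X,g_t)$ the spin trace formula gives, for $\Re s>1$,
\[
\frac{1}{2s-1}\Big(\frac{Z_t'}{Z_t}\Big)(s)=\Tr\Big[(\Dd_t^2+(s-\tfrac12)^2)^{-1}-(\Dd_t^2+(s_1-\tfrac12)^2)^{-1}\Big]+\text{(area term)},
\]
with $s_1$ a fixed auxiliary point. The area term is a digamma-type expression depending only on $\Area$, so it is independent of $t$ by Gauss--Bonnet. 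Stan's version for the cusped surface $(X_0,g_0)$ is the same formula with $\Dd_0$, whose spectrum is discrete by B\"ar's result, since $\varepsilon=-1$ on the pinched classes. Differentiating once more in $s$ removes the dependence on $s_1$. This leaves
\[
\frac{\dd}{\dd s}\frac{1}{2s-1}\frac{Z_t'}{Z_t}(s)=-(2s-1)\Tr(\Dd_t^2+(s-\tfrac12)^2)^{-2}+c,
\]
where $(\Dd_t^2+\lambda^2)^{-2}$ is trace class on $X$ for $t>0$ and on $X_0$ for $t=0$. Both sides are meromorphic in $s$. The expression $\frac{1}{2}\frac{\dd}{\dd s}\log Q_{t,R}$ will absorb the poles coming from eigenvalues of $\Dd_t$ inside the disk. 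Concretely, $Q_{t,R}(s)=\prod(s-\tfrac12-i\lambda_j(t))$ is taken over the finitely many eigenvalues with $\tfrac12+i\lambda_j(t)\in B_R(0)$, for $t$ small. By the hypothesis $\mathcal C_R(0)\cap(\tfrac12+i\Spec\Dd_0)=\emptyset$ and the spectral convergence of \cite{rares}, the number of such eigenvalues is constant for $t<t_0$. The symmetric functions of these eigenvalues are given by contour integrals of $\lambda^m$ times the resolvent traces over $\mathcal C_R(0)$, so their smoothness in $t$ follows from that of the resolvent.

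The heart of the proof is to show that on $\mathcal C_R(0)$, and therefore by the maximum principle in $B_R(0)$ after subtracting the poles, the difference of renormalized traces of $(\Dd_t^2+\lambda^2)^{-2}$ and $(\Dd_0^2+\lambda^2)^{-2}$ lies in $t\,\mathcal C^\infty+t^4\log t\,\mathcal C^\infty$.

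First I will use \cite{cipriana} to represent the resolvent family $(\Dd_t-\mu)^{-1}$, for $\mu$ away from $\Spec\Dd_0$ and $t<t_0$, as an element of the cusp-surgery pseudodifferential calculus with parameters, with polyhomogeneous dependence on $t$. Its square then also lies in the calculus; for trace purposes it has order $-2$ in a two-dimensional problem.

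Next I will apply a pushforward theorem for the (renormalized) trace on the surgery space $\Xd$. The trace of an operator in the calculus is polyhomogeneous in $t$. Its interior face contributes the $X_0$ trace at $t=0$, and the front faces $\ffb,\ffc$ at the collars contribute the remaining terms. The constant $-2k\log 2$ comes from the model operator on the front face of each of the $k$ collars: the explicit Bessel-type model problem on $\rz\times S^1$ with antiperiodic spin structure, together with the normalization $2^{k(1-2s)}$ of Stan's formula, produces the constant $2k\log2$ in $Z_t'/Z_t$.

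The main obstacle is the index set computation. I need to verify that, after the explicit subtraction, the expansion contains no powers $t^j$ with $j<1$ other than $t^0$, no logarithms below order $t^4\log t$, and that the first logarithmic term is $t^4\log t$. This requires the smooth dependence of $l_t(\gamma_j)$ on $t$ and the fact that $\varepsilon=-1$ kills the zero Fourier mode in the collar. The antiperiodic modes decay like $e^{-|x|/l}$-type, and the first log arises from a clash of exponents at the corner $\ffb\cap\ffc$. I would first try computing the model traces with the explicit collar metric $\frac{\dd x^2}{x^2+l^2}+(x^2+l^2)\dd\theta^2$ by Fourier decomposition in $\theta$. The lowest mode, with frequency $\tfrac12$, produces index sets $\{0,1,2,\dots\}$ from one face and $\{4\}$ from the other, whose coincidence yields the $t^4\log t$ term.

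Finally, multiplying by the $\mathcal C^\infty(t)$ coefficients and integrating $\lambda$ over the contour gives the stated membership.
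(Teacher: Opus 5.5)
Your argument breaks at the accounting of the constant $2k\log 2$. Stan's trace formula on the cusped surface $X_0$ is not ``the same formula with $\Dd_0$''. With $2k$ cusps it contains an extra parabolic term $2k\log 2\,\bigl(1-\frac{2s-1}{2s_0-1}\bigr)$. After dividing by $2s-1$ and differentiating, this term gives $-\frac{4k\log 2}{(2s-1)^2}=\frac{d}{ds}\frac{2k\log 2}{2s-1}$, which exactly cancels the $2k\log 2$ in the statement. The paper thereby obtains the exact identity
\[ \frac{d}{ds}\frac{1}{2s-1}\Bigl[\frac{Z_t'}{Z_t}+2k\log 2-\frac{Z_0'}{Z_0}\Bigr](s)=\bigl(\tfrac12-s\bigr)\Bigl(\Tr\bigl(\Dd_t^2+(s-\tfrac12)^2\bigr)^{-2}-\Tr\bigl(\Dd_0^2+(s-\tfrac12)^2\bigr)^{-2}\Bigr). \]
Your proposed source for the constant, a Bessel-type model on the collar front face, does not exist. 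Away from the spectrum the squared resolvent lies in $\Psi^{-4,-4,0}_{\cp}(X)$, because $\Dd^2\in\Psi^{2,2,0}_{\cp}(X)$. Proposition~\ref{tracecp} with $(\alpha,\beta)=(-4,0)$ then puts its trace in $\mathcal C^\infty+t^4\log t\,\mathcal C^\infty$, with value at $t=0$ equal to the $X_0$-trace. So the front face enters only at order $t^4\log t$. With your version of the cusp formula you are left with an unmatched term $-\frac{4k\log 2}{(2s-1)^2}$, which is nonzero at $t=0$.

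You could avoid the cusp formula by reading off the constant from Stan's limit $\frac{Z_t'}{Z_t}+2k\log 2\to\frac{Z_0'}{Z_0}$ together with continuity of the trace. You cannot get it from the front face. The same observation removes your ``main obstacle'': the index sets, including the first logarithm $t^4\log t$, follow directly from the calculus orders via the pushforward theorem, with no Fourier-mode analysis. Note also that $(\Dd_t-\mu)^{-2}$ has order $-2$ and is not trace class on a surface; you need the order $-4$ operator $(\Dd^2+(s-\frac12)^2)^{-2}$.

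Your choice $Q_{t,R}=\prod_j(s-\frac12-i\lambda_j(t))$ also fails. The residue of $\frac{Z_t'}{Z_t}$ at $\frac12+i\lambda$ equals the multiplicity $m$ of $\lambda$, but $\frac12\frac{d}{ds}\log$ of your product has residue $m/2$. More seriously, your $Q$ ignores the poles of $\frac{Z_0'}{Z_0}$ in $B_R(0)$ and is nonconstant at $t=0$. The left-hand side is then neither holomorphic on $B_R(0)$ nor zero at $t=0$, contradicting membership in $t\,\mathcal C^\infty+t^4\log t\,\mathcal C^\infty$.

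The paper instead takes
$Q_{t,R}=\det\bigl[P^t_R\Dd_t^2P^t_R+(s-\tfrac12)^2P^t_R\bigr]/\det\bigl[P^0_R\Dd_0^2P^0_R+(s-\tfrac12)^2P^0_R\bigr]$ (restricted to the ranges), so that $Q_{0,R}=1$. This comes from splitting with the spectral projector $P^t_R\in\Psi^{-\infty,-\infty,0}_{\cp}(X)$. The finite-rank part is computed via $\Tr(A+u)^{-2}=-\partial_u^2\log\det(A+u)$ with $u=(s-\frac12)^2$ and $\partial_u=\frac{1}{2s-1}\partial_s$. The remainder $(1-P_R)(\Dd^2+(s-\frac12)^2)^{-2}$ is holomorphic on all of $B_R(0)$, so Proposition~\ref{tracecp} applies directly on the disc. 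Your contour-plus-Cauchy-formula route would also work, but only once $Q$ is corrected in this way.
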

Here $\mathcal C_R(0)$ and $B_R(0)$ are the circle, respectively the open disc of radius $R$ centered at $0$.
The term $C^{\infty} \lp [0,t_0), \Hol \lp B_R(0) \rp \rp$ denotes a smooth function on the manifold with boundary $[0,t_0)\times B_R(0)$ which is holomorphic in the second variable $s$.

When trying to analyze the zeta function itself, rather than its logarithmic derivative, the key challenge arises from the fact that the Milnor-\v{S}varc lemma no longer holds for surfaces with cusps. Ultimately, we stop short of improving Stan's $C^0$ convergence for $Z_t$ at $t=0$.
We intend to overcome these difficulties in a future work, possibly by using explicit formul\ae \ due to C.\ Series  \cite{series} and S.\ Wolpert \cite{wolpert} for derivatives of lengths of geodesics along certain paths towards the boundary of the Teichm\"uller space.

\subsection*{Acknowledgements}
We are indebted to Ksenia Fedosova for useful discussions. The authors were partially supported from the PNRR project III-C9-2023-I8-CF149. Cipriana Anghel received funding of the Klaus Tschira Boost Fund, a joint initiative of GSO – Guidance, Skills \& Opportunities e.V. and Klaus Tschira Stiftung.

\section{Preliminaries}
\subsection{Spin structures in dimensions $1$ and $2$}\label{epsilon}

The group $\Spin(n)$ is a compact Lie group equipped with a $2:1$ morphism $\Spin(n)\to \SO(n)$. 
For $n\geq 3$, $\Spin(n)$ is the universal cover of $\SO(n)$. In low dimensions this is no longer the case: 
\begin{align*}
\Spin(1)=\{\pm 1\},&& \Spin(2)=\sz^1.
\end{align*}

A \emph{spin structure} on an oriented Riemannian manifold $X$ of dimension $n$ is a $2:1$ covering of the oriented frame bundle $P_\SO X$ by a principal $\Spin(n)$ bundle $P_\Spin X$, compatible with the $2:1$ morphism between the structure groups. The Levi-Civita connection $1$-form lifts from $P_\SO X$ to a $\spin(n)$-valued connection on $P_\Spin X$.
The \emph{spinor bundle} is a complex vector bundle associated to $P_\Spin X$ via the spinor representation of $\Spin(n)$ on $\cz^{2^{[n/2]}}$, obtained by restriction from the standard representation of the Clifford algebra. It has a induced Levi-Civita covariant derivative, and a Clifford action of the (co)tangent bundle of $X$. The \emph{spin Dirac operator}, an elliptic, symmetric, first-order differential operator on spinors, is defined as the composition of the Clifford multiplication with the covariant derivative.

Spin structures always exist on an oriented manifold $X$ of dimension $1$ or $2$, and they are in bijection with the cohomology space $H^1(X,\zz/2\zz)$. 

\subsubsection{Spin structures on the circle}
On the circle $\sz^1=\rz/2\pi \zz$ there exist two spin structures, namely the two covering spaces over $\sz^1$ with Deck transformation group $\{\pm 1\}$. One of those, whose total space consists of two disjoint copies of $\sz^1$, is called the \emph{trivial spin structure on $\sz^1$}. The second one, called \emph{nontrivial}, has total space $\sz^1$, and the covering map is the square map $e^{it}\mapsto e^{2it}$.

The spinor bundle for both spin structures is the trivial line bundle $\cz\times \sz^1\to \sz^1$. The corresponding Levi-Civita connections have holonomy $1$ for the trivial spin structure and $-1$ for the nontrivial one. The associated Dirac operators are
$i\frac{\dd}{\dd t}$, respectively $i\frac{\dd}{\dd t}+\frac{1}{2}$. From an analytic point of view, following B\"ar \cite{Bar}, it is important to note that the first of these two operators has a nontrivial nullspace.

\subsubsection{Spin structures on surfaces}
The $2:1$ morphism $\pi:\Spin(2)\to \SO(2)$ is given by
\[
e^{i\theta}\mapsto 
\begin{bmatrix} 
\cos (2\theta) & \sin(2\theta) \\
-\sin(2\theta)& \cos(2\theta)
\end{bmatrix}.
\]
At the level of Lie algebras, the induced isomorphism $\pi_*$ is
\[
i\rz\ni it\mapsto \begin{bmatrix} 
0&2 \\
-2& 0
\end{bmatrix}\in\so(2).
\]

On a compact oriented surface of genus $g$ there are $2^{2g}$ spin structures. If the surface has $n\geq 1$ punctures, this number becomes $2^{2g+n-1}$. 
For each spin structure $P_\Spin X$, the associated line bundles $P_\Spin X\times_{\rho^\pm}\cz$ with respect to the unitary representations of rank $1$
\[
\rho^\pm(e^{it})= \begin{bmatrix} 
\cos (\theta) & \pm\sin(\theta) \\
\mp\sin(\theta)& \cos(\theta)
\end{bmatrix}
\]
are the so-called positive, respectively negative \emph{chiral spinor bundles} $S^\pm X\to X$. Their direct sum is the spinor bundle on $X$, denoted $SX$.
The Levi-Civita covariant derivative on $TX$ can be interpreted as a $\so(2)$-valued $\ad$-covariant $1$-form on the unit frame bundle $P_\SO X$. Its lift to $P_\Spin X$, composed with the inverse of the isomorphism $\pi_*: i\rz\to \so(2)$, is a connection $1$-form on $P_\Spin X$. The Levi-Civita covariant derivative on spinors is defined as the covariant derivative on $S^\pm X$ associated to this connection and to the spinor representations $\rho^\pm$.

Let $\gamma$ be an immersed closed geodesic in $X$. The tangent vector field to $\gamma$ and its rotation by $\pi/2$ form a parallel orthonormal frame, so the holonomy along $\gamma$ in the frame bundle is trivial. Any local lift of a parallel frame is a parallel section in $P_\SO X$. It follows that the holonomy $\hol^\Spin(\gamma)$ along the geodesic $\gamma$ in the spin bundle is either $1$ or $-1$. 

\begin{definition}\label{defep}
For every immersed closed geodesic $\gamma$ in $X$, set
\[
\varepsilon(\gamma)= \hol^\Spin(\gamma)\in \{\pm 1\}.
\]
\end{definition}
This sign function defined on the set of closed geodesics, and its extension to some "almost-geodesic" curves on manifolds with cusps, plays an important role in the definition of the Selberg Zeta function associated to the spin structure. In particular, we will only deal in this paper with shrinking simple closed geodesics on which the $\varepsilon$ function takes the value $-1$. Equivalently, we shall assume that the spin structure induced on the simple closed geodesics shrinking to $0$ is nontrivial. 

\subsubsection{Spin structures on complete hyperbolic surfaces}
A hyperbolic surface is a Riemannian $2$-fold with Gauss curvature $-1$. By a classical result due to Hopf, the universal cover $\tilde X$ of a complete hyperbolic surface $(X,g)$ is isometric to the hyperbolic upper-half plane $\hz$, and the isometry is unique up to post-composing with an isometry of $\hz$. An oriented isometry $\Phi:\tilde X\to\hz$ induces an isomorphism of $\SO(2)$-principal bundles between the oriented orthonormal frame bundles $P_\SO\tX\to \tX$ and $P_\SO\hz\to\hz$. Recall also that the group of oriented isometries of $\hz$ is 
$\G=\SL_2(\rz)/\{\pm 1\}$, and that this group acts transitively on the set of oriented orthonormal frames on $\hz$. It follows that, after fixing an orthonormal frame $(\partial_x,\partial_y)$ at $i\in\hz$, we obtain an identification of $P_\SO\hz$ with the group $\G$ as $\SO(2)$ principal bundles. Then the canonical projection $\SL_2(\rz)\to \G$ satisfies the requirements to be a spin structure on $\hz$, where $\SL_2(\rz)$ is equipped with the standard right action of the group $\Spin(2)=\sz^1$ by rotations. Since $\hz$ is contractible, its first cohomology group with coefficients in $\zz/2\zz$ vanishes, so this spin structure is unique.

Let $\Gamma=\pi_1(X,p)$ denote the fundamental group of $X$ for a fixed base point $p$. Fix also a preimage $\tp$ of $p$ in the universal cover $\tX$ (for instance, take $\tp$ to be the relative homotopy class of the constant loop at $p$). By this choice, $\Gamma$ is identified with the Deck transformations group of $\tX\to X$. The action of $\Gamma$ on $\tX$ extends to an action on $T\tX$. Fix a positively oriented orthonormal frame on $(X,g)$ at $p$, and denote by $\Phi:\tilde X\to\hz$ the unique isometry mapping $p$ to $i$, such that the chosen frame is mapped to the standard frame $(\partial_x,\partial_y)$ in $T_i\hz$. The isometry $\Phi$ conjugates the action of $\Gamma$ on $\tX$ to an action of a subgroup $\rho(\Gamma)\subset \G$. To choose a spin structure on $X$ amounts to choosing a lift $\tGamma\subset \SL_2(\rz)$ of the group $\rho(\Gamma)$ under the canonical projection $\pi:\SL_2(\rz)\to\G$, i.e., such that $\pi:\tGamma\to\rho(\Gamma)$ is an isomorphism.

Recall that on negatively-curved compact manifolds, closed geodesics are in bijection with free homotopy classes of loops, which are canonically identified with conjugacy classes in the fundamental group. The function $\varepsilon$ from Definition \ref{defep} can therefore be interpreted as a class function (that is, constant on conjugacy classes) on $\Gamma$. 
\begin{remark}
One can give an alternate description of $\varepsilon$ from Definition \ref{defep} as follows: $\varepsilon(\gamma)\in\{\pm 1\}$ is the sign of the trace of $\pi^{-1}(\gamma)\in\tGamma\subset \SL_2(\rz)$.
\end{remark}
This definition also covers the case where $\gamma$ is parabolic. Since we assume $X$ to be smooth, the group $\Gamma$ does not contain elliptic elements, so the trace does not vanish.

\subsection{The spin modular group and the spin moduli space}
Diffeomorphisms of $X$ act by pull-back on the space of hyperbolic metrics and on the set of spin structures. By the homotopy invariance of the pull-back operation, the action of a diffeomorphism on the set of spin structures only depends on its isotopy class. When $X$ is a compact surface of genus $g$, we get in this way an action of the modular group $\MCG = \Diff^+(X)/\Diff^0(X)$ on the set of spin structures, a set of cardinality $2^{2g}$. Let 
$\MCG^\spin\mathrel{\unlhd}\MCG$ denote the normal subgroup fixing all spin structures on $X$. Its index in $\MCG$ is a divisor of $(2^{2g})!$, in particular it is finite. 

\begin{definition}
$\cM_g^\spin$ denotes the quotient of the Teichm\"uller space $\cT_g$ by the group $\MCG^\spin$.
\end{definition}
The set $\cM_g^\spin$ is a smooth orbifold, a branched cover of finite degree of the Riemann moduli space $\cM_g$. By definition, a smooth function on this space is the same as a $\MCG^\spin$-invariant smooth function on $\cT_g$. Since $\MCG$ preserves the K\"ahler structure on $\cT_g$, $M_g^\spin$ inherits a metric space structure with compact completion $\overline{M}_g^\spin$, whose boundary points correspond to finite-volume, complete hyperbolic surfaces. A pinching process defines in this way a path in $M_g^\spin$ towards its boundary.

Every smooth family $\{g_t\}_{t\in\cM}$ of hyperbolic metrics on $X$ indexed by some smooth manifold $\cM$, together with a fixed spin structure on $X$, defines a map $G:\cM\to M_g^\spin$ such that $g_t$ is isometric to $G(t)$, and the isometry is unique up to a diffeomorphism preserving the spin structure. In this sense, $\cM_g^\spin$ is a classifying space for families of hyperbolic structures on $X$ with a fixed spin structure.

\subsection{Generators of the fundamental group}\label{generatoripi1}
The fundamental group $\Gamma=\pi_1(X,p)$ of the compact oriented surface $X$ of genus $g$ admits a standard presentation by $2g$ generators and one relation:
\begin{equation}\label{genpi1}
 \pi_1(X, p) = \langle \alpha_1, \beta_1,... , \alpha_g, \beta_g : \ [\alpha_1,\beta_1]...[\alpha_g,\beta_g]=1   \rangle, 
\end{equation}
where $[\alpha,\beta]:=\alpha \beta\alpha^{-1} \beta^{-1} $ denotes the commutator of the elements $\alpha, \beta$ in the multiplicative group $\Gamma$. Every element $\eta \in \pi_1(X,p)$ can thus be written, albeit not uniquely, as a word in the $4g$ letters corresponding to the generators and their inverses.

\begin{definition}\label{defwordlength}
The \emph{word length} of an element $\eta \in \pi_1(X,p)$ is the minimum number of letters (generators and their inverses) needed to write $\eta$ in the above presentation of $\Gamma$. 

The word length of a conjugacy class $[\eta]$, denoted $\WL([\eta])$, is the minimum of all the word lengths of elements $\eta\in[\eta]\subset\pi_1(X,p)$.
\end{definition}

We consider a smooth family $(g_t)_{t\in \cM}$ of Riemannian metrics of curvature $-1$ on $X$ indexed by some manifold $\cM $. For instance,  $(g_t)_{t>0}$ could describe a pinching process as in Section \ref{pinchingproc}, but it could also be a section in the fibration of the set of hyperbolic metrics on $X$ over the Teichm\"uller space $\cT_g$. For each $t$, we denote by $\alpha_i^t$ and $\beta_i^t$ the unique geodesic loops based at $p$ (that generally close at an angle different from $\pi$ in $p$, so they are not closed geodesics) representing the homotopy classes $\alpha_i$ and $\beta_i$, respectively, in $\pi_1(X,p)$. For each fixed $t$, we cut the surface $X$ along these broken geodesics $\alpha_i^t, \beta_i^t$, $i=\overline{1,g}$ to get a geodesic $4g$-gon, and we consider all their lifts to $\hz$. We obtain a tiling of the hyperbolic plane $\mathbb{H}$ by congruent fundamental domains with $4g$ geodesic edges. As $t$ varies, the preimages of $p$ in $\mathbb{H}$ change smoothly, and the associated polygons deform smoothly as well.

\subsection{Smooth variations of the metric as families of representations}\label{famrep}
We can describe a smooth family of metrics $(g_t)_{t\in \cM}$ in an algebraic manner. Let $\tilde{g_t}$ be the lift of $g_t$ to the universal cover $\tilde{X}$ of $X$. If we fix $\tilde{p} \in \tilde{X}$ and an orthonormal frame at this point, then there exists an unique isometry $\Phi_t: \tilde{X} \longrightarrow \mathbb H$ which maps $\tilde{p}$ to $i$ and the fixed orthonormal frame to $\left\{ \frac{\partial}{\partial x} \big\vert_{i}, \frac{\partial}{\partial y} \big\vert_{i} \right\}$.

Let $\eta$ be an element in $\pi_1(X, p) \cong \Deck(\tilde{X},X)$, and notice that $\eta : \tilde{X} \longrightarrow \tilde{X}$ is an isometry for all the Riemannian metrics $\tilde{g_t}$. Then the map
\[ \rho_t(\eta):= \Phi_t \circ \eta \circ \Phi_t^{-1} : \mathbb H \longrightarrow \mathbb H \]
is also an isometry, thus an element in $\PSL_2(\mathbb R)$. Therefore our smooth variation of metrics $(g_t)_{t\in \cM}$ is encoded algebraically in a smooth family of faithful discrete representations 
\[ \cM\ni t\mapsto \rho_t : \pi_1(X, p) \longrightarrow \PSL_2 (\mathbb R),\]
defining a family of hyperbolic surfaces $\sqcup_{t\in \cM}(\rho_t(\pi_1(X,p)) \backslash \mathbb H)$ indexed by $\cM $.

\subsection{Trivializing the spinor bundle in time directions}\label{spinbundlesection}
 We use the method of Bourguignon and Gauduchon \cite{bgaud} to identify the frame bundles (and then the spinor bundles) of the metrics $g_t$ for different values of $t\in \cM$. Consider the generalized cylinder metric
\[  {G}:= \dd t^2 + g_t\]
on $\mathbb R_+ \times X$. The tangent bundles $(TX, g_{t_0})$ and $(TX,g_{t_1})$ for $t_0, t_1 >0$ are isometric via the parallel transport with respect to the metric ${G}$ along the vertical geodesics, i.e, the integral curves of $\partial_t$. This identifies the frame bundles $\PSO X_{t_0}$ and $\PSO X_{t_1}$ at times $t_0$ and $t_1$, inducing a canonical identification between the spin bundles $\Pspd X_{t_0}$ and $\Pspd X_{t_1}$. Moreover, outside $\gamma$, this identification also holds for $t_0=0$.

\subsection{Uniform length spectrum estimates} Denote by $l_m(t)$ the length of the $m^\text{th}$ smallest closed, oriented geodesic on $X$ with respect to the metric $g_t$, counted with multiplicity. A standard argument using a tiling of $\hz$ by Dirichlet fundamental domains and area comparisons shows that for any hyperbolic surface $X$ of genus $g$,
\[ l_m \geq \log m + \log (4g-4) - 3 \Diam X. \]
For every compact set $I\subset \cM$, $\Diam (X,g_t)_{t\in I}$ is uniformly bounded, so there exists $C=C(I)\in\rz$ such that for every $t\in I$,
\begin{equation}\label{m-alunggeod}
\begin{aligned}
l_m(t) \geq \log m -C
\end{aligned}
\end{equation}
(see for instance the proof of \cite[Lemma $9$]{rares}).

\subsection{Families of classical pseudodifferential operators}

\begin{definition}
A smooth family of pseudodifferential operators of order $k\in \cz$ on a closed manifold $X^n$ with parameters in a manifold $\cM $, acting between sections in vector bundles $\mathcal{S},\mathcal{S}'$ over $X$, is a distribution on $\cM\times X^2$ with classical conormal singularities of order $k$ at $\cM \times \Diag_X$, where $\Diag_X$ is the diagonal in $X\times X$. We denote the space of such families by
\[\mathcal C^{\infty} \lp \cM, \Psi^k(X)  \rp:=\mathcal I^k \lp \cM \times X^2, \cM \times \Diag_X \rp . \] 
\end{definition}
In order for $A_t$ to act on sections in $\mathcal S$, the distribution $A$ does not act on $C^\infty_c(\cM \times X^2)$, but rather on compactly-supported sections in the vector bundle $(\mathcal{S}')\boxtimes (\mathcal{S}^*\otimes \Omega^1(X))$. For simplicity we supress those bundles from the notation

The class of conormal distributions is stable under restriction to submanifolds transversal to the conormality locus. For every submanifold $\cM'\subset \cM$, we have thus a restriction map
$\mathcal C^{\infty} \lp \cM, \Psi^k(X)  \rp\to \mathcal C^{\infty} \lp \cM', \Psi^k(X)  \rp$. In particular for every $t\in \cM$ we have an evaluation map $\mathcal C^{\infty} \lp \cM,\Psi^k(X)  \rp\to \{t\}\times \Psi^k(X)$, so a smooth family $A\in C^{\infty} \lp \cM,\Psi^k(X) \rp$ defines a function $A:\cM \to\Psi^k(X)$. It is natural to denote by $A_t$ the evaluation of this function at $t\in \cM$.

We need the following extensions to families of standard results on pseudodifferential operators. We only sketch the proofs, they can also be seen as particular cases of the corresponding results for families of cusp-surgery operators from section \ref{cpcalc}.

\begin{lemma}[Composition]\label{composition}
Let $k_1,k_2\in\cz$ and $A_j \in \mathcal C^{\infty} \lp \cM, \Psi^{k_j}(X)\rp$, for $j=1,2$, two families of pseudodifferential operators indexed by some manifold $\cM $. Then
there exists a family $A\in \mathcal C^{\infty} \lp \cM, \Psi^{k_1+k_2}(X)\rp$ such that for all $t\in \cM$,
\[
A_t=(A_1)_t\circ (A_2)_t.
\]
\end{lemma}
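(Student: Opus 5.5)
The plan is to prove the lemma locally and assemble the pieces with a partition of unity, as in the fixed-parameter case, with $t$ treated as a smooth parameter throughout. Choose a finite cover of $X$ by coordinate charts that trivialize $\mathcal S,\mathcal S'$, and a cutoff $\phi\in C^\infty(X\times X)$ equal to $1$ near $\Diag_X$ and supported in $\bigcup_\alpha U_\alpha\times U_\alpha$. Write $A_j=A_j'+A_j''$, where $A_j'=\phi A_j$ and $A_j''=(1-\phi)A_j$. Since $A_j$ is conormal only at $\cM\times\Diag_X$, the family $A_j''$ is smooth on $\cM\times X^2$; it is a smooth family of smoothing operators, i.e.\ an element of $C^\infty(\cM\times X^2)$. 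Composition is bilinear, so it suffices to treat the four products separately.

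\emph{Products involving a smoothing factor.} Consider $A_1''\circ A_2$. Its kernel at $(t,x,z)$ is $\int_X A_1''(t,x,y)\,A_2(t,y,z)$, i.e.\ the pairing of the distribution $A_2(t,\cdot,z)$ with a smooth function that depends smoothly on $(t,x)$. To see that the result is smooth in all variables, use the fact that $A_2$, viewed as a distribution in $(y,z)$, depends smoothly on $t$ in the topology of $\mathcal D'$. This holds because conormal distributions with parameters have uniform seminorm bounds when $t$ ranges over compact sets, and $t$-derivatives of $A_2$ are again conormal of the same order. Differentiating under the pairing then gives smoothness. Equivalently, $(A_2)_t^*$ is a smooth family of continuous operators on $C^\infty(X)$, so the product lies in $C^\infty(\cM\times X^2)\subset\mathcal C^\infty(\cM,\Psi^{k_1+k_2}(X))$. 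The cases $A_1'\circ A_2''$ and $A_1''\circ A_2''$ are handled the same way.

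\emph{The local product $A_1'\circ A_2'$.} After refining the partition, each piece is supported in a single chart $U\times U\subset\rz^n\times\rz^n$. There, conormality with parameters means that
\[
A_j(t,x,y)=(2\pi)^{-n}\int e^{i(x-y)\xi}a_j(t,x,\xi)\,\dd\xi,
\]
where $a_j$ is a classical symbol of order $k_j$ in $\xi$ and smooth in $(t,x)$, with symbol estimates uniform for $t$ in compact sets. One can first reduce an amplitude $a(t,x,y,\xi)$ to a left symbol using the standard formula $\sum_\alpha \frac{1}{\alpha!}\partial_\xi^\alpha D_y^\alpha a|_{y=x}$. The composed kernel is then an oscillatory integral with symbol
\[
a(t,x,\xi)\sim\sum_\alpha \frac{1}{\alpha!}\,\partial_\xi^\alpha a_1(t,x,\xi)\,D_x^\alpha a_2(t,x,\xi).
\]
Each term of this expansion is classical of order $k_1+k_2-|\alpha|$ and smooth in $t$. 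A Borel summation with parameters produces a classical symbol $a$ whose remainder after $N$ terms is a symbol of order $k_1+k_2-N$, locally uniformly in $t$.

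The main obstacle is checking that the remainder in the stationary-phase/Taylor argument is a smooth family of smoothing operators, and not merely smoothing for each fixed $t$. I would handle this by redoing the usual integration-by-parts estimates with the constants tracked: every seminorm bound depends only on finitely many symbol seminorms of $a_1,a_2$ and their $t$-derivatives, and these are locally bounded in $t$. Applying $\partial_t^m$ commutes with the whole construction and yields the same type of estimates, so the remainder kernel is $C^\infty$ jointly in $t$ as well. The Borel summation must also be chosen with cutoffs independent of $t$, which is possible on compact sets of $\cM$ and then globally by a partition of unity on $\cM$. Summing all the contributions gives the required $A\in\mathcal C^\infty(\cM,\Psi^{k_1+k_2}(X))$ with $A_t=(A_1)_t\circ(A_2)_t$.
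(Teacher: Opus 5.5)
Your argument is correct, but it takes a genuinely different route from the paper. The paper works globally with Schwartz kernels on the fibred triple product $\cM\times X^3$. It writes the family of compositions as ${\pi_{13}}_*\bigl((\pi_{12}^*A_1)(\pi_{23}^*A_2)\bigr)$ and takes for granted that pull-back, product and push-forward of parametric conormal distributions give a distribution conormal of order $k_1+k_2$ at $\cM\times\Diag_X$. Its only real observation is that these three operations commute with restriction to the slices $\{t\}\times X^2$, which identifies the slices of the result with $(A_1)_t\circ(A_2)_t$.

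You instead localize with a partition of unity and dispose of the off-diagonal smoothing pieces through the mapping properties of the families on $C^\infty(X)$. You then run the classical Leibniz-type symbol expansion with Borel summation, tracking how every seminorm estimate depends on $t$.

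Your route is elementary and self-contained. It actually proves the conormality of the composed family, which the paper's sketch imports from the general pushforward/composition machinery, and it produces the symbol expansion explicitly. The paper's route is coordinate-free and reduces the parametric statement to the non-parametric one with a single commutation remark. It is also exactly the mechanism that carries over to the cusp-surgery calculus on blown-up triple spaces, which is why the paper presents these lemmas as special cases of that calculus.

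One small point. Smoothness of $t\mapsto (A_2)_t$ in $\mathcal D'(X^2)$ would not by itself give smoothness in $z$ of the kernel of $\bigl((1-\phi)A_1\bigr)\circ A_2$. Your second formulation does the work there: the transposed family acts smoothly on $C^\infty(X)$, with $t$-derivatives again families of the same order.
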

\begin{proof}
The composition $A_1 A_2$ is an iteration of pull-backs, product and push-forward of conormal distributions:
\[
A_1 A_2 = {\pi_{13}}_*\lp(\pi_{12}^*A_1)(\pi_{23}^*A_2)\rp
\]
These operations commute with restriction to $\{t=\mathrm{constant\}}$, i.e., with the evaluations maps, hence 
$(A_2A_2)_t=(A_1)_t\circ (A_2)_t.$
\end{proof}

\begin{lemma}[Spectral invariance]\label{spectralinv}
If $A$ is invertible in the sense that for any $t \in \cM$ the operator $A_t$ is invertible, then the family of inverses $(A_t^{-1})_{t\in \cM}$ belongs to $\mathcal C^{\infty} \lp \cM, \Psi^{-k}(X)  \rp$.
\end{lemma}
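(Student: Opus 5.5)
We plan to build the inverse family with a smooth parametrix and then correct it by a finite-rank smoothing term, working locally on $\cM$. Since membership in $\mathcal C^{\infty}(\cM,\Psi^{-k}(X))$ is a local condition in $t$, we fix $t_0\in\cM$ and work over a relatively compact neighbourhood $U$ of $t_0$. Invertibility of $A_t$ for each $t$, together with the fact that $A_t$ acts on a closed manifold, forces $A_t$ to be elliptic: otherwise it could not be Fredholm with a bounded inverse between the appropriate Sobolev spaces. So the principal symbol $\sigma_k(A)$ is a smooth family of invertible homogeneous symbols on $\cM\times T^*X\setminus 0$.

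\textbf{Step 1 (smooth parametrix).} We invert the principal symbol, quantize it, and iteratively improve it in the usual way. Each step uses symbol compositions in which $t$ enters only as a smooth parameter, and the conormal classes $\mathcal I^m(\cM\times X^2,\cM\times\Diag_X)$ are closed under this construction. An asymptotic (Borel) summation, done uniformly in $t$, then produces $B\in\mathcal C^{\infty}(\cM,\Psi^{-k}(X))$ with
\[
BA-\Id = R_1,\qquad AB-\Id=R_2,
\]
where $R_1,R_2\in\mathcal C^{\infty}(\cM,\Psi^{-\infty}(X))=\mathcal C^\infty(\cM\times X^2)$. In these identities the composition is understood fibrewise, by Lemma \ref{composition}.

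\textbf{Step 2 (exact inverse).} From $BA=\Id+R_1$ we get
\[
A_t^{-1}=B_t-R_{1,t}A_t^{-1}.
\]
Substituting $A_t^{-1}=B_t-A_t^{-1}R_{2,t}$ into the last factor gives
\[
A_t^{-1}=B_t-R_{1,t}B_t+R_{1,t}A_t^{-1}R_{2,t}.
\]
The first two terms lie in $\mathcal C^{\infty}(\cM,\Psi^{-k}(X))$ by Lemma \ref{composition}. The last term has Schwartz kernel
\[
(x,y)\mapsto \langle A_t^{-1}R_{2,t}(\cdot,y),\,R_{1,t}(x,\cdot)^{*}\rangle .
\]
It is therefore smooth on $\cM\times X^2$ provided $t\mapsto A_t^{-1}$ is smooth as a map into bounded operators $L^2\to L^2$.

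\textbf{Step 3 (main obstacle: smoothness of $t\mapsto A_t^{-1}$ in operator norm).} Fix an elliptic invertible $\Lambda\in\Psi^{k}(X)$, for instance a power of $1+\Delta$ composed with a unitary; it is independent of $t$. Then $A_t\Lambda^{-1}$ is a smooth family in $\Psi^0(X)$. A family in $\mathcal C^\infty(\cM,\Psi^0(X))$ is smooth as a map into $\mathcal L(L^2)$: its $t$-derivatives are again families in $\Psi^0$, and $\Psi^0$ is bounded on $L^2$ with norm controlled by finitely many symbol seminorms, uniformly in $t$ on compacts. Each $A_t\Lambda^{-1}$ is invertible, and inversion is smooth on the open set of invertibles in the Banach algebra $\mathcal L(L^2)$. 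Hence
\[
A_t^{-1}=\Lambda^{-1}(A_t\Lambda^{-1})^{-1}
\]
is smooth in $t$ with values in $\mathcal L(H^{s},H^{s+k})$, and in particular in $\mathcal L(L^2)$.

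Finally, differentiating the kernel formula of Step 2 under the pairing shows that $R_{1}A^{-1}R_{2}\in\mathcal C^\infty(\cM\times X^2)$. This gives $A^{-1}\in\mathcal C^{\infty}(\cM,\Psi^{-k}(X))$.
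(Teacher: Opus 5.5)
Your proof is correct, but it is organised differently from the paper's.

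\textbf{The paper's route.} The paper argues locally around a fixed $t_0$. It takes a smooth parametrix $B_t$ with $A_tB_t=1+R_t$ and adds a $t$-independent smoothing correction $U_{t_0}$, built from $A_{t_0}^{-1}$ and $R_{t_0}$, so that $C_t:=B_t+U_{t_0}$ satisfies $C_{t_0}=A_{t_0}^{-1}$. Then $A_tC_t=1+\tilde R_t$ with $\tilde R_{t_0}=0$. It inverts $1+\tilde R_t$ for $t$ near $t_0$ by a Neumann series converging in the $\mathcal C^m$ topologies of smoothing kernels, and writes $A_t^{-1}=C_t(1+\tilde R_t)^{-1}$.

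\textbf{Your route.} You use a two-sided parametrix and the identity $A^{-1}=B-R_1B+R_1A^{-1}R_2$. This reduces everything to the much weaker statement that $t\mapsto A_t^{-1}$ is smooth in operator norm. You obtain that from an order-reducing operator, uniform $L^2$-boundedness of $\Psi^0$ families, and smoothness of inversion on the open set of invertibles.

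\textbf{What each buys.} The paper's argument stays inside the algebra of smoothing kernels and only needs elliptic regularity at the single time $t_0$ (to know $U_{t_0}$ is smoothing). This is why it transfers with little change to the cusp-surgery calculus. The price is that one must check that the $t$-derivatives of the Neumann series also converge. Your argument is global in $t$ and makes the smoothness of the remainder transparent. It also yields the fixed-$t$ statement $A_t^{-1}\in\Psi^{-k}(X)$ as a by-product rather than using it. The price is the $L^2$ machinery: a uniform-in-$t$ Calder\'on--Vaillancourt bound and an order reduction.

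\textbf{Two minor adjustments.}
\begin{itemize}
\item For $\Re k<0$ the inverse is not bounded on $L^2$. There you should use smoothness of $t\mapsto A_t^{-1}$ in $\mathcal L(L^2,H^{\Re k})$ and pair in $H^{\Re k}\times H^{-\Re k}$. This is harmless because $R_1,R_2$ are smoothing. Also, you only establish smoothness into $\mathcal L(L^2,H^{k})$, not $\mathcal L(H^s,H^{s+k})$ for all $s$, but one pair of spaces suffices for your argument.
\item When $A_t$ maps sections of $\mathcal S$ to sections of $\mathcal S'$, you need neither a unitary nor a Banach algebra. Smoothness of inversion on the open set of invertibles in the Banach space of bounded operators from $L^2(\mathcal S)$ to $L^2(\mathcal S')$ suffices. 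With $\Lambda$ acting on $\mathcal S$ alone, no bundle isomorphism $\mathcal S\cong\mathcal S'$ is required.
\end{itemize}
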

\begin{proof}
Let $B_t \in \Psi^{-k}(X)$ be a parametrix of $A_t$, i.e., $A_tB_t = 1 + R_t$, where $R_t \in \Psi^{-\infty}(X)$. By construction, $B_t$ is a smooth family in $t$. Fix $t_0$ and let $U_{t_0}:=-R_{t_0}(A_{t_0})^{-1} \in \Psi^{-\infty}(X)$. Consider the smooth family $C_t:=B_t + U_{t_0}$ and notice that $C_{t_0}=B_{t_0}+U_{t_0}=(A_{t_0})^{-1}$. Then $A_t C_t= 1+ \tilde{R}_t$, where $\tilde{R}_t=  R_t + A_t U_{t_0} \in \Psi^{-\infty}(X) $ and $\tilde{R}_{t_0}=0$. It follows that for small enough $|t-t_0|$, $1+ \tilde{R}_t$ is invertible, and 
\[ ( 1+ \tilde{R}_t )^{-1} -1 = -\tilde{R}_t + \tilde{R}_t^2 - \tilde{R}_t^3 ...  \]
This series is convergent in the $\cC^m$ norms on $X^2$, so its sum is smooth in $t$. Since $(A_t)^{-1}=C_t (1+ \tilde{R}_t)^{-1}$, it follows by Lemma \ref{composition} that the family of inverses is also smooth in $t$, ending the proof.
\end{proof}

\begin{lemma}\label{tracesmooth}
Let $A\in\cC^\infty(\cM,\Psi^k(X))$ for some $k< -n$. Then $\Tr A \in \mathcal C^{\infty} (\cM )$, in the sense that the function $\cM \ni x \mapsto \Tr(A_x)$ is smooth. If $\cM=\cM'\times U$ and $A$ is holomorphic in $s\in U$, then $(x,s)\mapsto \Tr(A_{x,s})$ is also holomorphic in $s$.
\end{lemma}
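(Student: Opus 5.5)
The approach is to reduce the trace to an integral of the Schwartz kernel over the diagonal, and then to read off smoothness from the structure of conormal distributions of order below $-n$. By definition, $A$ is a distribution on $\cM\times X^2$ with classical conormal singularities of order $k$ along $\cM\times\Diag_X$. Locally near the diagonal, in coordinates $(x,y)$ on $X^2$ and $t$ on $\cM$, we can write it as an oscillatory integral
\[
A(t,x,y)=\int_{\rz^n} e^{i(x-y)\cdot\xi}\,a(t,x,\xi)\,\dd\xi,
\]
modulo a smooth kernel, with $a$ a classical symbol of order $k$ depending smoothly on $t$. Since $k<-n$, we have $|\partial_t^\alpha\partial_x^\beta a(t,x,\xi)|\le C(1+|\xi|)^{\Re k}$, uniformly for $t$ in compact sets, and this bound is integrable in $\xi$. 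Differentiation under the integral sign then shows that the kernel is continuous on $\cM\times X^2$, and that all its $t$-derivatives are continuous as well. Equivalently, $\partial_t^\alpha A$ is again a family in $\mathcal C^\infty(\cM,\Psi^k(X))$, because vector fields tangent to the conormality locus preserve conormal distributions and their order.

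We conclude that the restriction to the diagonal, $(t,x)\mapsto A(t,x,x)$, is $\mathcal C^\infty$ in $t$ with values in continuous densities on $X$. The trace is given by Lidskii's formula. Alternatively, one can factor $A_t=B_tC_t$ with Hilbert–Schmidt families, using $(1+\Delta)^{\pm m}$ and Lemma \ref{composition}. Either way,
\[
\Tr(A_t)=\int_X \tr\bigl(A(t,x,x)\bigr),
\]
where $\tr$ is the fiberwise trace on $\End(\mathcal S)$, and the integrand is a density through the $\Omega^1$ factor. Since $X$ is compact, we may differentiate under the integral in $t$, so $\Tr A\in\mathcal C^\infty(\cM)$.

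For the holomorphic statement, suppose $\cM=\cM'\times U$ with $A$ holomorphic in $s\in U$. Then $\bar\partial_s$ commutes with restriction to the diagonal and with integration over $X$, so $\bar\partial_s\Tr(A_{x,s})=\Tr(\bar\partial_sA_{x,s})=0$. Joint smoothness in $(x,s)$ is already given by the first part.

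The main point to check is the uniform symbol estimate in the parameter: every $t$-derivative of the full symbol must remain of order $k<-n$, locally uniformly in $t$. This is exactly what conormality on the product $\cM\times X^2$ with respect to $\cM\times\Diag_X$ provides. The smooth remainder away from the diagonal is trivially harmless.
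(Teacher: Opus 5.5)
Your proof is correct, but it is organized differently from the paper's. The paper gives no independent argument for this lemma: it calls it a special case of Proposition~\ref{tracecp}, obtained by restricting to some $t_0>0$. The proof of that proposition writes the restriction of the kernel to the lifted diagonal as the $\xi$-integral of a classical symbol of order $<-2$. It then obtains the trace as the push-forward of the resulting density along the $b$-fibration $\cM\times\Delta\to\cM\times[0,\infty)$, using Melrose's Pushforward Theorem. Away from $t=0$ that push-forward is just integration over the compact fibre $X$, so the underlying mechanism is the same as yours. What you do differently is carry it out by hand: uniform symbol bounds in the parameter, then differentiation under the $\xi$-integral and under $\int_X$, with no cusp-surgery machinery. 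Your route is self-contained. It applies directly to a closed manifold of any dimension $n$ and any parameter manifold $\cM$, whereas Proposition~\ref{tracecp} is stated for the surface with $m<-2$, so the paper's reduction only literally covers $n=2$. Your route also justifies the identity $\Tr A_t=\int_X \tr A(t,x,x)$, which the paper takes for granted. The paper's route buys economy and uniformity with the degenerate case, where the Pushforward Theorem is genuinely needed to produce the $t^{-\alpha}$, $t^{-\beta}$ and $\log t$ terms. Two small points of wording. First, Lidskii's theorem equates the trace with the sum of eigenvalues; the fact you actually need is the diagonal-integral formula for trace-class operators with continuous kernels, or your Hilbert--Schmidt factorization. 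Second, in that factorization both factors must have order $<-n/2$, which for $k$ close to $-n$ may force non-integer powers of $1+\Delta$. This is harmless, since those powers are still pseudodifferential.
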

\begin{proof}
This is a particular case of Proposition \ref{tracecp}, obtained by restricting to some $t_0>0$.
\end{proof}

\begin{lemma}\label{integr1var}
Let $\cM:=\cM_1 \times \cM_2$ be a product manifold endowed with a fixed volume form $\dd x_2$ on $\cM _2$. Then for every compact $I \subset \cM_2$ and $A\in \mathcal C^{\infty} \lp \cM, \Psi^{k}(X)\rp$,
\[x_1\mapsto \int_{I} A(x_1,x_2) \dd x_2 \in \mathcal C^{\infty} \lp \cM_1, \Psi^{k}(X) \rp.\] 
\end{lemma}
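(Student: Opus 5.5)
The plan is to work directly with the definition of families of pseudodifferential operators as conormal distributions. Recall that
\[
A\in \mathcal C^{\infty}(\cM,\Psi^{k}(X))=\mathcal I^k(\cM\times X^2,\cM\times \Diag_X).
\]
The map $x_1\mapsto \int_I A(x_1,x_2)\,\dd x_2$ is, at the level of Schwartz kernels, a push-forward along the projection
\[
\pi:\cM_1\times \cM_2\times X^2\to \cM_1\times X^2
\]
of the distribution $\chi_I A\,\dd x_2$, where $\chi_I$ is the characteristic function of $I$. The first step is to replace $\chi_I$ by something smooth. Choose a compact neighbourhood $I'$ of $I$ and a cut-off $\phi\in C^\infty_c(\cM_2)$ equal to $1$ near $I'$. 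Since $I$ is compact, $\chi_I=\chi_I\phi$, so everything is localized in the $\cM_2$-direction. The product $\phi A$ is still conormal, and the kernel under consideration is $\pi_*(\chi_I \phi A\,\dd x_2)$.

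The second step is a local analysis. Work in local coordinates $z$ on $X$ and write $A$ near $\cM\times\Diag_X$ as an oscillatory integral
\[
A(x_1,x_2,z,z')=\int e^{i(z-z')\xi}a(x_1,x_2,z,\xi)\,\dd\xi,
\]
with $a$ a classical symbol of order $k$ depending smoothly on the parameters $(x_1,x_2)$. This form is available because the conormality locus is $\cM\times \Diag_X$, so $\cM$ enters only as smooth parameters. Integrating in $x_2$ over $I$ gives the amplitude
\[
b(x_1,z,\xi):=\int_I a(x_1,x_2,z,\xi)\,\dd x_2 .
\]
Symbol estimates $|\partial^\alpha_{x_1,z}\partial_\xi^\beta a|\le C(1+|\xi|)^{\Re k-|\beta|}$ hold uniformly for $x_2$ in the compact set $\operatorname{supp}\phi$. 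One can differentiate under the integral, so $b$ satisfies the same estimates; discontinuity of $\chi_I$ in $x_2$ is harmless because no $x_2$-derivatives are ever taken. Classicality is preserved as well: the homogeneous terms $a_{k-j}$ integrate to homogeneous terms $b_{k-j}=\int_I a_{k-j}\,\dd x_2$, and the remainder after $N$ terms integrates to a symbol of order $k-N$. Hence $b$ is a classical symbol of order $k$ in $(z,\xi)$ with smooth dependence on $x_1$.

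The third step handles the rest of the kernel. Off the conormal locus, $A$ is smooth on $\cM\times X^2$ minus $\cM\times\Diag$, and integration over the compact $I$ of a smooth function (with respect to $x_2$) with all $x_1,z,z'$ derivatives bounded uniformly yields a smooth function of $(x_1,z,z')$. Patching the local pieces with a partition of unity on $X^2$ shows that the integrated kernel lies in $\mathcal I^k(\cM_1\times X^2,\cM_1\times\Diag_X)$.

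The main point to be careful about is the uniformity of the symbol estimates over $I$, which is exactly where the compactness of $I$ enters, together with the observation that $\chi_I$ only multiplies in the parameter variable $x_2$, which is transversal to the conormality. An alternative, cleaner route would be to note that restriction to $\{x_2\}$ commutes with the local representation, and then approximate $\int_I$ by Riemann sums, which are finite sums of restrictions. However, the convergence in the conormal topology would then need to be verified, so the direct symbol argument seems preferable.
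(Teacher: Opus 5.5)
Your proof is correct and follows essentially the same route as the paper. You separate the smooth part of the kernel away from the diagonal, write the near-diagonal part as the inverse Fourier transform of a classical symbol with parameters $(x_1,x_2)$, and use that integration in $x_2$ over the compact set $I$ commutes with derivatives in $x_1$, in the base variables and in $\xi$, and with the Fourier transform, so it again produces a classical symbol of order $k$ depending smoothly on $x_1$. The only cosmetic difference is that you work in local coordinates with a partition of unity on $X^2$, whereas the paper uses one global cutoff near the diagonal and identifies a tubular neighbourhood of the diagonal with a neighbourhood of the zero section in $TX$.
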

\begin{proof}
Fix a bump function $\chi\in C^\infty(X^2)$ equal to $1$ near $\Diag_X$ and supported in a tubular neighborhood $U$ of $\Diag_X$. The distribution $A$ is smooth outside $\cM \times\Diag_X$ so integrating $(1-\chi) A$ with respect to $\dd x_2$ along $I$ is smooth on $\cM _1\times X^2$. The tubular neighborhood $U$ is identified with a neighborhood of the tangent space $TX$, and by definition $\chi A$ is the fiberwise Fourier transform of a classical symbol of order $k$, say $\sigma(\chi A)(x_1,x_2,p,\xi)$. Derivatives in $x_1,p,\xi$ commute with integration in $x_2$, so the integral in $x_2$ of such a symbol $\sigma(\chi A)$ is again a classical symbol in $(x_1,p,\xi)$ of order $k$ in $\xi$. Integration in $x_2$ also commutes with Fourier transform in $\xi$, so finally we deduce that, up to a smooth distribution, $\int_{I} A(x_1,x_2) \dd x_2$ is the inverse Fourier transform of a classical symbol.
\end{proof}

\section{Key tools}

\subsection{The spin Selberg zeta function}\label{selbergderlog}
The analytic extension of the Selberg zeta function depends crucially on the trace formula. For the spin Dirac operator on a compact surface, such a trace formula was first described explicitly in \cite{BolteStiepanSelbergForDirac}.
In the smooth cofinite setting, additional terms corresponding to the continuous spectrum of the Laplacian appear in the spectral side of the trace formula, as already noticed in Selberg's original paper \cite{selberg}. For the Dirac operator on hyperbolic manifolds of finite volume, Park \cite{park} defined an odd zeta function and proved its meromorphic extension. The poles in this setting arise from the scattering matrix. In \cite{GMP} a Selberg zeta function of odd type is constructed for the spinor bundle on an odd-dimensional geometrically finite hyperbolic manifold of infinite volume and without cusps. The study of its meromorphic continuation and singularities leads to a natural eta invariant of the Dirac operator, which is then expressed in terms of the central value of the corresponding Selberg zeta function at $s=0$.

Recently, using an adapted Selberg trace formula \cite[Theorem $13$]{rares} for the spin Dirac operator $\Dd_0$ on a finite-volume hyperbolic surface $X_0$ with $k\geq 0$ cusps and admissible spin structure encoded by the class function $\varepsilon$, Stan obtained the following identity for the Selberg zeta function \eqref{szf} corresponding to $\varepsilon$, valid for $\Re s_0 > 1$ and $s\in\cz$ such that neither $(s-\frac{1}{2})^2$ nor $(s_0-\frac{1}{2})^2$ are eigenvalues of $D^2$ (see \cite[Equation $(17)$]{rares}):
\begin{equation}\label{selcomp1}
\begin{aligned}
\left(\frac{Z'}{Z}\right)(s) ={}& \frac{2s-1}{2s_0-1}   \left(\frac{Z'}{Z}\right)(s_0)  \\
{}&+ \frac{2s-1}{2} \Tr \left[  \lp \Dd_0^2 + \lp s-\tfrac{1}{2} \rp^2 \rp^{-1}   -   \lp \Dd_0^2 + \lp s_0-\tfrac{1}{2} \rp^2 \rp^{-1}   \right]  \\
{}& - \frac{\Area X}{ 4 \pi} \int_{\mathbb R} \xi \coth (\pi \xi) \lp \frac{2s-1}{\xi^2 + (s-\tfrac{1}{2})^2 }  -  \frac{2s-1}{\xi^2 + (s_0-\tfrac{1}{2})^2 }     \rp d\xi\\
{}&+k\log 2\left(1-\frac{2s-1}{2s_0-1}\right).
\end{aligned}
\end{equation}
Since the right-hand of equation \eqref{selcomp1} is meromorphic for $s\in \mathbb C$, the logarithmic derivative of the Selberg zeta function extends meromorphically to $\mathbb C$ (see \cite[Theorem 29]{rares}). The residues turn out to be positive integers, proving the \emph{holomorphic} extension of $Z$ to the complex plane. The absence of poles of $Z$ reflects the fact that the spectrum of $\Dd$ on the finite-volume surface is discrete, with our admissible choice of spin structure. 

Remark that we wrote the second term in \cite[Equation $(17)$]{rares} as half the trace of the relative resolvent of the squared Dirac operator $\mathrm{D}^2$, based on the well-known fact that the spectrum of $\Dd^2$ consists of two copies of the spectrum of $\mathrm{D}^-\mathrm{D}^+$, denoted in \cite{rares} by the sequence 
$(\xi_j^2)_{j \in \mathbb N}$. 

Equation \eqref{selcomp1} will be our essential tool in proving the smoothness of the Selberg zeta function for $\mathcal{C}^{\infty}$ variations of the hyperbolic metric on a compact surface, and also in its analysis during a pinching process towards $t=0$.

\subsection{The cusp-surgery calculus}\label{cpcalc}
In \cite{cipriana}, one of us proved that under the non-triviality hypothesis on the spin structure, the spectrum of the Dirac operator is smooth in the pinching process up to $t=0$. The core technique was the construction of a customized pseudodifferential calculus called the cusp-surgery calculus $\Psi^{*,*,*}_{\cp}(X)$, a particular case of the fibered cusp calculus with parameter of Albin-Rochon-Sher \cite{ars}. 
Through this machinery, we managed to ``glue" the family of Dirac operators $\Dd_t$ corresponding to the compact surfaces $(X,g_t)_{t > 0}$ and the limit Dirac operator $\Dd_0$ on the non-compact surface $(X \setminus \gamma,g_0)$ as a cusp-surgery operator
\begin{equation}\label{diracincalc}
    \Dd \in \Psi^{1,1,0}_{\cp}(X).   
\end{equation} 

The smoothness of the spectrum was established in  \cite[Theorem 2]{cipriana} as follows. Let $\lambda_0 \in \Spec \Dd_0$ and $\epsilon>0$ such that $[\lambda_0-\epsilon, \lambda_0 +\epsilon] \cap \Spec \Dd_0 = \{ \lambda_0 \}$. The family of spectral projectors satisfies
\begin{equation}\label{prspectralicalcul}
 \PP^t_{ [\lambda_0-\epsilon, \lambda_0 +\epsilon]} \in \Psi^{-\infty,-\infty,0}_{\cp}(X)  ,
\end{equation} 
which means precisely that there exists $t_1=t_1(\lambda_0)>0$ so that the family of Schwartz kernels corresponding to the spectral projectors $\PP^t_{ [\lambda_0-\epsilon, \lambda_0 +\epsilon]}$ for $t \in [0, t_1]$ is smooth on $[0, t_1) \times X \times X$ and vanishes rapidly at $\{ t=0 \} \times \gamma \times \gamma$.

Furthermore, in \cite[Theorem 1]{cipriana}, we proved that if $\lambda \in \mathbb R \setminus \Spec \Dd_0$, then there exists $t_0 (\lambda) >0$ such that the operators $(\Dd_t - \lambda)$ are invertible for every $ 0 \leq t \leq t_0 (\lambda)$, and the resolvents $(\Dd_t-\lambda)^{-1}$ are the evaluation at time $t$ of a cusp-surgery operator
\begin{equation}\label{rezolventecalcul}
(\Dd-\lambda)^{-1} \in \Psi^{-1,-1,0}_{\cp}(X). 
\end{equation}

We now briefly recall the construction of the cusp-surgery calculus. We emphasize that the theory is intricate, our aim in this section is only to recall the main ingredients of the construction. We refer the reader to \cite{cipriana}, and for the more general case of $\varphi$-surgery calculus to \cite{ars} for full details.

The \emph{cusp-surgery simple space} $\Xs$ is obtained by the blow-up in the sense of Melrose \cite{melrose} of the pinched curve $\gamma \times \{ 0 \}$ inside $X \times [0, \infty)$. 

Now consider the manifold with corners $X^2_{\text{b}}$, the double space of McDonald \cite{mcdonald}, obtained by blowing up the submanifold $\{t=0 \} \times \gamma_{(1)} \times \gamma_{(2)}$  inside $ [0, \infty) \times X^2$. Denote by $\ffb$ the exceptional divisor, i.e., the newly added face, and let $\tb$ be the lift of the temporal boundary $\{ t=0 \}$ by the blow-down map. 
Consider next the \emph{cusp-surgery double space} $\Xd$ defined in \cite[(5)]{cipriana} as
\begin{equation}\label{spdublu}
\left[ [0, \infty) \times X^2 ; \{ 0 \} \times \gamma_{(1)} \times \gamma_{(2)}; \{0\} \times \gamma_{(1)} ; \{0 \} \times \gamma_{(2)} ;  \lp [0, \infty) \times \Diag  \rp \cap \ffb  \right] ,
\end{equation}
where $\Diag$ is the diagonal inside $\lp X \setminus \gamma \rp^2$, and we re-denoted by $\ffb$ the lift of the face $\ffb$ from $X^2_{\text{b}}$ through the two blow-ups of the $p$-submanifolds $\{t=0\} \times \gamma_{(1)}$ and $\{t=0 \} \times \gamma_{(2)}$. Let $\ffc$ be the \emph{cusp front face} obtained through the last blow-up in \eqref{spdublu} and notice that $\ffc$ is a line bundle over $[0, \pi] \times \gamma \times \gamma$. Denote by $\tilde{\beta}: \Xd \longrightarrow [0, \infty) \times X \times X$ the blow-down map and let $\pi : X\times X \times \mathbb R_+ \longrightarrow X_{(2)}$ be the projection onto the second factor $X$. Recall from \cite[Proposition 37]{cipriana} that there exists a ``projection" (obtained by composing a true projection with a blow-down map) $\pi^2_{(1)}:\Xd \longrightarrow \Xs$ which is a \emph{b}-fibration, where the subscript in $\Xs_{(1)}$ indicates that the projection is on the simple space in the first set of variables. Of course, the corresponding result also holds true for $\pi^2_{(2)}:\Xd \longrightarrow \Xs_{(2)}$.

Let $\Ss$ be the spinorial bundle over the simple space $\Xs$ obtained using the Bourguignon-Gauduchon trick \cite{bgaud} (see Section \ref{spinbundlesection} for details on $t>0$ and \cite[Section 21]{cipriana} for more details up to $t=0$). Denote by
\[ \Ss \boxtimes \Ss^*:= \lp \pi^2_{(1)} \rp^* \Ss \otimes   \lp \pi^2_{(2)} \rp^* \Ss^*   \]
the tensor product bundle over $\Xd$ of the pull-backs of $\Ss$ and $\Ss^*$ through $\pi^2_{(1)}$ and  $\pi^2_{(2)}$, respectively. 

In order to describe the conormality ``plane", let  $\Diag:= \{ (p,p): \ p\in X \}$ and consider
\begin{equation}\label{planuldecon}
 \Delta:=  \overline{ \tilde{\beta}^{-1} \lp {\Diag} \times (0, \infty)  \rp }  \subset \Xd.
\end{equation}
Let $\Omega(X)$ be the $1$-density bundle over $X$. The space
\[ \II^{m,0,0} (\Xd):= \II^{m,0,0} \lp \Xd, \Delta , \Ss \boxtimes \Ss^* \otimes \lp  \pi_2 \circ \tilde{\beta}\rp^* \Omega \lp X \rp    \rp \]
consists of those distributions of order $m$ on $\Xd$ that
\begin{itemize}
\item[$i)$] are conormal of order $m$ to the lifted diagonal $\Delta$ defined in \eqref{planuldecon};
\item[$ii)$] are extendible across the front face $\ffc$ and the temporal boundary $\tb$ (i.e. have index sets $\mathbb N$ towards these faces);
\item[$iii)$] vanish at infinite order towards all the other faces (they have empty index sets there);
\item[$iv)$] contain a density factor pulled back from the second set of variables.
\end{itemize} 
Finally, in order to include distributions of arbitrary order of vanishing (or growth) at $\ffc$ and $\tb$, set $\II^{m, \alpha,\beta} \lp \Xd \rp := \rho_{\ffc}^{\alpha} \rho_{\tb}^{\beta}  \II^{m,0,0} (\Xd). $ 

\begin{definition}
The operators of order $(m,\alpha,\beta)$ in the \emph{cusp-surgery calculus} are defined as
\[\Psi_{\cp}^{m, \alpha, \beta} (X):=\rho_{\ffc}^{- \alpha} \rho_{\tb}^{-\beta} \Psi^{m,0,0}_{\cp}(\Xd) = \II^{m,-(\alpha+2),-\beta}(\Xd). \]
\end{definition}
They can be thought of as families of classical pseudodifferential operators on $X$ which depend smoothly on the time-parameter $t$ for $t>0$, and satisfy certain precise asymptotic properties towards a cusp pseudodifferential operator at $\{t=0\}$.

Remark that $\Psi_{\cp}^{*,*,*}(X)$  differs from $\II^{*,*,*} (\Xd)$ by a shift of $-2$ in the second index, essentially due to the density factor arising from the second set of variables.

\begin{definition}
Let $\cM$ be a smooth manifold. We say that $A=(A_x)_{x \in \cM}$ is a \emph{smooth family} of cusp-surgery operators indexed by $\cM$ if it belongs to the space of classical conormal distributions
\[  \mathcal C^{\infty} \lp \cM, \Psi^{*,*,*}_{\cp}(X) \rp := \II \lp \cM \times \Xd , \cM \times \Delta \rp.  \]
For simplicity we omit from the notation the bundles and densities. Furthermore, when $\cM=\cM'\times U$ for some complex manifold $U$, we say that the family $A$ is \emph{holomorphic in $s\in U$} if it satisfies the Cauchy-Riemann equations in $s\in U$. 
\end{definition}
In the sequel, we will use holomorphic families indexed by  an open subset $U$ of $\cz$.

\begin{lemma}
If $(A_x)_{x \in \cM} \in \mathcal C^{\infty} \lp \cM, \Psi^{m,\alpha,\beta}_{\cp}(X) \rp$ and $(B_x)_{x \in \cM} \in \mathcal C^{\infty} \lp \cM, \Psi^{m',\alpha',\beta'}_{\cp}(X) \rp$ are smooth families of cusp-surgery operators, then their composition $\lp A_x B_x \rp_{x \in \cM}$ is a smooth family of $\Psi^{m+m',\alpha+\alpha',\beta+\beta'}_{\cp}(X)$ operators. Moreover if $A$ and $B$ are holomorphic in $s\in U$, so is $AB$.
\end{lemma}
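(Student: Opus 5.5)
The plan is to reduce to the single-operator composition theorem for the cusp-surgery calculus, $\Psi_{\cp}^{m,\alpha,\beta}(X)\circ\Psi_{\cp}^{m',\alpha',\beta'}(X)\subset\Psi_{\cp}^{m+m',\alpha+\alpha',\beta+\beta'}(X)$, proved in \cite{cipriana} (as a special case of \cite{ars}). The new ingredient is that the extra parameters $x\in\cM$ are inert. As in Lemma \ref{composition}, we write the composition as
\[
AB=(\pi^3_{(13)})_*\bigl((\pi^3_{(12)})^*A\cdot(\pi^3_{(23)})^*B\bigr),
\]
now on $\cM\times X^3_{\cp}$. Here $X^3_{\cp}$ is the cusp-surgery triple space used in \cite{cipriana}, and the $\pi^3_{(ij)}:X^3_{\cp}\to\Xd$ are the $b$-fibrations constructed there. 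Tensoring each of these maps with the identity on $\cM$ gives maps $\Id_\cM\times\pi^3_{(ij)}$. These are still $b$-fibrations, with the same exponent matrices at the boundary faces, and they remain transversal to the lifted diagonals. This is because $\cM$ has no boundary and no blow-ups involve the $\cM$ factor.

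The key steps, in order, are the following.

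First, the pull-back step. By transversality of $\Id_\cM\times\pi^3_{(12)}$ and $\Id_\cM\times\pi^3_{(23)}$ to $\cM\times\Delta$, the pull-backs $(\pi^3_{(12)})^*A$ and $(\pi^3_{(23)})^*B$ are conormal to the lifted partial diagonals in $\cM\times X^3_{\cp}$. Their index sets at each face are obtained from those of $A,B$ by the usual rule for pull-back under $b$-maps.

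Second, the product step. The two partial diagonals intersect transversally, so the product is conormal to their union. It is a classical (polyhomogeneous) conormal distribution, and at each boundary hypersurface its index set is the sum of the index sets of the factors.

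Third, the push-forward step, using Melrose's push-forward theorem for $b$-fibrations. We apply it to $\Id_\cM\times\pi^3_{(13)}$. The fibers are transversal to the total diagonal, which descends to $\cM\times\Delta$. The theorem gives a conormal distribution of order $m+m'$ on $\cM\times\Xd$, and its index sets are computed face by face exactly as in the $\cM$-free case of \cite{cipriana}. This yields the orders $\alpha+\alpha'$ at $\ffc$ and $\beta+\beta'$ at $\tb$, together with rapid vanishing at the remaining faces. Both the integrability condition and the density bookkeeping (the $-2$ shift) are identical to the parameter-free case, since $\cM$ carries no density.

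Finally, holomorphy in $s\in U$. The operations of pull-back, multiplication, and push-forward (fiber integration) commute with $\partial_{\bar s}$. Differentiation under the integral is justified by the uniform conormal bounds on compact subsets of $\cM$. Hence $\partial_{\bar s}(AB)=(\partial_{\bar s}A)B+A(\partial_{\bar s}B)=0$. The same commutation with evaluation at $x\in\cM$ shows that $(AB)_x=A_xB_x$.

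The main obstacle is the push-forward step. There one must check that the parametrized triple space $\cM\times X^3_{\cp}$ still satisfies the hypotheses of the push-forward theorem and that no new index-set contributions arise. My intended route is to avoid redoing \cite{cipriana}. I would observe that every local model there is a product of a local model in $\cM$ (an open ball with no boundary) with the unparametrized one. Conormal estimates of the required type are therefore equivalent to the unparametrized ones with smooth dependence on $x$, in the sense of seminorms locally uniform in $x$ together with all $x$-derivatives. Each step of the parameter-free proof then goes through with $x$-derivatives carried along.
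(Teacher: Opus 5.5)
Your proposal is correct and takes essentially the same route as the paper. The paper's proof invokes the composition theorem of \cite{cipriana}, built on the triple space and Melrose's pull-back and push-forward theorems, with the inert parameter $x\in\cM$ carried along, and it deduces holomorphy from the bilinearity of the composition, which is exactly your Leibniz argument $\partial_{\bar s}(AB)=(\partial_{\bar s}A)B+A(\partial_{\bar s}B)=0$; your version simply spells out in more detail why taking the product with the boundaryless factor $\cM$ preserves the $b$-fibration, transversality and index-set bookkeeping.
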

\begin{proof}
In \cite[Theorem 47]{cipriana}, we proved the composition theorem for cusp-surgery operators using adapted double and triple cusp-surgery spaces, together with the Pullback and Pushforward Theorems of Melrose (see e.g. \cite{melrose}, \cite{grieser}). Here the conclusion follows by applying the same arguments, keeping in mind that this time there is an additional parameter $x \in \cM$. The composition is bilinear in $(A,B)$, so $\frac{\partial (AB)}{\partial \overline{s}}$ vanishes whenever both $A$ and $B$ are holomorphic in $s\in U$.
\end{proof}

\begin{notation}\label{notatieco}
Let $\Omega \subset \mathbb C$ be an open set. For $m\in \nz^*\cup\{\infty\}$, we denote by $\mathcal{C}^m(\cM , \Hol(\Omega))$ the space of  $\cC^m$ functions on $\cM \times \Omega$ which are holomorphic in $s \in \Omega$, i.e., in the kernel of the Cauchy-Riemann operator ${\partial}_{\overline{s}}$.
\end{notation}

In order to control the second term in the trace formula \eqref{selcomp1}, we shall need a families version of \cite[Proposition 4]{cipriana}. Recall that for $m<-2$, classical pseudodifferential operators of order $m$ on the closed surface $X$ are trace class. For $A\in \Psi^{m,\alpha,\beta}_{\cp}(X)$, the evaluation at $t=0$, makes sense for $\beta\leq 0$. It is a cusp pseudifferential operator of order $(m,\alpha)$ which is trace class when $m<-2$ and $\alpha<0$. 
For a smooth family $A\in\cC^\infty(\cM,\Psi^{m,\alpha,\beta}_{\cp})$ with $m<-2$, $\alpha<0$ and $\beta\leq 0$,  
each operator $(A_{t,x})_{t\geq 0,x\in\cM}$ is therefore trace class, so it is natural to define $\cpTr A:[0,\infty)\times\cM\to \cz$ as the family of those traces, i.e., as the function $(t,x)\mapsto \Tr(A_{t,x})$. 

\begin{proposition}\label{tracecp}
Let $A \in \cC^\infty \lp \cM, \Psi^{m,\alpha,\beta}_{\cp}(X) \rp$ a smooth family of cusp-surgery pseudodifferential operators with $m<-2$ and $\alpha,\beta\leq 0$. 
\begin{itemize}
    \item If $\alpha - \beta \notin \mathbb{Z}$, then
\[ \cpTr A \in t^{-\alpha} \cC^\infty \lp [0,\infty)\times\cM\rp + t^{-\beta} C^\infty \lp [0,\infty)\times \cM \rp. \]
\item If $\alpha - \beta \in \mathbb{Z}$, then
\[ \cpTr A \in
t^{\min(-\alpha,-\beta)} C^\infty \lp [0,\infty)\times \cM  \rp
+ t^{\max(-\alpha,-\beta)} \log t \cdot C^\infty \lp [0,\infty)\times \cM \rp. \]
\end{itemize}
Moreover, if $\cM=\cM'\times U$ and $A$ is holomorphic in $s\in U$, then so is the trace.
\end{proposition}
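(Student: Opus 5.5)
The trace is the integral of the pointwise trace of the Schwartz kernel along the diagonal, so I will write $\cpTr A$ as a push-forward of the restriction of $A$ to the lifted diagonal $\Delta$ and then apply Melrose's push-forward theorem. Since $m<-2$, the kernel of each $A_{t,x}$ is continuous across the diagonal, with a density factor. Restriction to $\cM\times\Delta$ is therefore well defined and yields a smooth (polyhomogeneous) section over $\cM\times\Delta$. Here $\Delta$ is identified with the simple space $\Xs$ via $\pi^2_{(1)}$ restricted to $\Delta$, which is a diffeomorphism onto $\Xs$ (the last blow-up in \eqref{spdublu} resolves exactly the diagonal inside $\ffb$). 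Taking the pointwise fiber trace $\Ss\otimes\Ss^*\to\cz$ gives a density-valued function $a$ on $\cM\times\Xs$. Tracking the definition $\Psi^{m,\alpha,\beta}_{\cp}=\II^{m,-(\alpha+2),-\beta}$, and the way the density $\Omega(X)$ lifts to $\Xs$ (its pullback to the blown-up face of $\Xs$ carries a factor $\rho_{\ff}^{2}$ relative to a b-density, which is what compensates the shift by $2$), the integrand is $\rho_{\ff}^{-\alpha}\rho_{\tb}^{-\beta}$ times a smooth function times a b-density on $\Xs$. Here $\ff$ is the front face of $\Xs$, coming from $\ffc\cap\Delta$, and $\tb$ is the temporal face.

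The trace is then $(\pi_t)_*a$, where $\pi_t:\cM\times\Xs\to\cM\times[0,\infty)$ is the composition of the blow-down with the projection onto $(x,t)$. This map is a b-fibration. Both boundary hypersurfaces $\ff$ and $\tb$ map to $\{t=0\}$, and since $t=\rho_{\ff}\rho_{\tb}$ up to a smooth positive factor, the exponent matrix is $1$ for both. The push-forward theorem then gives index set $(\mathbb N-\alpha)\,\overline{\cup}\,(\mathbb N-\beta)$ at $t=0$. For $\alpha-\beta\notin\zz$ this is the disjoint union, producing $t^{-\alpha}C^\infty+t^{-\beta}C^\infty$. When $\alpha-\beta\in\zz$ the extended union creates a single log term at exponents $\geq\max(-\alpha,-\beta)$, giving the second case.

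Integrability is automatic: $m<-2$ handles the diagonal, and the rapid vanishing at all other faces removes any contribution beyond $\ff$ and $\tb$. The $\cM$-dependence is smooth because $\cM$ is a trivial product factor throughout. Holomorphy follows because the push-forward is integration in fibers, which commutes with $\partial_{\bar s}$ by dominated convergence on compact subsets of $U$ once uniform bounds are established. The same fiber integration also gives Lemma \ref{tracesmooth} for $t>0$.

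The main obstacle is the exact bookkeeping of weights: verifying that the restriction of the diagonal density to $\Delta\simeq\Xs$ produces exactly $\rho_{\ff}^{-\alpha}$ against a b-density, and checking the conventions on $\rho_{\ffc}$ versus $\rho_{\ff}$ after blow-down. This is where I would compute directly in the coordinates $(x,\theta,t)$ near $\gamma$, with $r=\sqrt{x^2+t^2}$ as the defining function for $\ff$. In those coordinates the metric density $\dd x\,\dd\theta$ lifts as $r\,\dd r\,\dd\omega\,\dd\theta$ in polar coordinates, and I would use this to confirm that the exponent is exactly $-\alpha$.
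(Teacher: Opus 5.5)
Your strategy is the paper's: restrict the kernel to $\cM\times\Delta\cong\cM\times\Xs$, push forward along the b-fibration $\Xs\to[0,\infty)$, apply Melrose's push-forward theorem, and get holomorphy from the fact that $\partial_{\bar s}$ commutes with restriction and push-forward. The paper does not carry out the weight bookkeeping; it refers it to \cite[Proposition 66]{cipriana}. The bookkeeping you supply is the only step that produces the exponents, and it is wrong.

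\emph{The density computation.} With $\Psi^{m,\alpha,\beta}_{\cp}=\II^{m,-(\alpha+2),-\beta}$ and a smooth density in the second factor, the kernel restricted to the diagonal is $\rho_{\mathrm{ff}}^{-\alpha-2}\rho_{\tb}^{-\beta}\,\tilde a\,|\dd x\,\dd\theta|$, with $\tilde a$ smooth on $\Xs$. In polar coordinates $x=r\cos\omega$, $t=r\sin\omega$, the total density is $|\dd x\,\dd t\,\dd\theta|=r\,\dd r\,\dd\omega\,\dd\theta=\rho_{\mathrm{ff}}^{2}\rho_{\tb}\,\nu_b$, not $\rho_{\mathrm{ff}}^{2}\nu_b$. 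Moreover the push-forward is $\cpTr A\,\dd t=(t\,\cpTr A)\,\dd t/t$. So the theorem applies to $t\,\cpTr A$, not $\cpTr A$, and gives it the index set $(\nz-\alpha)\,\overline{\cup}\,(\nz+1-\beta)$. Hence the trace itself has index set $(\nz-\alpha-1)\,\overline{\cup}\,(\nz-\beta)$.

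Your two slips cancel at $\tb$, since $t=\rho_{\mathrm{ff}}\rho_{\tb}$, but not at $\mathrm{ff}$. The quickest check is in projective coordinates $x=tX$ near the interior of $\mathrm{ff}$. There $\rho_{\mathrm{ff}}\sim t$ and $|\dd x|=t\,|\dd X|$, so the fibre integrand is $t^{-\alpha-1}(\ldots)\,\dd X\,\dd\theta$. The lifted density compensates only one of the two units of the shift. The polar-coordinate verification you propose would therefore contradict the exponent $-\alpha$, not confirm it.

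\emph{A concrete test case.} Take the kernel $\chi\,\phi(S)\psi(\theta,\theta')\,(x'^2+t^2)^{-1}|\dd x'\,\dd\theta'|$, where $S=(x'-x)/(x^2+t^2)$, $\phi$ is Schwartz and $\chi$ is a cutoff near $\gamma$. Under the conventions of Section \ref{cpcalc} it lies in $\Psi^{-\infty,0,0}_{\cp}$. Its trace is $c/t+O(1)$ with $c\neq 0$ in general, which is not in $C^\infty+\log t\,C^\infty$ as the case $\alpha=\beta=0$ would require.

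The paper's remark that $t=0$ evaluations are trace class for $\alpha<0$ shows the same offset: $\int|x|^{-\alpha-2}\,\dd x$ converges only for $\alpha<-1$. So your argument, with the stated normalizations, proves the proposition with $t^{-\alpha}$ replaced by $t^{-\alpha-1}$, and with log terms starting at $t^{\max(-\alpha-1,-\beta)}$. To reach the exponent $-\alpha$ you would have to take a different normalization of the $\mathrm{ff}$ weight from \cite{cipriana} and justify it. The claim that $\rho_{\mathrm{ff}}^2$ exactly absorbs the shift by $2$ does not do this.
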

\begin{proof}  
For every $t\geq 0$ and $x\in\cM$, the trace of $A_{t,x}$ equals the integral of the distributional kernel $\kappa_A$ of $A$ along the intersection of the lifted diagonal $\Delta$ with the ``slice'' $\{(t,x)\}$. (The distribution $\kappa_A$ is continuous on $\cM \times \Xd$ because of the order assumption.) Recall from \cite{cipriana} that this lifted diagonal is naturally diffeomorphic to the simple cusp-parameter space $\Xs$. We write, by the inverse Fourier transform, the restriction of $\kappa_A$ to $\cM\times \Delta$ as the integral with respect to the Liouville measure in $\xi$ of some classical symbol $a(t,x,y,\xi)$ of order $m<-2$ along the fibers of the conormal bundle. The trace $\cpTr A$ is then the push-forward of the resulting density through the product map $\cM\times \Delta\to\cM\times [0,\infty)$. The tautological map from the simple cusp-surgery space $\Xs$ to $[0,\infty)$ is known from \cite{cipriana} to be a \emph{b}-fibration.
It suffices to note that the cartesian product of a \emph{b}-fibration with a smooth manifold $\cM$ remains a \emph{b}-fibration.
The conclusion follows by applying the Pushforward theorem of Melrose as in \cite[Proposition 66]{cipriana} to this
\emph{b}-fibration. 

The holomorphy of $\cpTr(A)$ is clear, since derivatives in the parameters $s$ commute with restrictions and pushforward.
\end{proof}

\section{Smoothness of the Selberg zeta function for smooth variations of the metric}\label{Section3}
In this section we prove the smoothness of the Selberg zeta function for a smooth family of metrics $(g_t)_{t\in \cM}$ of curvature $-1$ on a compact surface $X$. To be clear, we stress that this statement does not cover pinching degenerations at $t=0$. Although the result seems rather natural, we are not aware of any reference in the literature. Caveat: our proof does not hold for general families of Selberg zeta functions \eqref{genzeta} defined by arbitrary sequences $l$ and $\varepsilon$ depending on $t\in \cM$, even when assuming strong uniform bounds on the $t$-derivatives of those sequences. As already mentioned, the proof follows by combining such bounds with the trace formula \eqref{selcomp1} for $k=0$.

A closed geodesic is called \emph{primitive} if it wraps around its geometric image in $X$ exactly once or, equivalently, if its conjugacy class is primitive in $\pi_1(X)$. We denote by $Z_t(s)$ the Selberg zeta function \eqref{szf} corresponding to the metric $g_t$ and the class function $\varepsilon$ characterizing the fixed spin structure.
From \cite[Lemma 26]{rares}, its logarithmic derivative satisfies the identity
\begin{equation}\label{derlog}
\Dl =\sum_{\mu  \ \text{primitive}} \sum_{n=1}^{\infty} \frac{l_t(\mu) \varepsilon^n(\mu) e^{-n l_t(\mu) \lp s- \tfrac{1}{2} \rp }}{ 2 \sinh \frac{n l_t (\mu)}{2}},
\end{equation}
where $\mu$ runs along all closed, oriented, primitive geodesics on $(X,g_t)$. 

\begin{notation}\label{notatieeta}
Fix $p\in X$ as the base point of the fundamental group. We regard a closed oriented primitive geodesic $\mu$ on $(X, g_t)$ as the conjugacy class $[\eta]$, for some primitive element $\eta \in \pi_1(X,p)$. If $M:=\WL ([\eta])$ is the word-length of the class $[\eta]$ (see Definition \ref{defwordlength}), we can suppose that $\eta:=a_1  ... a_M$, where $a_1,...,a_M \in \{ \alpha_j, \beta_j,\alpha_j^{-1}, \beta_j^{-1}; 1\leq j\leq g \}$ run among the generators of $\pi_1(X,p)$ and their inverses, as in Section \ref{generatoripi1}. 
\end{notation}
Notice that, for $t\in \cM$, the family of representations $(\rho_t)_{t\in\cM}$ arising from the metrics $g_t$ changes as in Section \ref{famrep}, but the fundamental group of $X$ at each ``time" $t$ remains the same. Therefore the decomposition of $\eta$ into generators of the fundamental group is independent of $t$. 

In order to prove that the logarithmic derivative of the Selberg zeta function corresponding to the metric $g_t$ is smooth in $t\in \cM$, we first need to bound the word length of a hyperbolic conjugacy class by the geodesic length of that class \emph{uniformly} for $t$ in a compact subset of $\cM$ (for the classical Milnor-\v{S}varc Lemma, see e.g. \cite[Theorem 3.13]{moira}). 

\begin{lemma}[Uniform Milnor-\v{S}varc Lemma]\label{Lwlgl}
Let $I \subset \cM$ be compact. Then there exists $A>0$ such that for all $t \in I$ and for all nontrivial conjugacy classes $[\eta]$ in $\pi_1(X,p)$ we have
\begin{equation}\label{wlgl}
\WL([\eta]) \leq A l_t([\eta]),
\end{equation}
where $l_t([\eta])$ is the length with respect to $g_t$ of the closed, oriented geodesic associated to $[\eta]$.
\end{lemma}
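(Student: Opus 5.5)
Our plan is to run the standard Milnor--\v{S}varc argument, using the geodesic $4g$-gon fundamental domains of Section \ref{generatoripi1}, while keeping every constant uniform in $t\in I$. We will argue on the universal cover with the metrics $\tilde g_t$ on the fixed manifold $\tX$, on which $\Gamma$ acts by deck transformations independently of $t$.

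First, the constants. For each $t$, the polygon $P_t\subset\tX$ obtained by cutting along the geodesic loops $\alpha_i^t,\beta_i^t$ is a compact fundamental domain containing $\tp$. Its $\tilde g_t$-diameter $d_t$ depends continuously on $t$, since the loops and their lifts vary smoothly. Set $D:=\sup_{t\in I} d_t<\infty$. The set
\[
S:=\{\eta\in\Gamma\setminus\{1\} : \eta P_t\cap P_t\neq\emptyset \text{ for some } t\in I\}
\]
is finite by proper discontinuity. Uniformity comes from the fact that the metrics $\tilde g_t$, $t\in I$, are uniformly bi-Lipschitz on the compact set $\bigcup_{t}P_t$ with respect to a fixed $\tilde g_{t_0}$. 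For $\eta$ in the finite set $S$, the word lengths are bounded by some $N$. In fact, with the $4g$-gon tiling, neighbouring tiles differ by a single generator or by a bounded word coming from the vertex cycle, but we only need finiteness.

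Second, the comparison for a class $[\eta]$. Let $c$ be the closed $g_t$-geodesic in the class, of length $L=l_t([\eta])$. Choose a lift $\tilde c$ and a point $x$ on it. Translating by an element of $\Gamma$, which amounts to conjugating $\eta$, we may assume $x\in P_t$, and that $\eta'$, conjugate to $\eta$, maps $x$ to $\eta' x$ at distance $L$ along $\tilde c$. We subdivide the segment $[x,\eta' x]$ into $n=\lceil L/\delta\rceil$ pieces of length at most $\delta$, where $\delta>0$ is a uniform Lebesgue-type constant: any $\tilde g_t$-ball of radius $\delta$ centered in $P_t$ meets only tiles $\gamma P_t$ with $\gamma\in S\cup\{1\}$. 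We obtain this $\delta$ from the uniform positive distance between $P_t$ and the union of the non-adjacent tiles, again by continuity and compactness of $I$. Consecutive subdivision points then lie in tiles $\gamma_j P_t,\gamma_{j+1}P_t$ with $\gamma_j^{-1}\gamma_{j+1}\in S\cup\{1\}$. Writing $\eta'=\prod_j(\gamma_j^{-1}\gamma_{j+1})$ gives $\WL([\eta])\le Nn\le N(L/\delta+1)$.

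Finally, we absorb the additive constant using the uniform systole bound: $l_t([\eta])\ge \sys(g_t)\ge s_0>0$ for $t\in I$, by continuity and positivity of the systole on compacts. This yields $\WL([\eta])\le A\,l_t([\eta])$ with $A=N(1/\delta+1/s_0)$.

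The main obstacle is the genuinely uniform choice of $\delta$ and $S$. The tiling changes with $t$, so we must show that the tiles meeting a $\delta$-neighbourhood of $P_t$ stay in a fixed finite set for all $t\in I$. We would handle this through the developing maps $\Phi_t$ and the representations $\rho_t$ of Section \ref{famrep}, which depend continuously on $t$. Concretely, $\rho_t(\eta)(i)$ escapes to infinity uniformly as the word length grows, by discreteness and continuity of the representations on the compact set $I$. We would first try to argue this by contradiction with a sequence $t_n\to t_\ast$.
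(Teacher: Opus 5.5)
Your argument is correct. It follows the same Milnor--\v{S}varc scheme as the paper: the geodesic $4g$-gon tiling, constants made uniform over the compact set $I$, and the additive constant absorbed by the uniform lower bound on the systole. The counting step, however, is organised differently.

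The paper lifts the based geodesic loop from the vertex $P^t$ to the vertex $Q^t$. It records the crossings $A_1^t,\dots,A_k^t$ with pairwise disjoint edges, grouping clusters of edges through a common vertex, and charges at most $2g$ generators per crossing. It then proves a uniform lower bound $d$ on the distance between disjoint edges. Because its segment starts at a vertex rather than on the axis, it must compare $\dist(A_1^t,A_k^t)$ with $l_t([\eta])$ up to $2\Diam(X,g_t)$, and the systole is used to absorb this diameter term.

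You start on the axis of a suitable conjugate, so the segment has length exactly $L$ and the only additive error is the rounding in $\lceil L/\delta\rceil$. Instead of edge combinatorics you use a uniform Lebesgue number $\delta$ and the finite neighbour set $S$, paying $N=\max_S\WL$ per step. Your version does not depend on the particular combinatorics of the $4g$-gon and has no diameter term. The paper's version instead expresses the constant directly through the tiling geometry.

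Your contradiction argument for a uniform $\delta$ does go through, provided you use two facts: all $P_t$, $t\in I$, lie in one fixed compact subset of $\tilde X$, and $\{(t,x): x\in P_t\}$ is closed in $I\times\tilde X$. The alternative you mention, that $\rho_t(\eta)(i)$ escapes uniformly ``by discreteness and continuity'', would itself need a proof. The natural proof is the bi-Lipschitz comparison you already invoke.

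In fact that comparison holds globally, because $X$ is compact. There is $c>0$ with $g_t\ge c^2 g_{t_0}$ for all $t\in I$, hence $l_t([\eta])\ge c\, l_{t_0}([\eta])$ for every class. So if $\WL([\eta])\le A_0\, l_{t_0}([\eta])$ by the classical Milnor--\v{S}varc lemma for the single metric $g_{t_0}$, then $\WL([\eta])\le (A_0/c)\, l_t([\eta])$ for all $t\in I$. This shortcut makes both your uniform $\delta$ and the paper's uniform $d$ unnecessary.
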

\begin{proof}
Let $\nu^t$ be a geodesic that closes in $p$ at an angle and has the same homotopy class as $\eta$. As in Section \ref{generatoripi1}, the universal cover $\mathbb H$ of $X$ is tiled by fundamental domains which are polygons having $4g$ edges corresponding to the geodesic generators of the fundamental group $\pi_1(X,p) = \langle \alpha_1^t, \beta_1^t,...,\alpha_g^t, \beta_g^t : [\alpha_1^t, \beta_1^t]...[\alpha_g^t, \beta_g^t] =1 \rangle$. We lift $\nu^t$ to $\tilde{\nu}^t$, a geodesic segment in $\mathbb H$ with endpoints $P^t$ and $Q^t$, preimages of the base point $p$. The geodesic $\tilde{\nu}^t$ intersects several polygons of the tiling of $\mathbb H$. Let us denote the intersection points of $\tilde{\nu}^t$ with the edges of these polygons by $A_1^t:=P^t,...,A_k^t:=Q^t$ under the following rule. If $\tilde{\nu}^t$ cuts a cluster of edges having a common point, then we denote by $A_i^t$ the first intersection of $\tilde{\nu}^t$ with the cluster, and we denote by $A_{i+1}^t$ the next intersection point which does \emph{not} belong to the cluster. Thus all the intersection points $A_1^t,...,A_k^t$ lie on disjoint edges. 

Remark that $\tilde{\nu}^t$ is homotopic to a curve $\mu^t$ in $\mathbb H$ between $A_1^t$ and $A_k^t$ consisting only of edges of the fundamental domains intersected by $\tilde{\nu}^t$. Moreover, by going ``higher" or ``lower" than $\tilde{\nu}^t$, we can  judiciously choose $\mu^t$ to contain a minimal number of such edges. 
In order to pass from an $A_i^t$ to a consecutive point $A_{i+1}^t$, we need maximum $2g$ generators, thus we notice that 
\begin{equation}\label{ecwl}
\WL ([\eta]) \leq \WL (\pr(\mu^t)) \leq 2g (k-1),
\end{equation}
where $\pr: \mathbb H \longrightarrow X$ is the canonical projection.

We claim that the distance between any two disjoint edges of the tiling is bounded below by a positive constant $d$, uniformly for $t\in I$. Indeed, let $\alpha^t$ be a generator in the tiling of $\mathbb H$, and denote by $\mathcal{D}^t$ the compact set of points which are at distance at most $1$ from $\alpha^t$. Furthermore, let $E^t$ be the set of all edges belonging to polygons of the tiling of $\mathbb H$ which have a non-empty intersection with $\mathcal{D}^t$. We claim that the number of edges in $E^t$ is bounded above uniformly for $t$ varying in the compact set $I$. Indeed, this holds true due to the fact that neither of the lengths of the generators in the tiling goes to zero in the degeneracy process as $t$ varies in $I$. Thus all the distances between edges in $E^t$ which are disjoint from $\alpha^t$ and $\alpha^t$ are bounded below by a $d_0>0$, which is again uniform for $t\in I$. Furthermore, if $\beta \notin E^t$ is an edge of the tiling of $\mathbb H$, then clearly $\dist (\alpha,\beta) > 1$, and the conclusion follows by choosing $d:=\min(d_0,1)$.

Since the points $A_i^t$ lie on disjoint edges, it follows that $\dist (A_i^t, A_{i+1}^t) \geq d$, uniformly in $t$. Therefore we get $l_t(\nu^t) = \dist(A_1^t,A_k^t) = \dist(A_1^t,A_2^t)+...+\dist(A_{k-1}^t,A_k^t) \geq d(k-1)$, and we conclude that
\begin{equation}\label{k-1c}
k-1 \leq \frac{l_t(\nu^t)}{d}. \end{equation}

Clearly the length of $\tilde{\nu}^t$ is greater than the length of the conjugacy class $[\eta]$. Moreover, for every $t\in \cM$, this conjugacy class $[\eta]$ corresponds to an unique closed oriented geodesic $\mu$ on $(X, g_t)$. Denote by $\tilde{\mu}$ the lift of $\mu$ starting at a point lying in the same fundamental domain as $A_1^t$. Using the triangle inequality, we furthermore obtain
\[ l_t([\eta]) \leq \dist(A_1^t,A_k^t) \leq l_t([\eta]) +  \dist (A_1^t, \tilde{\mu}) +  \dist (A_k^t, \tilde{\mu}) \leq l_t([\eta])+2\Diam (X, g_t).\]
Using now \eqref{ecwl} and \eqref{k-1c}, it follows that
\[ \WL ([\eta]) \leq 2g (k-1) \leq 2g \frac{l_t(\nu^t)}{d} = \frac{2g}{d}\dist(A_1^t,A_k^t)  \leq \frac{2g}{d} \lp l_t([\eta]) + 2\Diam (X, g_t) \rp, \]

Since $t$ varies in the compact $I$, it follows that the positive map $t \mapsto \sys(X,g_t)$, the length of the shortest geodesic on $X$, is uniformly bounded from below by a positive constant $c_0$. Furthermore, the function $t \mapsto \Diam (X,g_t)$ is bounded from above. Therefore there exists a $c_1$ such that
\[ \Diam (X, g_t) \leq \sys(X,g_t) \frac{\Diam (X, g_t)}{c_0} \leq l_t([\eta]) \frac{c_1}{c_0}.   \]

Thus we conclude that
\[ \WL ([\eta]) \leq l_t([\eta]) \frac{2g}{d} \lp  1+2 \frac{c_1}{c_0}  \rp , \]
and remark that the constants $d,c_0$ and $c_1$ do not depend on $[\eta]$, which ends the proof.
\end{proof}
One could try to bound the word length of a conjugacy class $[\eta]$ in $\pi_1(X,p)$ by a function of $l_t([\eta])$ uniformly in a pinching process as in Definition \ref{pinchingproc} by studying cusp excursions as in \cite{fedosovapohl}. A bound of the number of hyperbolic elements for (fixed non-degenerating) Fuchsian lattices was obtained by Eichler \cite{eichler} using Dirichlet fundamental domains.

\begin{lemma}\label{lemaboundgeod}
Let $X$ be a compact surface and $(g_t)_{t\in \cM}$ a smooth family of hyperbolic metrics on $X$. For any compact subset $I \subset \cM$ and $j$ a multi-index, there exist $K_j:= K_j(I)$ and $ B_j:=B_j(I, \alpha_1, \beta_1, ... , \alpha_{2g}, \beta_{2g})\in \rz$ (see \eqref{genpi1}) such that
for every primitive conjugacy class $[\eta]$ in $\pi_1(X,p)$
and $t\in I$,
\begin{equation}\label{boundderlung}
\frac{d^{|j|}}{dt^{j}} l_t([\eta]) \leq K_j e^{B_jl_t([\eta])}.
\end{equation} 
\end{lemma}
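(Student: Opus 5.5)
We express the length algebraically through the family of representations $(\rho_t)_{t\in\cM}$ of Section \ref{famrep}. Fix lifts $\tilde\rho_t(\alpha_i),\tilde\rho_t(\beta_i)\in\SL_2(\rz)$ of the generators. They depend smoothly on $t$, because the isometries $\Phi_t$ vary smoothly and the lift to $\SL_2(\rz)$ can be chosen continuously. For a primitive hyperbolic class $[\eta]$ written as in Notation \ref{notatieeta} with $\eta=a_1\cdots a_M$ and $M=\WL([\eta])$, we have
\[
2\cosh\frac{l_t([\eta])}{2}=\left|\Tr\bigl(\tilde\rho_t(a_1)\cdots\tilde\rho_t(a_M)\bigr)\right|.
\]
The trace has constant sign in $t$, since it never vanishes on hyperbolic elements. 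So $l_t([\eta])=2\operatorname{arccosh}(\pm T_\eta(t)/2)$, where $T_\eta(t):=\Tr(\tilde\rho_t(a_1)\cdots\tilde\rho_t(a_M))$ is smooth in $t$.

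Next we bound the derivatives of $T_\eta$. On the compact set $I$, fix $N\ge 1$ such that for every generator or inverse $a$ and every multi-index $|i|\le|j|$, the operator norm $\|\partial_t^i\tilde\rho_t(a)\|$ is at most $N$. By the Leibniz rule, $\partial_t^j$ of the product is a sum of at most $M^{|j|}$ products of $M$ matrices, each of norm at most $N$. Hence
\[
|\partial_t^jT_\eta|\le 2M^{|j|}N^M .
\]
By Lemma \ref{Lwlgl}, $M\le A\,l_t([\eta])$ uniformly on $I$. Moreover $M^{|j|}\le C_j e^{M}$. Therefore $|\partial_t^jT_\eta(t)|\le C_j' e^{B' l_t([\eta])}$ with $B'=A(1+\log N)$.

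Finally we pass from $T_\eta$ to $l_t$ via the Faà di Bruno formula applied to $f(u)=2\operatorname{arccosh}(u/2)$, evaluated at $u=|T_\eta|\ge 2\cosh(c_0/2)>2$. Here $c_0>0$ is the uniform lower bound on the systole over $I$, used already in the proof of Lemma \ref{Lwlgl}. All derivatives of $f$ are bounded on $[2\cosh(c_0/2),\infty)$, because $f'(u)=(u^2/4-1)^{-1/2}$ and its derivatives decay. Hence $\partial_t^j l_t([\eta])$ is a finite combination, with bounded coefficients, of products of at most $|j|$ derivatives $\partial_t^{i}T_\eta$. This gives the bound $K_je^{B_jl_t([\eta])}$ with $B_j=|j|B'$, which is \eqref{boundderlung}.

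The main point, and the only delicate step, is the uniformity. Without the uniform Milnor--\v{S}varc Lemma \ref{Lwlgl}, the word length $M$ could not be controlled by the geodesic length uniformly for $t\in I$. A secondary check is that the lifts $\tilde\rho_t$ to $\SL_2(\rz)$ can be chosen smoothly in $t$. This is local on $\cM$, which suffices after covering $I$ by finitely many charts and taking the maximum of the resulting constants. For $\cM$ not simply connected, one works with $|T_\eta|$, which is independent of the choice of lift.
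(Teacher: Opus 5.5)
Your proof is correct and follows the paper's route. You write $l_t$ through the trace of $\rho_t(\eta)$, bound the $t$-derivatives of the trace by $M^{|j|}N^M$ using the Leibniz rule on a minimal word, use the uniform systole bound to control the derivatives of $\operatorname{arccosh}$, and convert $M$ into $l_t([\eta])$ via the uniform Milnor--\v{S}varc Lemma \ref{Lwlgl}; your explicit handling of the $\SL_2(\rz)$ lift and the sign of the trace, and Fa\`a di Bruno in place of the paper's universal polynomial $P_j$, make the same argument more precise.
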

\begin{proof}
Recall that the length of the closed geodesic (with respect to $g_t$) determined by $[\eta]$ is given in terms of the representation $\rho_t:X\to \PSL_2\rz$ by
\begin{equation}\label{formulalungtr}
l_t \lp [\eta] \rp = 2 \mathrm{arcosh} ({\Tr \rho_t (\eta)}/{2}).
\end{equation}
Since $X$ is compact, the length of the systole of $(X,g_t)$ is  bounded below by a positive constant depending on $t$. Since $I$ is compact, this is also the case uniformly for $\lp l_t ([\eta]) \rp_{t \in I}$. By \eqref{formulalungtr}, it follows that $\lp \Tr \rho_t(\eta) \rp_{t \in I}$ is bounded below by a constant strictly greater than $2$, so for every $u>0$,
\begin{equation}\label{inegradtr}
\left(\frac{\Tr^2 \rho_t(\eta)}{4}-1\right)^{-u}\lesssim 1.
\end{equation}
For a multi-index $j$, let us consider the maximum between the $\mathcal C^{|j|}$-norms of the images of the generators in $\pi_1(X,p)$ through $\rho$:
\begin{equation}\label{Calpha}
\begin{aligned}
C_{j}:={}& \max \left\{ 1, \left\Vert \frac{\partial ^k}{\partial t^k} \rho_t (\alpha_i) \right\Vert, \left\Vert \frac{\partial ^k}{\partial t^k} \rho_t (\beta_i) \right\Vert : \ k\leq j, \ i=\overline{1,g}
\right\},
\end{aligned} 
\end{equation}
where we denoted by $\Vert \cdot \Vert$ the operator norm on $2\times 2$ matrices. The right-hand side of \eqref{formulalungtr} is independent of the choice of $\eta$ inside its conjugation class, so we write $\eta$ as a minimal word $a_1\cdot\ldots\cdot a_M$ in the generators and their inverses, where $M$ is the word length of the class $[\eta]$, as in Notation \ref{notatieeta}. By the Leibnitz rule, it follows that for every multi-index $j$, there exists a universal polynomial $P_j$ in the partial derivatives up to order $j$ of the traces of the generators, such that
\begin{equation}\label{derlungimii1}
\begin{aligned}
\frac{d^{|j|}}{dt^{j}} l_t([\eta]) 
={}&  \left( \frac{\Tr^2 \rho_t(\eta)}{4}-1\right)^{\frac{1}{2}-|j|} P_{j}\left( \frac{\partial^{|k|}}{\partial t^k} \Tr \rho_t(\eta) : k\leq j \right)  \lesssim M^{|j|}C_j^M,
\end{aligned}
\end{equation}
Finally, using the uniform Milnor-\v{S}varc lemma \ref{Lwlgl}, there exist constants $B_j$ and $K_j$ such that
\begin{align*}
    \frac{d^{|j|}}{dt^{j}} l_t([\eta]) \leq K_j e^{B_jl_t([\eta])},
\end{align*}
concluding the proof.
\end{proof}

Let $\cM$ be a smooth manifold and let $(g_t)_{t\in \cM}$ be a smooth family of hyperbolic metrics on a compact oriented surface $X$, $l_t(\mu)$ the length of the conjugacy class $[\eta]$ defining an oriented geodesic $\mu$ with respect to $g_t$, and $\varepsilon:\pi_1(X,p)\to \cz$ a bounded class function, independent of $t$, not necessarily arising from a spn structure. In the following results, we use Notation \ref{notatieco}.

\begin{lemma}\label{zetadreapta}
Let $m \in \mathbb N$. There exists a constant $\cgot_m$ which depends on $m$ and $B_1,...,B_{m}$ from Lemma \ref{lemaboundgeod} such that
\[ Z_t(s) \in \mathcal C^{m} \lp \mathcal M , \Hol \lp \{  \Re z > \cgot_m \} \rp \rp . \]
\end{lemma}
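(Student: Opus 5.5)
We plan to work directly with the product \eqref{szf}, or more conveniently with its logarithm
\[
\log Z_t(s)=-\sum_{\mu\ \text{primitive}}\sum_{m=0}^\infty\sum_{n=1}^\infty \frac{\varepsilon(\mu)^n}{n}e^{-n l_t(\mu)(s+m)}
=-\sum_{\mu}\sum_{n\geq 1}\frac{\varepsilon(\mu)^n}{n}\,\frac{e^{-nl_t(\mu)s}}{1-e^{-nl_t(\mu)}},
\]
valid for $\Re s>1$, and show that this series together with all its $t$-derivatives of order $\leq m$ converges absolutely and locally uniformly in $(t,s)\in I\times\{\Re s>\cgot_m\}$ for each compact $I\subset\cM$. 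Termwise differentiation is then justified, giving $\log Z_t\in \mathcal C^m$. Each term is holomorphic in $s$, and locally uniform limits of holomorphic functions are holomorphic. Exponentiating then yields the claim for $Z_t$.

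The key step is to bound a mixed derivative $\partial_t^j$ of a single term $f(l)=\frac{1}{n}\frac{e^{-nls}}{1-e^{-nl}}$ evaluated at $l=l_t(\mu)$. By the chain rule (Faà di Bruno), $\partial_t^j f(l_t)$ is a finite sum of products of $f^{(r)}(l_t)$, with $r\leq|j|$, and of derivatives $\partial_t^{k}l_t$ with $|k|\leq |j|$, the number of factors being at most $|j|$. On $I$ the systole is bounded below by $c_0>0$, so $1-e^{-nl}\geq 1-e^{-c_0}$. Consequently $|f^{(r)}(l)|\lesssim (n(|s|+1))^r e^{-nl\Re s}$, with constants uniform in $n$ and $l\geq c_0$. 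Lemma \ref{lemaboundgeod} gives $|\partial_t^k l_t(\mu)|\leq K_k e^{B_k l_t(\mu)}$. Combining these,
\[
|\partial_t^j f(l_t(\mu))|\lesssim (1+|s|)^{|j|} n^{|j|} e^{-n l_t(\mu)\Re s}\, e^{|j|B l_t(\mu)},\qquad B:=\max_{k\leq m}B_k.
\]
Since $n\geq1$, this is at most $(1+|s|)^{m}n^m e^{-n l_t(\mu)(\Re s - mB)}$.

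It remains to sum over $n$ and $\mu$. By \eqref{m-alunggeod} we have $l_i(t)\geq \log i - C$ uniformly on $I$, where $l_i(t)$ is the $i$-th shortest geodesic length. Hence $\sum_\mu e^{-\sigma n l_t(\mu)}\leq e^{\sigma n C}\sum_i i^{-\sigma n}$, which converges for $\sigma>1$. The sum over $n$ also needs control: we write the bound as $n^m e^{-n\sigma l}$ with $l\geq c_0$, and split off a factor $e^{-(n-1)\sigma c_0}$ so that the $n$-sum is geometric. This gives convergence for $\sigma=\Re s-mB>\max(1,\,C\text{-dependent threshold})$. We therefore take $\cgot_m := mB+1+\delta$ with a margin $\delta$ absorbing the $e^{nC\sigma}$ factor. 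That factor can be handled by using $l\geq c_0$ on the first term and $\log i - C$ only for large $i$; alternatively we use the count $\#\{\mu: l_t(\mu)\leq L\}\lesssim e^{L+C}$, which follows from \eqref{m-alunggeod}. Convergence is then uniform on $I\times\{|s|\leq R,\ \Re s\geq\cgot_m+\delta'\}$. Note that $\varepsilon$ is bounded by $1$ in modulus, but only boundedness is needed.

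The main obstacle is the uniformity of the derivative bounds on the lengths. The exponential growth $e^{B_k l}$ from Lemma \ref{lemaboundgeod}, which ultimately rests on the uniform Milnor--Švarc Lemma \ref{Lwlgl}, is what forces the half-plane to shift to $\Re s>\cgot_m$ depending on $m$. The remaining care lies in checking that the polynomial factors in $n$ and $|s|$ produced by the chain rule do not spoil the uniform convergence, which is routine.
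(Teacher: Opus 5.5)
Your proposal is correct and is essentially the paper's argument: differentiate $\log Z_t$ termwise, absorb the growth $e^{B_j l_t(\mu)}$ from Lemma \ref{lemaboundgeod} by shifting the half-plane to the right, and sum over geodesics using the uniform lower bound on the systole together with \eqref{m-alunggeod}. The differences are cosmetic: you expand each logarithm and resum over the product index, and you get the $s$-derivatives from holomorphy instead of bounding $\partial_s^{m-q}\partial_t^q$ directly. One small point: your splitting-off of $e^{-(n-1)\sigma c_0}$ already settles the sum over $n$ for $\Re s>mB+1$, so the remark about a margin $\delta$ absorbing $e^{nC\sigma}$ is superfluous, and a fixed margin could not absorb that factor on its own.
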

\begin{proof}
By taking the logarithm of \eqref{genzeta}
\begin{equation}\label{f(t,s)}
\log Z_t(s) = \sum_{\mu  \ \text{primitive}} \sum_{n=0}^{\infty} \log \lp 1- \varepsilon(\mu) e^{-l_t(\mu) (s+n)}  \rp, 
\end{equation}
where $\mu$ runs along all the oriented primitive closed geodesics on $(X,g_t)$. It suffices to prove the statement for the function $(t,s) \mapsto \log Z_t(s)$. 

First, we claim that $t \mapsto \log Z_t(s)$ is continuous for $s \in \{z \in \mathbb C: \Re z >2 \}$. Indeed, the sum in the RHS of \eqref{f(t,s)} has the same nature as
\begin{align*}
 \sum_{\mu  \ \text{primitive}} e^{-l_t(\mu) s} \sum_{n=0}^\infty e^{-l_t(\mu) n} ={}&  \sum_{\mu  \ \text{primitive}} e^{-l_t(\mu) s} \frac{1}{1- e^{-l_t(\mu)}}. 
\end{align*} 
Since $l_t(\mu)$ is greater than the length of the systole corresponding to $(X,g_t)$, and for $t \in I$ the map $t \mapsto \sys(X, g_t)$ is bounded below by a positive constant, $ \frac{1}{1- e^{-l_t(\mu)}}$ is bounded.
Furthermore, using \eqref{m-alunggeod}, we get
\begin{equation}\label{eq100}
\begin{aligned}
\left| \sum_{\mu  \ \text{primitive}} e^{-l_t(\mu) s} \right| \leq \sum_{n=1}^{\infty} e^{-l_n(t) \Re s} \leq \sum_{n=1}^{\infty} e^{-\Re s (\log n -C)} = e^{C \Re s} \sum_{n=1}^{\infty} n^{-\Re s},   
\end{aligned}
\end{equation}
which is absolutely convergent for $\Re s>2$, hence our claim follows. 

Let $m \geq 1$, $h>0$, and let $I \subset \mathcal M $ be a compact. Let us show that it is of class $\mathcal C^m$, for $t \in I$ in a strip $\Im s < h$ on a sufficiently right-shifted halfplane $\{\Re s > \cgot_{m,h}>h \}$. Let us first take $m-q$ derivatives in $s$ and $q$ derivatives in $t$ in the RHS of \eqref{f(t,s)}. Let us denote by $Y=\varepsilon(\mu) e^{-l_t(\mu)  (s+n)}$ and remark that
\[ \partial_s^{m-q} \partial_t^q \log (1-Y) = \frac{Y P \lp Y,l_t(\mu),l_t'(\mu),...,l_t^{(m)}(\mu),s+n \rp}{(1-Y)^m}, \]
where $P$ is a polynomial with integer coefficients which depends only on $m,q$. Let $d_0$ be the degree of $P$ in $Y$, $d_1$ be the sum of its degrees in the variables $l_t(\mu),l_t'(\mu),...,l_t^{(m)}(\mu)$, and $d_2$ its degree in $s+n$. Notice that if $q=0$, then $d_2=0$. Since $|Y|$ is uniformly bounded for all $\mu, s,n$, using Lemma \ref{lemaboundgeod}, we obtain that
\[  \left| P\lp Y,l_t(\mu),l_t'(\mu),...,l_t^{(m)}(\mu),s+n \rp \right| \lesssim (d_0+1)|s+n|^{d_2} e^{d_1 l_t(\mu) \max_{j=1}^{m}B_j}. \]

Since $l_t(\mu)$ is greater than the length of the systole corresponding to $(X,g_t)$, and for $t \in I$ the map $t \mapsto \sys(X, g_t)$ is bounded below by a positive constant, there exist a constant which bounds $\left| \frac{1}{1-\varepsilon(\mu)e^{-l_t(\mu)(s+n)}} \right|$ for $n \in \mathbb N$. 
Denote by $B:=d_1 \max_{j=1}^m B_j$, and let $h>0$. Since $\Re s + n > h$ for any $n \in \mathbb N$ we obtain the following inequalities

\begin{align*}
    \left|  \partial_s^{m-q} \partial_t^q \log (1-Y) \right| \lesssim{}&  | \Re s +n+ih|^{d_2} e^{-l_t(\mu) ( \Re s +n-B)} \\
\lesssim{}&  |\Re s +n|^{d_2} e^{-\frac{l_t(\mu) (\Re s +n-B)}{2}}  e^{-\frac{l_t(\mu) (\Re s +n-B)}{2}} \\
\lesssim{}&  e^{-l_t(\mu)\frac{\Re s+n-B}{2}}.
\end{align*}
Therefore, using again \eqref{m-alunggeod}, the sum of the term by term differentiation in the RHS of \eqref{f(t,s)} is bounded by 
\begin{align*}
 {}& \left| \sum_{\mu  \ \text{primitive}} \sum_{n=0}^{\infty} \partial_s^{m-q} \partial_t^q \log (1-Y) \right| \lesssim  \sum_{\mu  \ \text{primitive}}  e^{-l_t(\mu)  \frac{\Re s-B}{2} } \sum_{n=0}^{\infty} e^{-l_t(\mu)\frac{n}{2}} \\
 \lesssim{}&   \sum_{\mu  \ \text{primitive}}  e^{-l_t(\mu)  \tfrac{\Re s-B}{2} }
=   \sum_{n=1}^{\infty}  e^{-l_n(t) \frac{\Re s-B}{2} } \leq \sum_{n=1}^{\infty} e^{-\frac{\Re s - B}{2} \lp \log n - C  \rp} = e^{C \frac{\Re s - B}{2}} \sum_{n=1}^{\infty}  n^{-\frac{\Re s-B}{2}},
\end{align*}
and the last sum is absolutely convergent for $\tfrac{\Re s-B}{2} >2$, which is equivalent to $\Re s > 4+B$. By choosing $\cgot_m:=\max (h,  4+B)$, the claim follows. 
\end{proof}

As a corollary, for every $m\geq 0$, the logarithmic derivative of $Z_t$ from \eqref{genzeta} is also of class $\cC^m$ in $t$ and holomorphic in $s$ for real part of $s$ sufficiently large:
\begin{equation}\label{derlogsmoothdreapta}
\frac{Z_t'}{Z_t}(s) \in \mathcal C^{m} \lp \cM, \Hol \lp   \{  \Re (z) > \kappa_m \} \rp \rp .  \end{equation}

We now analyze the Zeta functions corresponding to a smooth family $(g_t)_{t\in\cM}$ of hyperbolic metrics on the compact surface $X$, indexed by some manifold of parameters $\cM$.

\begin{proposition}\label{derlogafarapoli} 
Let $U \subset \cz$ be an open set on which the functions $s \mapsto \frac{Z_t'}{Z_t}(s)$ are holomorphic for any $t \in \cM$. Then
$ \frac{Z_t'}{Z_t} (s) \in \mathcal C^{\infty} \lp \cM, \Hol(U) \rp. $
\end{proposition}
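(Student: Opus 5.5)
We will use the trace formula \eqref{selcomp1} with $k=0$ to transport smoothness from a right half-plane to the set $U$. Smoothness in $(t,s)$ is local, so we fix $t_0\in\cM$ and $s_1\in U$ and prove smoothness on a neighbourhood $I\times B$, with $I$ a compact neighbourhood of $t_0$ and $B$ a small disc around $s_1$.

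Choose $s_0$ real with $s_0>\kappa_m$, as in \eqref{derlogsmoothdreapta}, and also $s_0>1$. Then $(s_0-\tfrac12)^2>0$ lies outside $-\Spec \Dd_t^2$ for every $t$. By \eqref{derlogsmoothdreapta}, the first term $\frac{2s-1}{2s_0-1}\frac{Z_t'}{Z_t}(s_0)$ is of class $\cC^m$ in $t$ and is affine in $s$. Since $m$ is arbitrary and the remaining terms will turn out to be $\cC^\infty$, this gives smoothness. (A different $s_0$ for each $m$ is harmless, because the left-hand side of \eqref{selcomp1} does not depend on $s_0$.)

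The integral term $\int_{\rz}\xi\coth(\pi\xi)\big(\cdots\big)\,d\xi$ is independent of $t$, since the area is constant by Gauss--Bonnet. It is holomorphic in $s$ away from the poles of the integrand: the difference of the two fractions decays like $\xi^{-4}$, and differentiation under the integral sign is justified. So this term is smooth.

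The main work is the resolvent trace
\[
\Tr\Big[\big(\Dd_t^2+(s-\tfrac12)^2\big)^{-1}-\big(\Dd_t^2+(s_0-\tfrac12)^2\big)^{-1}\Big].
\]
Using the resolvent identity, we rewrite the difference as
\[
\big((s_0-\tfrac12)^2-(s-\tfrac12)^2\big)\big(\Dd_t^2+(s-\tfrac12)^2\big)^{-1}\big(\Dd_t^2+(s_0-\tfrac12)^2\big)^{-1},
\]
which is a family of order $-4<-2$. Via the Bourguignon--Gauduchon identification of Section \ref{spinbundlesection}, $\Dd_t^2+(s-\tfrac12)^2$ is a smooth family in $\cC^\infty(\cM\times\cz,\Psi^2(X))$, holomorphic in $s$.

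The hypothesis that $Z_t'/Z_t$ is holomorphic on $U$ is what guarantees invertibility. Poles of $Z_t'/Z_t$ occur exactly at $s$ with $(s-\tfrac12)^2\in-\Spec\Dd_t^2$, namely $s=\tfrac12\pm i\lambda$, and the residue there is the multiplicity, up to the trivial case $s=\tfrac12$ with $\lambda=0$. Hence $\Dd_t^2+(s-\tfrac12)^2$ is invertible for all $(t,s)\in\cM\times U$. The exceptional points where the right side of \eqref{selcomp1} has removable singularities, notably $s=\tfrac12$ where the factor $(2s-1)$ cancels, are to be handled by applying the Cauchy integral formula on a small circle in $s$.

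Spectral invariance (Lemma \ref{spectralinv}), applied over the parameter manifold $\cM\times U$, makes the inverse a smooth family in $\Psi^{-2}$. Holomorphy in $s$ follows by differentiating the identity $A A^{-1}=1$ in $\bar s$. Composition (Lemma \ref{composition}) then gives a smooth, $s$-holomorphic family of order $-4$, and Lemma \ref{tracesmooth} shows that its trace lies in $\cC^\infty(\cM,\Hol(U))$.

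Combining the three terms yields the claim. The step we expect to be the main obstacle is justifying that holomorphy of $Z_t'/Z_t$ on $U$ forces invertibility of $\Dd_t^2+(s-\tfrac12)^2$ for every $t$, together with the treatment of the isolated removable points. If this turns out to be delicate, the fallback is to first prove smoothness on the complement of the closed set of pairs $(t,s)$ where invertibility fails, and then extend by Cauchy integrals over small circles in $s$ on which invertibility holds uniformly for $t$ near $t_0$.
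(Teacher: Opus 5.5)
Your argument follows the paper's proof. Both use the trace formula \eqref{selcomp1} with $k=0$ and $\Re s_0>\kappa_m$, handle the first term by \eqref{derlogsmoothdreapta}, and handle the resolvent difference by Lemma \ref{spectralinv} over $\cM\times U$ together with Lemma \ref{composition} and Lemma \ref{tracesmooth}. Writing the difference via the resolvent identity (order $-4$) is a cleaner trace-class argument than the paper's principal-symbol remark. Your residue argument also makes explicit why the hypothesis forces $\Dd_t^2+(s-\tfrac12)^2$ to be invertible on $\cM\times U$, which the paper merely asserts. Your worry about removable singularities in the spectral term is unfounded. At $s=\tfrac12$ the factor $2s-1$ only lowers the double pole of $\cR_t(s)$ to a simple pole of $Z_t'/Z_t$ with residue $\dim\ker\Dd_t>0$. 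So either $\tfrac12\notin U$ or the operator is invertible there, and no Cauchy-integral repair is needed.

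The step that does not work as written is the $t$-independent integral term. Read literally, for $s=\tfrac12+iy$ with $y\neq0$ the integrand has a non-integrable singularity at $\xi=\pm y$. Differentiating under the integral sign therefore only gives holomorphy on $\{\Re s\neq\tfrac12\}$. Moreover, on $\{\Re s<\tfrac12\}$ the literal integral is not the function occurring in \eqref{selcomp1}: it is not the analytic continuation from $\{\Re s>\tfrac12\}$, and the two differ by a nonzero multiple of $(s-\tfrac12)\tan(\pi s)$. The term in the trace formula is the meromorphic continuation from \cite[Lemma 28]{rares}. It has simple poles at $s=\tfrac12-n$, $n\in\mathbb N^*$, with residues $4(g-1)n$. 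Correspondingly, your claim that the poles of $Z_t'/Z_t$ occur ``exactly at $s=\tfrac12\pm i\lambda$'' omits these poles. The repair is what the paper does. These points lie off the critical line, where the affine first term and the spectral term are holomorphic, so they are genuine poles of $Z_t'/Z_t$. The hypothesis therefore forces $U\cap(\tfrac12-\mathbb N^*)=\emptyset$, and on such $U$ the continued integral term is holomorphic and trivially smooth in $t$.
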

\begin{proof}
Let $m \in \mathbb N$ and let $s_0 \in \mathbb C$ such that $\Re s_0 > \kappa_m$, where $\kappa_m$ is the constant from \eqref{derlogsmoothdreapta}. Recall that the Selberg trace formula \eqref{selcomp1}, valid for $t\in \cM$ and $s\in\cz\setminus\left(\frac{1}{2} + i \Spec \Dd_{t}\right)$, expresses
$\Dl$ as a sum of three terms, since the number of cusps, $k$, equals $0$ for closed surfaces.
The first term, $\frac{2s-1}{2s_0-1}\Dlsz$, is of class $\mathcal C^{m}$ in $t$ by \eqref{derlogsmoothdreapta} and polynomial in $s$, hence belongs to $\mathcal C^{m} \lp \cM, \Hol(U) \rp$.

The third term in \eqref{selcomp1} is independent of $t$ and meromorphic in $s$, with poles in $-\mathbb{N}^*+\frac{1}{2}$ (see \cite[Lemma 28]{rares}). By hypothesis, $U$ avoids this set of poles.

The second term in \eqref{selcomp1} is the spectral contribution $(s-1/2)\Tr(\cR_t(S)-\cR_t(s_0))$, where 
\begin{equation}\label{resolvent}
\cR_t(s) = \lp \Dd_t^2 + \lp s-\tfrac{1}{2} \rp^2 \rp^{-1}\end{equation}
is the resolvent of $\Dd_t^2$ at the spectral value $\lp s-\tfrac{1}{2} \rp^2$.

By hypothesis, $U$ avoids the set $\bigcup_{t \in \cM} \lp \frac{1}{2} + i \Spec \Dd_{t} \rp $, thus the family $\Dd_t^2 + (s-\frac{1}{2})^2$ is invertible for every $(t,s)\in \cM\times U$. By Lemma \ref{spectralinv}, the family of inverses is smooth of order $-2$ and holomorphic in $s$. These resolvents have the same principal symbol for every value of the spectral parameter $s$. Their difference is therefore smooth of order $-3$, hence trace class on the surface $X$, so by Lemma \ref{tracesmooth} the second term in \eqref{selcomp1} is smooth in $(t,s)$; from the Cauchy-Riemann equations, it is also holomorphic in $s \in U$. Together, it follows that $ \frac{Z_t'}{Z_t} (s) \in \mathcal C^{m} \lp \cM, \Hol(U) \rp$. Since $m \in \mathbb N$ was arbitrary, this completes the proof.
\end{proof}

\begin{lemma}\label{lambda0}
Let $t_0\in\cM$ and $\lambda_0 \in \Spec \Dd_{t_0}$ an eigenvalue of multiplicity $m_0$. Then there exists a neighborhood $I\subset \cM$ of $t_0$ and $\delta>0$ such that $\ball \cap \Spec \Dd_t$ has exactly $m_0$ eigenvalues (counted with multiplicities) for any $t \in I$.
\end{lemma}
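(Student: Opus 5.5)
We prove the lemma by a standard Riesz-projector argument for the family $(\Dd_t)_{t\in\cM}$, using the families calculus from the preliminaries (Lemmas \ref{composition}, \ref{spectralinv}, \ref{tracesmooth}).

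First, since $\Dd_{t_0}$ is a self-adjoint elliptic operator on the closed surface $X$, its spectrum is discrete. Hence we can choose $\delta>0$ such that
\[
\overline{B_{2\delta}(\lambda_0)}\cap\Spec\Dd_{t_0}=\{\lambda_0\}.
\]
Let $\mathcal C$ be the circle $|\lambda-\lambda_0|=\delta$. It is compact and avoids $\Spec\Dd_{t_0}$.

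Next, we show that $\mathcal C$ avoids $\Spec\Dd_t$ for $t$ near $t_0$. Consider the family $\Dd_t-\lambda$, indexed by $(t,\lambda)\in\cM\times\mathcal C$; it is a smooth family of first-order pseudodifferential operators. Invertibility is an open condition, uniformly over the compact set $\{t_0\}\times\mathcal C$. Concretely, the operator norm $\|(\Dd_{t_0}-\lambda)^{-1}\|$ is uniformly bounded on $\mathcal C$. Moreover, $\Dd_t-\Dd_{t_0}$ is a first-order operator whose coefficients are $O(|t-t_0|)$, so $(\Dd_t-\Dd_{t_0})(\Dd_{t_0}-\lambda)^{-1}$ has $L^2$-operator norm $O(|t-t_0|)$ uniformly in $\lambda\in\mathcal C$. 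A Neumann series then shows that $\Dd_t-\lambda$ is invertible for all $(t,\lambda)\in I\times\mathcal C$, where $I$ is a small neighbourhood of $t_0$. Lemma \ref{spectralinv} then gives that the inverses form a smooth family in $\cC^\infty(I\times\mathcal C,\Psi^{-1}(X))$.

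We then define the Riesz projectors
\[
P_t:=\frac{1}{2\pi i}\oint_{\mathcal C}(\lambda-\Dd_t)^{-1}\,\dd\lambda .
\]
By Lemma \ref{integr1var}, $P_t$ is a smooth family in $\Psi^{-1}(X)$. It is in fact smoothing: since $\Dd_t$ is self-adjoint, $P_t$ is the spectral projector onto eigenvalues inside $B_\delta(\lambda_0)$, which has finite rank. Its rank equals the number of eigenvalues of $\Dd_t$ in $B_\delta(\lambda_0)$, counted with multiplicity. This rank is $\Tr P_t$. To apply Lemma \ref{tracesmooth}, which needs order below $-2$, we use the identity $P_t=P_t^3$: by Lemma \ref{composition}, $P_t^3$ is a smooth family of order $-3$. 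Hence $t\mapsto\Tr P_t$ is smooth, in particular continuous. Being integer-valued, it is constant on a connected neighbourhood of $t_0$, so it equals $\Tr P_{t_0}=m_0$.

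The main obstacle is the step showing that $\mathcal C$ stays in the resolvent set for $t$ near $t_0$. It must be done uniformly in $\lambda\in\mathcal C$, and it requires controlling the first-order perturbation $\Dd_t-\Dd_{t_0}$ relative to $\Dd_{t_0}$. Here we identify the spinor bundles via Section \ref{spinbundlesection}, so that all $\Dd_t$ act on one fixed bundle. Alternatively, one can bypass the norm estimate by shrinking $I$ so that the parametrix error in the proof of Lemma \ref{spectralinv} stays small uniformly on the compact set $\{t_0\}\times\mathcal C$.
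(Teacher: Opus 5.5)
Your proposal is correct and follows essentially the same route as the paper. Both use a Neumann-series perturbation to keep the circle in the resolvent set, get a smooth resolvent family from Lemma \ref{spectralinv}, form the Riesz projector via Lemma \ref{integr1var}, and conclude that the integer-valued trace is continuous by Lemma \ref{tracesmooth}, hence constant. Your version makes two points explicit that the paper only asserts: you get uniformity over the compact circle directly (the paper argues ball-by-ball around each $\lambda$), and you use $P_t=P_t^3$ to reach order $-3<-2$ (the paper simply claims $P_t\in\Psi^{-\infty}$).
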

\begin{proof}
Since $\Spec \Dd_{t_0}$ is discrete, there exists $\delta>0$ such that the closure of the open disk $\ball$ contains just the eigenvalue $\lambda_0$. 
We claim that there exists an interval $I$ around $t_0$ such that $\ball \cap \Spec \Dd_t$ contains exactly $m_0$ eigenvalues (not necessarily distinct) for any $t \in I$. Indeed, since the spectrum of $\Dd_{t_0}$ is closed, it follows that for any $\lambda \in \mathcal C_{\delta}(\lambda_0)$ we can find a ball of radius $\epsilon>0$ such that $B(\lambda,\epsilon) \cap \Spec \Dd_{t_0}= \emptyset$. Then $\Dd_{t_0} - z$ is invertible for any $|z-\lambda| < \epsilon$, and we denote by $R_{t_0}(z) \in \Psi^{-1}(X)$ the corresponding resolvent. Notice that
\begin{align*}
R_{t_0}(z) (\Dd_t-z)  = R_{t_0}(z) \lp \Dd_{t_0} - z +\Dd_{t} - \Dd_{t_0} \rp = \Id + R_{t_0}(z) \lp \Dd_{t}  - \Dd_{t_0} \rp .
\end{align*}
But $R_{t_0}(z) : \Ll^2(X) \longrightarrow \Hh^1(X)$ is a bounded operator and $\Dd_t \xrightarrow{t \to t_0} \Dd_{t_0}$ as bounded operators from $\Hh^1(X) \longrightarrow \Ll^2(X)$, thus $\Id + R_{t_0}(z) \lp  \Dd_t-\Dd_{t_0} \rp$ is invertible, and by Lemma \ref{spectralinv}, the inverse is smooth in $t$. Therefore there exists an interval $I$ around $t_0$ such that $\Dd_t-z$ is invertible for any $|z-\lambda|<\epsilon$ and $t \in I$. By Lemma \ref{spectralinv}, the family of resolvents $R_t(z)$ belongs to $\mathcal C^{\infty} \lp I \times B_{\epsilon}(\lambda), \Psi^{-1}(X) \rp$ and is holomorphic in $z$.

Let $P^+_t$ be the spectral projector of $\Dd_t$ on $\ball$ and $P^-_t$ be the spectral projector of $\Dd_t$ on $-\ball$. If $\lambda_0=0$, clearly $P_t^+=P_t^-$. By holomorphic functional calculus, the family of spectral projectors is given by 
\[P_t^+ = \frac{1}{2 \pi i} \int_{\mathcal C_{\delta}(\lambda_0)} R_t(z) dz.\] 
Using Lemma \ref{spectralinv} and \ref{integr1var}, $P_t^+ \in \mathcal C^{\infty} \lp I, \Psi^{-\infty}(X) \rp$, thus by Lemma \ref{tracesmooth}, $t \mapsto \Tr P_t$ is smooth in $t$. Since
\[ \Tr P_t = \dim \Range P_t \in \mathbb N, \]
it follows that the dimension of $\Range P_t$ is actually locally constant for $t \in I$. By shrinking $I$ if necessary, $\ball \cap \Spec \Dd_t$ contains exactly $m_0$ eigenvalues (not necessarily distinct) for any $t \in I$. Furthermore, $\Range P_t$ forms a (trivial) vector bundle of dimension $m_0$ over the connected interval $I$ with the trivialisation given by $P_{t_0}$. 
\end{proof}

We want to study the behavior of the logarithmic derivative of the Selberg zeta function near a pole of type $\frac{1}{2}+i \lambda_0$, $\lambda_0 \in \Spec \Dd_{t_0}$. Recall that the spectrum of the Dirac operator is real and symmetric with respect to zero. 

\begin{proposition}\label{comppoli}
In the context of Lemma \ref{lambda0}, there exists a smooth family of polynomials $Q_t$ in $s$ of degree $2m_0$ such that
\[ \lp t,s \rp \mapsto \frac{Z_t'}{Z_t} (s) - \frac{d}{ds}\log Q_t(s) \in \mathcal C^{\infty} \lp  I, \Hol \lp \tfrac{1}{2} + i \ball \rp \rp.  \]
\end{proposition}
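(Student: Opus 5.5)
Write the eigenvalues of $\Dd_t$ in $\ball$ as $\lambda_1(t),\dots,\lambda_{m_0}(t)$, counted with multiplicity. By symmetry of the spectrum, $-\lambda_1(t),\dots,-\lambda_{m_0}(t)$ are the eigenvalues in $-\ball$. The candidate polynomial is the characteristic polynomial of $\Dd_t^2$ on $\Range(P_t^+)+\Range(P_t^-)$, evaluated at $-(s-\tfrac12)^2$:
\[
Q_t(s):=\prod_{j=1}^{m_0}\lp \lambda_j(t)^2+\lp s-\tfrac12\rp^2\rp ,
\]
which has degree $2m_0$ in $s$. The individual $\lambda_j(t)$ need not be smooth in $t$, so I will not define $Q_t$ through them. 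Instead, the coefficients of $Q_t$ are symmetric functions of $\lambda_j(t)^2$, hence polynomials (Newton identities) in the power sums $\Tr\lp \Dd_t^{2r}P^+_t\rp$, $r=1,\dots,m_0$. By Lemma \ref{lambda0}, $P_t^+\in\cC^\infty(I,\Psi^{-\infty}(X))$. Then $\Dd_t^{2r}P_t^+$ is a smooth family of smoothing operators by Lemma \ref{composition}, so its trace is smooth in $t$ by Lemma \ref{tracesmooth}. Thus $Q_t$ is a smooth family of polynomials. (If $\lambda_0=0$ then $P_t^+=P_t^-$, and $\Dd_t^2$ restricted to $\Range P_t^+$ accounts for all the relevant eigenvalues. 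I would normalise the counting in the same way in both cases, so the degree count may need a small adjustment there.)

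The subtraction is done inside the trace formula \eqref{selcomp1} with $k=0$, evaluated at $s\in\tfrac12+i\ball$ and with $s_0$ chosen with $\Re s_0$ large. The first term $\frac{2s-1}{2s_0-1}\Dlsz$ is smooth by \eqref{derlogsmoothdreapta}. The third term is independent of $t$ and holomorphic on $\tfrac12+i\ball$ after shrinking $\delta$, since its poles lie in $\tfrac12-\nz^*$, which is away from this disc; the only possible exception is the point $s=\tfrac12$ when $\lambda_0=0$, which I would check separately. For the spectral term, set $P_t:=P_t^+$ if $\lambda_0=0$ and $P_t:=P_t^++P_t^-$ otherwise, and split
\[
\cR_t(s)=\cR_t(s)(1-P_t)+\cR_t(s)P_t .
\]
The first piece, $\lp \Dd_t^2+(s-\tfrac12)^2\rp^{-1}$ restricted to $\Range(1-P_t)$, is invertible for $s\in \tfrac12+i\ball$ and $t\in I$. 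It can be written as the resolvent of $\Dd_t^2+P_t$ times $(1-P_t)$, or expressed by a contour integral, so by Lemma \ref{spectralinv} it is a smooth family, holomorphic in $s$. Its difference with $\cR_t(s_0)(1-P_t)$ has order $-3$, and Lemma \ref{tracesmooth} gives a smooth, holomorphic trace. The finite-rank piece has trace
\[
\Tr\lp\cR_t(s)P_t\rp=\sum_j\frac{1}{\lambda_j(t)^2+(s-\frac12)^2},
\]
and multiplying by $\frac{2s-1}{2}$ gives exactly $\frac{\dd}{\dd s}\log Q_t(s)$. The remaining term $\Tr\lp\cR_t(s_0)P_t\rp$ is smooth by the same argument as for the coefficients.

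Summing the pieces, $\frac{Z_t'}{Z_t}(s)-\frac{\dd}{\dd s}\log Q_t(s)$ is a sum of smooth families holomorphic in $s$, which is the claim. I expect the main obstacle to be the bookkeeping around the non-smoothness of the individual eigenvalues. Everything must be expressed through traces of smooth families of finite-rank projections, and the factor $\frac{2s-1}{2}$ must match the doubled spectrum of $\Dd^2$ so that the degree is exactly $2m_0$, including in the degenerate case $\lambda_0=0$.
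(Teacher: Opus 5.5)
For $\lambda_0\neq0$ your argument is correct and is essentially the paper's. You use the same splitting of the spectral term of \eqref{selcomp1} by $P_t=P_t^++P_t^-$, and the same treatment of the $(1-P_t)$ part via $(\Dd_t^2+(s-\frac12)^2+P_t)^{-1}$ and Lemma \ref{spectralinv}. Your $Q_t$ equals, up to the sign $(-1)^{m_0}$, the paper's product of the two determinants $\det(P_t^\pm\Dd_tP_t^\pm+i(s-\frac12)P_t^\pm)$.

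You differ in two respects. You compute the finite-rank piece directly from the eigenvalues, and you get smoothness of the coefficients from Newton's identities applied to $\Tr(\Dd_t^{2r}P_t^+)$, which needs only Lemmas \ref{composition} and \ref{tracesmooth}. The paper instead splits into four terms with $(\Dd_t\pm i(s-\frac12))^{-1}$ and takes determinants on the range bundles. Your route is somewhat more economical.

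Three bookkeeping corrections are needed:
\begin{itemize}
\item Your $Q_t$ is the characteristic polynomial of $\Dd_t^2$ on $\Range P_t^+$ only. On $\Range P_t^++\Range P_t^-$ you would get $Q_t^2$.
\item $\Tr(\cR_t(s)P_t)=2\sum_{j=1}^{m_0}(\lambda_j(t)^2+(s-\frac12)^2)^{-1}$. It is exactly this factor $2$ that gives $\frac{2s-1}{2}\Tr(\cR_t(s)P_t)=\frac{d}{ds}\log Q_t(s)$.
\item \eqref{derlogsmoothdreapta} only yields $\mathcal C^m$ for $\Re s_0>\kappa_m$. As in the paper, you must fix $m$, choose $s_0$ accordingly, and use that the left-hand side does not involve $s_0$.
\end{itemize}

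The case $\lambda_0=0$ is a genuine gap, not just a degree adjustment: your $Q_t$ fails there. Now $P_t=P_t^+$ has rank $m_0$ and there is no doubling, so $\frac{2s-1}{2}\Tr(\cR_t(s)P_t)=\frac12\frac{d}{ds}\log Q_t(s)$. Concretely, $Q_{t_0}(s)=(s-\frac12)^{2m_0}$, whose logarithmic derivative has residue $2m_0$ at $s=\frac12$. But $Z_{t_0}'/Z_{t_0}$ has residue $m_0$ there, since its only singular contribution is $\frac{2s-1}{2}\cdot m_0(s-\frac12)^{-2}$. So the difference keeps a pole.

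The fix uses the symmetry of $\Spec\Dd_t$. The eigenvalues in $B_\delta(0)$ form a symmetric multiset, so $Q_t$ is, up to sign, the square of $\det(\Dd_t-i(s-\frac12))|_{\Range P_t^+}$. This is a polynomial of degree $m_0$, its coefficients are smooth by your own Newton argument applied to $\Tr(\Dd_t^rP_t^+)$, and its logarithmic derivative equals $\frac{2s-1}{2}\Tr(\cR_t(s)P_t^+)$. More uniformly, $\det(\Dd_t-i(s-\frac12))|_{\Range P_t}$ works in both cases and has degree $\operatorname{rank}P_t$.

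The paper's $Q_t^0$ has the same doubling: its two terms with coefficient $\frac1i$ amount to using $P_t^++P_t^-=2P_t^+$. So the degree $2m_0$ in the statement is natural only for $\lambda_0\neq0$. For $\lambda_0=0$ it can be reached only by padding with a factor that does not vanish on the disc.
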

\begin{proof}

Consider $m \geq 0$. Let us denote by $P_t=P_t^++P_t^-$ the spectral projector of $\Dd_t^2$ on $\mathrm{Int} (\ball^2)$. We use equation \eqref{selcomp1} for $\Re s_0 >\kappa_m$, where $\kappa_m$ is the constant from \eqref{derlogsmoothdreapta}. The first and third terms in \eqref{selcomp1} were seen in the proof of Proposition \ref{derlogafarapoli} to belong to $\mathcal C^{m} \lp  I, \Hol \lp \frac{1}{2} + i \ball \rp \rp$. Since $P_t$ is an operator of finite rank for every $t$ we rewrite the spectral term from \eqref{selcomp1} as 
\begin{equation}\label{termenul2}
\begin{aligned}
\lp s-\tfrac{1}{2} \rp \Tr \left[  \cR_t(s)-\cR_t(s_0)   \right]={}& \lp s-\tfrac{1}{2} \rp \Tr \left[ \lp 1- P_t \rp \left[  \cR_t(s)-\cR_t(s_0)   \right] \right] \\{}&+  \lp s-\tfrac{1}{2} \rp \Tr \left[ P_t  \cR_t(s)\right]
\\{}&- \lp s-\tfrac{1}{2} \rp \Tr \left[ P_t  \cR_t(s_0)\right].
\end{aligned}
\end{equation}
Clearly $\lp 1- P_t \rp   \cR_t(s)= \lp 1-P_t \rp  \lp \Dd_t^2 + \lp s-\tfrac{1}{2} \rp^2 + P_t \rp^{-1}
$.
Since $\Dd_t^2 + \lp s-\frac{1}{2} \rp^2 + P_t$ is invertible and holomorphic in $s$ for $t\in I$ and $s\in \tfrac{1}{2}+ i B_{\delta}(\lambda_0)$, by Lemma \ref{spectralinv} its inverse belongs to $\mathcal C^{\infty} \lp I \times \lp \tfrac{1}{2}+ i B_{\delta}(\lambda_0)  \rp , \Psi^{-2}(X) \rp$ and is holomorphic in $s$. Therefore the first term in equation \eqref{termenul2} extends holomorphically for $s \in \frac{1}{2}+i \ball$, so it belongs to $\mathcal C^{\infty} \lp I, \Hol \lp \tfrac{1}{2}+ i B_{\delta}(\lambda_0)  \rp  \rp$. 

Since $\Re s_0> \kappa_m>\frac{1}{2}$, it follows that $\Dd_t^2 + \lp s_0-\tfrac{1}{2} \rp^2$ is invertible, thus by Lemma \ref{spectralinv}, the inverse family is smooth in $t$. The family of spectral projectors is also smooth in $t$, therefore the term $(s-\tfrac{1}{2}) \Tr \left[ P_t   \cR_t(s_0) \right]$ is smooth in $t$ and clearly holomorphic for $s \in \tfrac{1}{2}+i \ball$.

Furthermore, the second term $ \lp s-\tfrac{1}{2} \rp \Tr \left[ P_t \lp D_t^2 + \lp s-\tfrac{1}{2} \rp^2  \rp^{-1} \right]$ can be decomposed as
\begin{align*}
{}& \frac{1}{2i} \Tr \left[ \lp P_t^+ + P_t^- \rp  \lp \Dd_t - i \lp s-\tfrac{1}{2} \rp \rp^{-1} \right] - \frac{1}{2i}\Tr \left[ \lp P_t^+ + P_t^- \rp \lp \Dd_t + i \lp s-\tfrac{1}{2} \rp \rp^{-1}  \right]
\end{align*}
that we write as the sum of the following four terms
\begin{equation}\label{4termeni}
\begin{aligned}
    {}& \frac{1}{2i} \Tr \left[ P_t^+ \lp \Dd_t - i \lp s-\tfrac{1}{2} \rp \rp^{-1} \right], \frac{1}{2i} \Tr \left[ P_t^- \lp \Dd_t - i \lp s-\tfrac{1}{2} \rp \rp^{-1} \right] , \\
    {}&\frac{1}{2i} \Tr \left[ P_t^+ \lp \Dd_t + i \lp s-\tfrac{1}{2} \rp \rp^{-1} \right], \frac{1}{2i} \Tr \left[ P_t^- \lp \Dd_t + i \lp s-\tfrac{1}{2} \rp \rp^{-1} \right].
\end{aligned}
\end{equation}
Let us first treat the case when $\lambda_0 \neq 0$. By possibly reducing $\delta$, we may assume that
\begin{equation}\label{ballmic}
\ball \cap -\ball =\emptyset.
\end{equation} 
The first two terms in \eqref{4termeni} are smooth in $t$ and we claim that they extend holomorphically for $s \in \frac{1}{2}+i \ball$. Indeed, using \eqref{ballmic}, we obtain that $P_t^+ P_t^- =0$, hence for any $s \in \frac{1}{2}+i B_{\delta}(\lambda_0) \setminus \left\{ \tfrac{1}{2} +i \lambda_0 \right\}$ the following equality holds
\[ P_t^+ \lp \Dd_t - i \lp s-\tfrac{1}{2} \rp \rp^{-1}  = P_t^+ \lp \Dd_t + P_t^- - i \lp s-\tfrac{1}{2} \rp \rp^{-1}. \] 
Notice that the RHS is holomorphic for $s \in \frac{1}{2}+i \ball$, which proves the claim for the first term in \eqref{4termeni}. The second one $\frac{1}{2i}\Tr \left[ P_t^- \lp \Dd_t - i \lp s-\tfrac{1}{2} \rp \rp^{-1} \right]$ can be treated in a similar manner.

Now the third term in \eqref{4termeni} is given by
\begin{equation}\label{dlpol}
\begin{aligned}
\frac{1}{2i}\Tr P_t^+ \lp \Dd_t + i \lp s-\tfrac{1}{2} \rp \rp^{-1} ={}& \frac{1}{2i} \Tr_{\vert_{\Range P_t^+}} \left[ \lp P_t^+ D_t P_t^+ + i \lp s-\tfrac{1}{2} \rp P_t^+ \rp_{\vert_{\Range P_t^+}}  \right]^{-1} \\
={}& \frac{1}{2} \frac{d}{ds} \log \det \lp P_t^+ D_t P_t^+ + i \lp s-\tfrac{1}{2} \rp P_t^+ \rp_{\vert_{\Range P_t^+}}.
\end{aligned}
\end{equation}
The last equality holds true because for any matrix $A \in \mathcal M_k (\mathbb C)$, we have the equality $\Tr (A-\lambda)^{-1} = - \frac{d}{d \lambda} \log \det (A- \lambda)$, and we apply it for 
\[A = \lp P_t^+ D_t P_t^+ + i \lp s-\tfrac{1}{2} \rp P_t^+ \rp_{\vert_{\Range P_t^+} } \in \End \lp \Range P_t^+ \rp.\] 
We have a similar argument for the last term $ \tfrac{1}{2i}\Tr \left[ P_t^- \lp \Dd_t + i \lp s-\tfrac{1}{2} \rp \rp^{-1} \right]$ in \eqref{4termeni}, hence for the case when $\lambda_0 \neq 0$, the sum of the last two terms in \eqref{4termeni} is $\frac{1}{2}$ multiplied by the logarithmic derivative of the polynomial 
\[Q_t(s):=  \det \lp P_t^+ D_t P_t^+ + i \lp s-\tfrac{1}{2} \rp P_t^+ \rp_{\vert_{\Range P_t^+}}  \cdot \det \lp P_t^- D_t P_t^- + i \lp s-\tfrac{1}{2} \rp P_t^- \rp_{\vert_{\Range P_t^-}} \] 
of degree at most $2m_0$ in $s$ with coefficients which are smooth functions of $t$. 

Suppose now that $\lambda_0=0$, thus $P_t^+=P_t^-$. Then instead of the four terms in \eqref{4termeni}, we have:
\begin{equation}\label{2termeni}
\begin{aligned}
    {}& \frac{1}{i} \Tr \left[ P_t^+ \lp \Dd_t - i \lp s-\tfrac{1}{2} \rp \rp^{-1} \right], && \frac{1}{i} \Tr \left[ P_t^+ \lp \Dd_t + i \lp s-\tfrac{1}{2} \rp \rp^{-1} \right],
\end{aligned}
\end{equation}
and both terms can be treated as in \eqref{dlpol}. More precisely, their sum is the logarithmic derivative of 
\begin{align*}
Q_t^0(s)={}&  \det \lp  P_t^+ D_t P_t^+ - i \lp s-\tfrac{1}{2} \rp P_t^+  \rp_{\vert_{\Range P_t^+}} \cdot \det \lp P_t^+ D_t P_t^+ + i \lp s-\tfrac{1}{2} \rp P_t^+ \rp_{\vert_{\Range P_t^+}} . 
\end{align*} 
Therefore we proved that if we subtract from the logarithmic derivative of the Selberg zeta function the logarithmic derivative of $Q_t(s)$, or $Q_t^0(s)$ if $\lambda_0=0$, we obtain an element of $\mathcal C^{m} \lp I, \Hol \lp \frac{1}{2} + i \ball \rp \rp$. Since $m$ was chosen arbitrarily, the conclusion follows.
\end{proof}

\begin{theorem}\label{smdepz}
For every smooth family of hyperbolic metrics on a compact oriented surface $X$ and for every spin structure, 
the family of Selberg zeta functions $\{Z_t(s)\}_{t\in\cM}$ defined in \eqref{szf} belongs to $\mathcal C^{\infty} \lp \cM, \Hol (\mathbb C) \rp$. 
\end{theorem}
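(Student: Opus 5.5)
The plan is to pass from the logarithmic derivative, which we already control locally, to $Z_t$ itself by integrating in $s$ and exponentiating, using the polynomials $Q_t$ of Proposition \ref{comppoli} to absorb the poles. Smoothness in $\mathcal C^\infty(\cM,\Hol(\cz))$ is a local property in $(t,s)$, so it suffices to work on $I\times V$, where $I$ is a small neighborhood of a fixed $t_0\in\cM$ and $V$ is a bounded disc in $\cz$.

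First, fix a base point $s_1$ with $\Re s_1>\kappa_1$. By Lemma \ref{zetadreapta}, for $\Re s_1\gg 0$ the function $t\mapsto\log Z_t(s_1)$ is $\mathcal C^m$ on $I$ for every $m$, hence smooth. Pick a large disc $V$ that contains $s_1$ and the target region.

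Next, locate the singularities of $Z_t'/Z_t$ inside $V$. The set $\tfrac12+i\Spec \Dd_{t_0}$ meets $\overline V$ in finitely many points $\tfrac12+i\lambda_0$. The poles $-\nz^*+\tfrac12$ coming from the third term of \eqref{selcomp1} are handled the same way: they are $t$-independent with integer residues (by \cite{rares}, the total residues of $Z'/Z$ are integers). I will absorb them into a fixed polynomial factor, or note that they cancel against the spectral side, as in the holomorphy argument of \cite{rares}.

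For each $\lambda_0$, apply Lemma \ref{lambda0} and Proposition \ref{comppoli}, shrinking $I$ finitely many times. On the complement of these small balls, $\Spec \Dd_t$ stays away for $t\in I$ by the same Neumann-series argument, so Proposition \ref{derlogafarapoli} applies there. Multiply the local polynomials into a single smooth family $Q_t(s)$. Then
\[
F_t(s):=\frac{Z_t'}{Z_t}(s)-\frac12\frac{\dd}{\dd s}\log Q_t(s)\in\mathcal C^\infty(I,\Hol(V)).
\]
The patching is fine because the local remainders differ by $\frac{\dd}{\dd s}\log$ of factors that are holomorphic and nonvanishing on each overlap.

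Now integrate along paths in the simply connected set $V$:
\[
Z_t(s)=Z_t(s_1)\left(\frac{Q_t(s)}{Q_t(s_1)}\right)^{1/2}\exp\int_{s_1}^s F_t(\sigma)\,\dd\sigma .
\]
Integration in $s$ preserves $\mathcal C^\infty(I,\Hol(V))$, and so does the exponential.

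The main obstacle is the square root of $Q_t$. The conclusion needs each local factor to be an honest square of a smooth family of polynomials. In the case $\lambda_0\neq0$, the symmetry $\Dd\mapsto-\Dd$ of the spectrum (via the chirality grading) identifies the two determinants in $Q_t$ up to $s\mapsto 1-s$. This looks like what is needed, but it has to be checked carefully. I would first try to show instead that the residues of $Z_t'/Z_t$ at $\tfrac12+i\lambda$ equal the multiplicity of $\lambda$, which is the fact from \cite{rares} that gives holomorphy of $Z$. If that works, $Z_t/\widetilde Q_t$ is holomorphic and nonvanishing, with $\widetilde Q_t(s):=\det\big(P_t^+D_tP_t^++i(s-\tfrac12)P_t^+\big)$ restricted to $\Range P_t^+$, suitably paired. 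This smooth family has exactly the right zeros, and the formula above then holds with $\widetilde Q_t$ in place of $Q_t^{1/2}$. That proves $Z\in\mathcal C^\infty(I,\Hol(V))$ and hence the theorem.
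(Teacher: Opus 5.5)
Your architecture is the paper's: integrate $Z_t'/Z_t$ from a base point in the half-plane of absolute convergence, exponentiate, and remove the poles at $\tfrac12+i\lambda_0$ with the smooth polynomial families of Proposition \ref{comppoli}. Two of your variations are legitimate. Working on one simply connected disc makes path-independence automatic, so you do not need the integrality of residues for it. Absorbing the trivial zeros at $\tfrac12-n$ into the fixed polynomial $\prod_n(s-\tfrac12+n)^{4(g-1)n}$ also works, because the third term of \eqref{selcomp1} is $t$-independent with these integer residues, and it is simpler than the paper's functional-equation argument. Drop the alternative that these poles ``cancel against the spectral side'': on a closed surface the spectral term has poles only on $\Re s=\tfrac12$. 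Two steps, however, fail as written.

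\emph{The base point.} Lemma \ref{zetadreapta} gives $\mathcal C^m$ regularity only on $\{\Re s>\cgot_m\}$. The threshold $\cgot_m$ depends on $m$ through the bounds $e^{B_j l_t(\mu)}$ on $t$-derivatives of lengths, and no bound uniform in $m$ is available. So no fixed $s_1$ gives smoothness of $t\mapsto Z_t(s_1)$. You need the paper's device: fix $m$, take $\Re s_0>\cgot_m$, and conclude $\mathcal C^m$ on the target region, which is legitimate because the resulting formula for $Z_t(s)$ is independent of the base point; then let $m$ vary. Alternatively, connect $s_1$ to such a point by integrating $Z_t'/Z_t$, which is smooth on $\{\Re s>1\}$ by Proposition \ref{derlogafarapoli}.

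\emph{The pole correction.} Your displayed claim $Z_t'/Z_t-\tfrac12\frac{\dd}{\dd s}\log Q_t\in\mathcal C^\infty(I,\Hol(V))$ is false for $\lambda_0\neq0$. As your $s\mapsto 1-s$ remark shows, only the $P_t^+$-factor of $Q_t$ vanishes near $\tfrac12+i\lambda_0$. Hence $\tfrac12\frac{\dd}{\dd s}\log Q_t$ has total residue $m_0/2$ there. In contrast, the term $(s-\tfrac12)\Tr(\Dd_t^2+(s-\tfrac12)^2)^{-1}$ gives $Z_t'/Z_t$ residue equal to the multiplicity of $\lambda$ at each $\tfrac12+i\lambda$, for a total of $m_0$. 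For $\lambda_0\neq0$, Proposition \ref{comppoli} as stated subtracts the full $\frac{\dd}{\dd s}\log Q_t$. The paper then writes $Z_t(s)=Z_t(s_0)\,h(t,s)\,Q_t(s)/Q_t(s_0)$ with $h=\exp\int f$, and no square root ever appears.

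Your fallback $\widetilde Q_t$ is the right object, and the cleanest one. For every $\lambda_0$ its zeros are exactly $\tfrac12+i\lambda$, $\lambda\in\Spec\Dd_t\cap B_\delta(\lambda_0)$, with multiplicity. For $\lambda_0=0$ the two factors of $Q_t^0$ agree up to sign, so $Q_t^0=\pm\widetilde Q_t^{\,2}$. There the residue count calls for $\tfrac12\frac{\dd}{\dd s}\log Q_t^0=\frac{\dd}{\dd s}\log\widetilde Q_t$, and $\widetilde Q_t$ is exactly the polynomial square root you were looking for.

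However, matching residues only shows that $Z_t/\widetilde Q_t$ is holomorphic and nonvanishing for each fixed $t$. It says nothing about regularity in $t$, which is the whole point. You still need
\[
Z_t'/Z_t-\tfrac{\dd}{\dd s}\log\widetilde Q_t\in\mathcal C^\infty\big(I,\Hol(\tfrac12+iB_\delta(\lambda_0))\big).
\]
This follows, uniformly in $\lambda_0$, from Cauchy's integral formula over a circle around $\tfrac12+i\lambda_0$. Choose the circle inside an annulus that avoids $\tfrac12+i\Spec\Dd_t$ for all $t\in I$; such an annulus exists by applying Lemma \ref{lambda0} with two radii. On that annulus the function is jointly smooth by Proposition \ref{derlogafarapoli}, together with the smoothness and non-vanishing of $\widetilde Q_t$ there. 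For $\lambda_0\neq0$ it also follows directly from Proposition \ref{comppoli}, since the $P_t^-$-factor of $Q_t$ is smooth and nonvanishing near $\tfrac12+i\lambda_0$. With these two repairs your argument goes through.
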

\begin{proof}
Let $m \in \mathbb N$, $t_0>0$, and $s_0 \in \mathbb C$ such that $\Re s_0 > \cgot_m$, where $\cgot_m$ is the constant from Lemma \ref{zetadreapta}. Furthermore, let $R>0$ so that $s_0 \in B_R(0)$. Denote by $\mathcal P$ the set of poles of the function $z \mapsto \frac{Z_{t_0}'}{Z_{t_0}}(z)$. We will prove the statement using different strategies for the region $\mathbb C \setminus \mathcal P$, near poles of the form $\frac{1}{2} + i \lambda_0$ with $\lambda_0 \in \Spec \Dd_{t_0}$, and near poles of the form $-n+\frac{1}{2}$, where $n \in \mathbb N$. Let us first consider $s \in \mathbb C \setminus \mathcal P$. Then there exists $\delta>0$ such that $s \notin B_{\delta}(p)$, for any pole $p \in \mathcal P$. By \cite[Theorem 1.2]{pfaffle}, there exists an interval $I$ around $t_0$ such that for any $t \in I$, the poles of the function $z \mapsto \frac{Z_t'}{Z_t}(z)$ are included in $\cup_{p \in \mathcal P} B_{\delta}(p)$. In particular, $s$ is not a pole of $z \mapsto \frac{Z_t'}{Z_t}(z)$ for any $t \in I$.  

Let \(\eta(s):[0,1]\to\mathbb{C}\) be the path from $s_0$ to $s$ consisting of straight line segments together, if needed, with trigonometric arcs along the circles of radius $\delta$ centered at the points in $\mathcal P$. Then $\eta(s)$ avoids the poles of $z \mapsto \frac{Z_{t}'}{Z_{t}}(z)$ for any $t \in I$. Thus using Proposition \ref{derlogafarapoli} and the fact that the path $\eta(s) \subset B_R(0)$ has finite length, we get the well-defined function
\[  s \mapsto \int_{\eta(s)} \frac{Z_t'(w)}{Z_t(w)}dw . \]

For any $t \in I$, the residues of the function $z \mapsto \frac{Z_t'}{Z_t}(z)$ are integers (see e.g. \cite[Theorem 29]{rares}), thus if $\mu$ is another path between $s_0$ and $s$, it follows that 
\[\int_{\mu} \frac{Z_t'(w)}{Z_t(w)}dw-\int_{\eta(s)} \frac{Z_t'(w)}{Z_t(w)}dw \in 2 \pi i \mathbb Z .\]

Therefore the function $s \mapsto e^{\int_{\eta(s)} \frac{Z_t'}{Z_t} (w) dw}$ is well-defined (it does not depend on the chosen path between $s_0$ and $s$, so we can compute it using $\eta(s)$), and by Lemma \ref{derlogafarapoli}, it belongs to $\mathcal C^{\infty}  \lp I, \Hol \lp \mathbb C \setminus \cup_{p \in \mathcal P} B_{\delta}(p) \rp \rp$. Since $\Re s_0 > \cgot_m$, using Lemma \ref{zetadreapta}, it follows that $Z_t(s_0)$ is of class $\mathcal C^m$ in $t$, thus we conclude that
\[  Z_t(s)=Z_t(s_0) e^{\int_{\eta(s)} \frac{Z_t'}{Z_t} (w) dw}  \in \mathcal C^m \lp I, \Hol (\mathbb C \setminus \cup_{p \in \mathcal P} B_{\delta}(p)) \rp.  \]
Since $m$ was arbitrarily chosen, we obtain $ Z_t(s) \in \mathcal C^{\infty} \lp I, \Hol (\mathbb C \setminus  \cup_{p \in \mathcal P} B_{\delta}(p)) \rp$.

Now let us study the behavior of the family of Selberg zeta functions at a pole of the form $\tfrac{1}{2}+i\lambda_0 \in \mathcal P$, $\lambda_0 \in \Spec \Dd_{t_0}$. Shrinking $\delta$ if necessary, by Proposition \ref{comppoli} we have that
\begin{equation}\label{ecuatiedeintexp}
f(t,s) := \frac{Z_t'}{Z_t} (s) - \frac{d}{ds} \log Q_t(s) \in \mathcal C^{\infty} \lp I, \Hol \lp \tfrac{1}{2}+ i B_{\delta}(\lambda_0 ) \rp \rp.  
\end{equation} 

Notice that, for any $t\in I$, the function $s \mapsto f(t,s)$ is holomorphic on $B_R(0) \setminus \cup_{p \in \mathcal P \setminus \left\{ \tfrac{1}{2} + i \lambda_0 \right\} } B_{\delta}(p)$. By integrating and exponentiating \eqref{ecuatiedeintexp}, we get
\[ h (t,s): = e^{\int_{\eta(s)} f(t,w)dw} \in \mathcal C^{\infty}\lp  I, \Hol \lp \tfrac{1}{2}+ i B_{\delta}(\lambda_0) \rp \rp,  \]
and moreover, $ h(t,s) =  \frac{Z_t(s)}{Z_t(s_0)} \cdot \frac{Q_t(s_0)}{Q_t(s)}$ on $\frac{1}{2}+ i B_{\delta}(\lambda_0) \setminus \left\{ \tfrac{1}{2} + i \lambda_0 \right\}$. Therefore
\[ Z_t(s) = Z_t(s_0) h(t,s) \frac{Q_t(s)}{Q_t(s_0)} \]
on $\frac{1}{2}+ i B_{\delta}(\lambda_0) \setminus \left\{ \frac{1}{2} + i \lambda_0 \right\}$. Both terms are regular at $s=\frac{1}{2} + i \lambda_0$, and by continuity their values at $s=\frac{1}{2} + i \lambda_0$ coincide. Remark that $Z_t(s_0)$ is of class $\mathcal C^m$ in $t$; that $h(t,s) \in \mathcal C^{\infty} \lp I, \Hol \lp \frac{1}{2} + i B_{\delta}(\lambda_0)  \rp \rp$; and, by Proposition \ref{comppoli}, $Q_t(s_0)$ does not vanish. Therefore 
\[ Z_t(s) \in \mathcal C^{m} \lp I, \Hol \lp \tfrac{1}{2} + i B_{\delta}(\lambda_0)  \rp \rp.\]
Since $m$ was arbitrary, the conclusion also follows around the poles of the form $\frac{1}{2} + i \lambda_0$, $\lambda_0 \in \Spec \Dd_{t_0}$.

Finally, we study the behavior of the family of Selberg zeta functions at a pole of the form $-n+\frac{1}{2} \in \mathcal P$ with $n \in \mathbb N$. By equation \eqref{selcomp1} (and \cite[Lemma 28]{rares}), we have that 
\begin{align*}
\frac{Z'_t}{Z_t}(s) +  \frac{Z'_t}{Z_t}(1-s) ={}& -\frac{\Area X}{4 \pi} (2s-1) \lp -\pi \tan (\pi s) + \pi \tan (\pi-\pi s) \rp    \\
={}& \Area X \lp s- \tfrac{1}{2} \rp \tan (\pi s)=:\varphi(s).
\end{align*} 
Notice that $s \mapsto \varphi(s)$ is a meromorphic function with simple poles at $\frac{1}{2} + \mathbb Z$, and residues given by $\Res_{s=\frac{1}{2}+k} \varphi(s) = -4(g-1)k$, $k \in \mathbb Z$. Thus for $k \leq -1$, the residues are positive integers. By integrating, we get
\[ \log \frac{Z_t(s)}{Z_t(1-s)}  = \int_{1/2}^{s} \varphi(w)dw, \]
which is well-defined modulo $2 \pi i$. Furthermore, by exponentiating, we obtain that
\[ \frac{Z_t(s)}{Z_t(1-s)}=e^{\log  \frac{Z_t(s)}{Z_t(1-s)}}  =e^{\int_{1/2}^{s} \varphi(w)dw},  \]
where the RHS is a meromorphic function with singularities (poles and zeros) at the poles of $\varphi$. Therefore we get the following equality of meromorphic functions:
\begin{equation}
Z_t(s)= Z_t(1-s) e^{\int_{1/2}^{s} \varphi(w)dw}.
\end{equation}
Notice that $Z_t(1-s)\in \mathcal C^{\infty} \lp I, \Hol \lp B_{\delta} \lp-n+\frac{1}{2} \rp \rp  \rp$, and $e^{\int_{1/2}^{s} \varphi(w)dw}$ is independent of $t$ and has a zero of order $\Res_{s=-n+\frac{1}{2}} \varphi$ at $-n+\frac{1}{2}$. Therefore 
\[ Z_t(s)  \in  \mathcal C^{\infty} \lp I, \Hol \lp B_{\delta} \lp -n+\tfrac{1}{2} \rp \rp  \rp, \]
concluding the proof.
\end{proof}

Theorem \ref{thmain1} follows directly from this result and from the existence of local \emph{slices} i.e., smooth sections, in the $\Diff^0$-principal fibration of the manifold of hyperbolic metrics above $\cT_g$.

Notice that exactly the same proof shows that the Selberg Zeta function for the Laplacian is smooth on the Riemann moduli space $\cM_g$. 

\section{Behaviour of the logarithmic derivative of the Selberg zeta function in the pinching process}
Suppose that our spin compact oriented hyperbolic surface $X$ goes through a pinching process of $k$ mutually disjoint simple closed geodesics as in Definition \ref{pinchingproc}, thus the limit surface corresponding to $t=0$ has $2k$ cusps.

\begin{proposition}\label{atomi}
Let $\lambda \in \Spec \Dd_0$ and $\epsilon>0$ such that 
$ [\lambda-\epsilon,\lambda+\epsilon] \cap \Spec \Dd_{0}=\{ \lambda \}.  $
Let $P^t_\lambda$ be the spectral projector of $\Dd_t$ on $[\lambda-\epsilon,\lambda+\epsilon]$. Then there exists $t_{\lambda}>0$ so that the rational function in $s$
\[ Q^{\lambda}_{t}(s) :=  \frac{ \det \left[ P^t_{\lambda} \Dd_t^2 P^t_{\lambda}+ \lp s-\tfrac{1}{2}\rp^2 P^t_{\lambda} \right]_{\vert_{\Range P^t_{\lambda}}}}{ \det \left[ P^0_\lambda \Dd_0^2 P^0_\lambda+ \lp s-\tfrac{1}{2}\rp^2 P^0_\lambda \right]_{\vert_{\Range P^0_\lambda}}}  \]
has smooth coefficients in $t \in [0, t_{\lambda})$.
\end{proposition}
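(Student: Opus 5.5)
The plan is to express numerator and denominator of $Q^\lambda_t$ through the spectral data of the finite-rank operator $A_t:=P^t_\lambda \Dd_t^2 P^t_\lambda$ restricted to $\Range P^t_\lambda$. The denominator is a fixed polynomial in $s$, independent of $t$, equal to $\lp \lambda^2+(s-\tfrac12)^2\rp^{m}$ with $m=\operatorname{rank}P^0_\lambda$. So it suffices to show that the numerator is a polynomial in $s$ of degree $2m$ whose coefficients are smooth in $t\in[0,t_\lambda)$. By \eqref{prspectralicalcul}, the family of projectors lies in $\Psi^{-\infty,-\infty,0}_{\cp}(X)$. Combined with \eqref{diracincalc} and the composition lemma for cusp-surgery families, this gives $P^t_\lambda \Dd_t^k P^t_\lambda\in \Psi^{-\infty,-\infty,0}_{\cp}(X)$ for every $k$. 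The same holds for all products of such operators.

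First, the rank of $P^t_\lambda$ is constant. By Proposition \ref{tracecp}, with $\alpha=\beta=0$ after shrinking orders, $\Tr P^t_\lambda\in C^\infty([0,t_\lambda))$. Since it is integer valued, it equals $m$ on a small interval $[0,t_\lambda)$.

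Next, expand the determinant via Newton identities:
\[
\det\lp A_t+\mu\rp_{\vert_{\Range P^t_\lambda}}=\sum_{j=0}^m \mu^{m-j} e_j(A_t),
\]
with $\mu=(s-\tfrac12)^2$. Each elementary symmetric function $e_j(A_t)$ is a universal polynomial, with rational coefficients, in the power traces $\Tr\lp (P^t_\lambda\Dd_t^2P^t_\lambda)^q\rp$ for $q\le m$. These power traces equal $\Tr\lp P^t_\lambda \Dd_t^{2q}P^t_\lambda\rp$, because $P^t_\lambda$ commutes with $\Dd_t$. Each such trace is the trace of an element of $\Psi^{-\infty,-\infty,0}_{\cp}(X)$.

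The main obstacle is then to invoke Proposition \ref{tracecp} at $\alpha$ very negative and $\beta=0$ without producing a logarithm. Here the trace lies in $t^{0}C^\infty+t^{-\alpha}\log t\,C^\infty$ for every $\alpha\le0$, since the index set at $\ffc$ is empty. I would argue directly that the rapid vanishing at $\ffc$ (index set $\emptyset$) removes the $\ffc$ contribution entirely in the pushforward theorem. The push-forward of the density on $\Xs$ then has index set $\nz$ at $t=0$, so the trace is genuinely smooth. Alternatively, since $-\alpha$ can be chosen arbitrarily large, the statement holds modulo $t^N\log t$ for every $N$. To conclude full smoothness, I would combine this with smoothness for $t>0$ (Lemma \ref{tracesmooth}) and the uniform derivative bounds coming from the pushforward theorem. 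With that established, the coefficients of the numerator are smooth on $[0,t_\lambda)$. Dividing by the fixed $t$-independent denominator gives a rational function in $s$ with smooth coefficients. At $t=0$ it reduces to $1$, using the fact that $P^0_\lambda\Dd_0^2P^0_\lambda=\lambda^2P^0_\lambda$.
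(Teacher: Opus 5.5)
Your argument is correct and is essentially the paper's. The paper's proof is a one-line appeal to the smoothness of $P^t_\lambda$ in $\Psi^{-\infty,-\infty,0}_{\cp}(X)$ from \eqref{prspectralicalcul}. Your route makes that one-liner precise: constancy of the rank, the fixed denominator $(\lambda^2+(s-\tfrac12)^2)^m$, Newton's identities, and traces of $P^t_\lambda\Dd_t^{2q}P^t_\lambda$ via Proposition \ref{tracecp}.

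Two small corrections. First, Proposition \ref{tracecp} with $\alpha=\beta=0$ literally yields a $\log t$ term, so for the rank step you should take some $\alpha<0$; continuity then suffices for an integer-valued function. Second, your ``main obstacle'' disappears without any pushforward derivative bounds, since $t^N\log t\,\cC^\infty([0,t_0))\subset\cC^{N-1}([0,t_0))$, and intersecting over all $N$ gives smoothness.
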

\begin{proof}
The conclusion follows directly from the smooth dependence of the spectral projections on the parameter $t$ (see \eqref{prspectralicalcul}) which was established in \cite{cipriana}. 
\end{proof}

For $R>0$ and $t_R$ the minimum of $t_{\lambda}$ for $\lambda \in \Spec \Dd_{0}$ and $|\tfrac{1}{2}+i \lambda | <R$, define 
\begin{equation}\label{QtR}
 Q_{t,R}(s)=\prod_{\substack{\lambda \in \Spec \Dd_0 \\
\tfrac{1}{2}+i \lambda \in B_R(0)}} Q^{\lambda}_{t}(s). 
\end{equation}
It is a rational function of degree $0$ with smooth coefficients in $t \in [0, t_R)$. Furthermore, when $t=0$, $Q_{0,R}(s)$ equals $1$. 

We denote by $\mathcal C_R(0)$ the circle of radius $R$ centered at the origin, and by $B_R(0)$ the corresponding open disk of radius $R$ centered at $0$.

\begin{proof}[Proof of Theorem \ref{mainth}]
By \cite[Theorem 34]{rares}, we know that
\begin{equation}\label{limitarares}
\lim_{t \to 0} \Dl + 2k \log 2 = \lp \frac{Z'_0}{Z_0}\rp(s), 
\end{equation}
uniformly on compacts sets in $\mathbb C$ avoiding the poles of the limit function. Let us study $\frac{d}{ds}\left[\frac{1}{2s-1} \lp \Dl + 2k \log 2 \rp \right]$ using the Selberg trace formula from Section \ref{selbergderlog} for the compact case. 

Add $2k\log 2$ to the functional equation \eqref{selcomp1} and then divide by $2s-1$. We get for $t>0$ and $\Re s_0 > 1$ the identity
\begin{align*}
\frac{1}{2s-1} \left[  2k\log 2 + \Dl  \right]{}&= \frac{2k \log 2}{2s-1} + \frac{1}{2s_0-1} \Dlsz  \\ {}&+ \frac{1}{2} \Tr \left[  \lp \Dd_t^2 + \lp s-\tfrac{1}{2} \rp^2 \rp^{-1}   -   \lp \Dd_t^2 + \lp s_0-\tfrac{1}{2} \rp^2 \rp^{-1}   \right]  \\
{}& - \frac{\Area X}{ 4 \pi} \int_{\mathbb R} \xi \coth (\pi \xi) \lp \frac{1}{\xi^2 + (s-\tfrac{1}{2})^2 }  -  \frac{1}{\xi^2 + (s_0-\tfrac{1}{2})^2 }     \rp d\xi .
\end{align*}

We now differentiate with respect to $s$:
\begin{equation}\label{derzeta1}
\begin{aligned}
\lefteqn{\frac{d}{ds}\left[\frac{1}{2s-1} \left[\lp 2k \log 2 + \frac{Z'_t}{Z_t}  \rp (s) \right] \right] =}\\
={}&  \frac{-4k \log 2}{(2s-1)^2} 
+ \frac{1}{2} \frac{d}{ds}  \Tr \left[  \lp \Dd_t^2 + \lp s-\tfrac{1}{2} \rp^2 \rp^{-1}   -   \lp \Dd_t^2 + \lp s_0-\tfrac{1}{2} \rp^2 \rp^{-1}   \right] 
-\frac{d}{ds} F(s), 
\end{aligned}
\end{equation}
where $F(s)$ is a meromorphic function independent of $t$:
\begin{align*}
F(s):={}& \frac{\Area X}{ 4 \pi} \int_{\mathbb R} \xi \coth (\pi \xi) \lp \frac{1}{\xi^2 + (s-\tfrac{1}{2})^2 }  -  \frac{1}{\xi^2 + (s_0-\tfrac{1}{2})^2 }     \rp d\xi  \\
={}& - \frac{\Area X}{2 \pi} \left[ \frac{1}{2 s_0-1} - \frac{1}{2s-1} + \sum_{n=1}^{\infty} \lp \frac{1}{s_0-\tfrac{1}{2} +n} - \frac{1}{s-\tfrac{1}{2} +n}   \rp \right],
\end{align*} 
and the last equality holds by \cite[Lemma 28]{rares}.
Remark that the second term in the RHS of \eqref{derzeta1} is given by:
\begin{align*}
 -  \lp s- \tfrac{1}{2} \rp \Tr \left[ \Dd_t^2 + \lp s - \tfrac{1}{2} \rp^2  \right]^{-2} =- \lp s-\tfrac{1}{2} \rp \Tr \Rr_{\Dd_t^2}^2 \lp - \lp s-\tfrac{1}{2} \rp^2 \rp.
\end{align*}
Therefore equation \eqref{derzeta1} becomes
\begin{equation}\label{derzetafinal}
\begin{aligned}
\frac{d}{ds}\left[\frac{1}{2s-1} \left[ 2k \log 2 +\Dl \right] \right] ={}&  \frac{-4k \log 2}{(2s-1)^2} - \lp s-\tfrac{1}{2} \rp \Tr \Rr_{\Dd_t^2}^2 \lp - \lp s-\tfrac{1}{2} \rp^2 \rp \\
{}&- \frac{d}{ds} F(s).
\end{aligned}
\end{equation}

Now let us study how the operations $\frac{d}{ds}$ and $\frac{1}{2s-1}$ act on the RHS of the limit \eqref{limitarares}. By \cite[Equation $(17)$]{rares}, we have
\begin{align*}
\lp \frac{Z'_0}{Z_0}  \rp (s) ={}&  2k \log 2 \lp 1- \frac{2s-1}{2s_0-1} \rp+ \frac{2s-1}{2s_0-1} \lp  \frac{Z'_0}{Z_0} \rp(s_0) \\ {}&+ \frac{2s-1}{2} \Tr \left[  \lp \Dd_0^2 + \lp s-\tfrac{1}{2} \rp^2 \rp^{-1}   -   \lp \Dd_0^2 + \lp s_0-\tfrac{1}{2} \rp^2 \rp^{-1}   \right] \\
{}& - \frac{\Area X}{ 4 \pi} \int_{\mathbb R} \xi \coth (\pi \xi) \lp \frac{2s-1}{\xi^2 + (s-\tfrac{1}{2})^2 }  -  \frac{2s-1}{\xi^2 + (s_0-\tfrac{1}{2})^2 }     \rp d\xi.
\end{align*}

Then using similar computations as before, we get:
\begin{equation}\label{derzeta2}
\begin{aligned}
\frac{d}{ds} \left[ \frac{1}{2s-1} \lp \frac{Z'_0}{Z_0}  \rp(s) \right]
={}& \frac{-4k \log 2}{(2s-1)^2}  - \lp s-\tfrac{1}{2} \rp\Tr \Rr_{\Dd_0^2}^2 \lp - \lp s-\tfrac{1}{2} \rp^2 \rp -\frac{d}{ds}F(s).
\end{aligned}
\end{equation}

By \eqref{derzetafinal} and \eqref{derzeta2}, we conclude that
\begin{equation}\label{concl}
\begin{aligned}
\frac{d}{ds}\left[\frac{1}{2s-1} \left[\Dl + 2k \log 2 - \lp \frac{Z'_0}{Z_0}  \rp (s) \right] \right] ={}& \lp \tfrac{1}{2}- s \rp \Tr \lp \Rr^2_{\Dd_t^2}  \rp \lp  - \lp s-\tfrac{1}{2} \rp^2 \rp \\
-{}& \lp \tfrac{1}{2}- s \rp \Tr \lp  \Rr^2_{\Dd_0^2} \rp  \lp  - \lp s-\tfrac{1}{2} \rp^2 \rp .
\end{aligned}
\end{equation}

By hypothesis, $\mathcal C_R(0) \cap \lp \tfrac{1}{2} + i \Spec \Dd_0 \rp = \emptyset$. Hence, by \cite[Theorem 1.2]{pfaffle}, there exists $t_0>0$ such that for any $t \leq t_0$, the circle of radius $R$ centered at $0$ does not intersect $\tfrac{1}{2}+ i \Spec \Dd_t$. Let $P_R^t$ be the spectral projector of $\Dd_t$ corresponding to $\{ \lambda \in \Spec \Dd_t: \tfrac{1}{2}  + i \lambda \in B_R(0) \}$.

We split the resolvents in \eqref{concl} by projecting near $\tfrac{1}{2}+i \Spec \Dd_t$ for each $t \geq 0$, and away from it:
\begin{equation}\label{proiectiipezero}
\begin{aligned}
{}& \lp \Rr^2_{\Dd_t^2}  \rp \lp  - \lp s-\tfrac{1}{2} \rp^2 \rp =  P^t_R  \Rr^2_{\Dd_t^2}   \lp  - \lp s-\tfrac{1}{2} \rp^2 \rp  +  (1-P^t_R)  \Rr^2_{\Dd_t^2} \lp  - \lp s-\tfrac{1}{2} \rp^2 \rp.
\end{aligned}
\end{equation}
Let us study the term $(1-P^t_R)  \Rr^2_{\Dd_t^2} \lp  - \lp s-\tfrac{1}{2} \rp^2 \rp$ in \eqref{proiectiipezero}. Recall from Section \ref{cpcalc} that we merged the correspondent family of Dirac operators $(\Dd_t)_{t>0}$ on the hyperbolic surfaces $(X,g_t)$, together with the Dirac operator $\Dd_0$ on the limit non-compact surface $(X \setminus \gamma,g_0)$ to obtain a cusp-surgery differential operator $\Dd$ of orders $(1,1,0)$. It follows that $\Dd^2 \in \Psi_{\cp}^{2,2,0}(X)$.
Moreover, by \cite[Theorem $1$]{cipriana}, we have that $\lp P^t_{R} \rp_{t \geq 0}=: P_{R} \in \Psi^{-\infty,-\infty,0}_{\cp}(X)$, thus $1-P_{R} \in \Psi_{\cp}^{0,0,0}(X)$, and therefore we get for $t \in [0, t_0)$, 
\[ (1-P_{R})  \Rr^2_{\Dd^2} \lp  - \lp s-\tfrac{1}{2} \rp^2 \rp \in \Hol \lp B_R(0)),  \Psi_{\cp}^{-4,-4,0}(X) \rp.  \]

By Proposition \ref{tracecp}, it follows that 
\begin{equation}\label{infotezacip1}
\begin{aligned}
{}^{\cp} \Tr  \lp 1-P_{R} \rp \Rr_{\Dd^2}^2 \lp - \lp s-\tfrac{1}{2} \rp^2  \rp {}&\in  \mathcal C^{\infty} \lp [0, t_0), \Hol \lp B_R(0) \rp \rp \\
{}&+ t^4 \log t  \mathcal C^{\infty} \lp [0, t_0), \Hol \lp B_R(0)  \rp \rp.
\end{aligned}
\end{equation}

We will now address the term $P^t_R  \Rr^2_{\Dd_t^2} \lp  - \lp s-\tfrac{1}{2} \rp^2 \rp$ in \eqref{proiectiipezero}, which is a meromorphic family of finite rank operators. Since the spectral projector $P^t_R$ and the resolvent commute, it is also equal to $P^t_R  \Rr^2_{\Dd_t^2} \lp  - \lp s-\tfrac{1}{2} \rp^2 \rp  P^t_R$. Recall that for any matrix $A \in \mathcal M_k (\mathbb C)$, we have $\Tr \lp A-\lambda \rp^{-1} = -\frac{d}{d \lambda} \log \det (A-\lambda)$, and furthermore
\[ \Tr (A-\lambda)^{-2}= \Tr \left[ \frac{d}{d \lambda} (A-\lambda)^{-1}  \right]   =- \frac{d^2}{d \lambda^2} \log \det  (A-\lambda). \]
We apply the above equality for $P_R^t R_{\Dd_t}^2 \lp  - \lp s-\tfrac{1}{2} \rp^2 \rp P^t_R  \in \End \lp \Range P_R^t \rp$, and we obtain 
\begin{equation}
\begin{aligned}
{}&\Tr P^t_R  \left[ \Dd_t^2+ \lp s-\tfrac{1}{2}\rp^2  \right]^{-2}  P^t_R=\Tr_{\vert_{\Range P^t_R}} \left[ \left[ P^t_r \Dd_t^2 P^t_R+ \lp s-\tfrac{1}{2}\rp^2 P^t_R \right]_{\vert_{\Range P^t_R}} \right]^{-2} \\
{}&=- \lp \frac{1}{2s-1} \frac{d}{ds} \rp^2  \log \det \left[ P^t_R \Dd_t^2 P^t_R+ \lp s-\tfrac{1}{2}\rp^2 P^t_R \right]_{\vert_{\Range P^t_R}},  
\end{aligned}
\end{equation}
where notice that $\det \left[ P^t_R \Dd_t^2 P^t_R+ \lp s-\tfrac{1}{2}\rp^2 P^t_R \right]_{\vert_{\Range P^t_R}}$ is a polynomial in $s$ of degree at most the sum of the multiplicities of the eigenvalues $\{ \lambda \in \Spec \Dd_0: \tfrac{1}{2}+i \lambda \in B_R(0)\}$, with smooth coefficients in $t$. Thus $\Tr  P^t_R  \Rr^2_{\Dd_t^2}   \lp  - \lp s-\tfrac{1}{2} \rp^2 \rp - \Tr  P^0_R  \Rr^2_{\Dd_0^2}   \lp  - \lp s-\tfrac{1}{2} \rp^2 \rp$ contributes to \eqref{concl} with
\begin{equation}\label{contribP0}
- \lp \frac{1}{2s-1} \frac{d}{ds} \rp^2  \left[ \log \frac{ \det \left[ P^t_R \Dd_t^2 P^t_R+ \lp s-\tfrac{1}{2}\rp^2 P^t_R \right]_{\vert_{\Range P^t_R}}}{ \det \left[ P^0_R \Dd_0^2 P^0_R+ \lp s-\tfrac{1}{2}\rp^2 P^0_R \right]_{\vert_{\Range P^0_R}}}   \right] ,
\end{equation}
which is a differential operator in $s$ applied to the logarithm of a rational function of $s$ with smooth coefficients in $t \in [0, t_0)$. Moreover, using Proposition \ref{atomi}, we can split $P^t_R$ as a direct sum of spectral projectors concentrated at each point $\lambda \in \Spec \Dd_0$. Indeed, by taking a smaller $t_0$ if needed (the minimum between $t_0$ and all the values $t_\lambda$ introduced for $\lambda \in \Spec \Dd_0$ in Proposition \ref{atomi}), we can write 
\[P^t_R=\sum_{{\substack{\lambda \in \Spec \Dd_0 \\
\tfrac{1}{2}+i \lambda \in B_R(0)}}} P^t_{\lambda},\]
and thus
\begin{equation}\label{contribbb}
\begin{aligned}
\frac{ \det \left[ P^t_R \Dd_t^2 P^t_R+ \lp s-\tfrac{1}{2}\rp^2 P^t_R \right]_{\vert_{\Range P^t_R}}}{ \det \left[ P^0_R \Dd_0^2 P^0_R+ \lp s-\tfrac{1}{2}\rp^2 P^0_R \right]_{\vert_{\Range P^0_R}}}  = \prod_{\substack{\lambda \in \Spec \Dd_0 \\
\tfrac{1}{2}+i \lambda \in B_R(0)}} Q^t_{\lambda}
(s) \overset{\text{\eqref{QtR}}}{=} Q_{t,R}(s).
\end{aligned}
\end{equation}

By combining \eqref{concl}, \eqref{infotezacip1}, \eqref{contribP0}, \eqref{contribbb}, and the fact that the RHS of  \eqref{concl} vanishes at $t=0$ the conclusion follows.
\end{proof}
Let us remark that choosing $t_0$ in a similar way as before, on a relatively compact set $U$ whose closure avoids the set $\tfrac{1}{2} + i \Spec \Dd_0$, we obtain 
\begin{equation}\label{comportamentdeppoli}
\begin{aligned}
\frac{d}{ds}  \frac{1}{2s-1} \left[ \lp \frac{Z_t'}{Z_t}  \rp(s) -  \lp \frac{Z_0'}{Z_0} \rp (s) +  2k \log 2 \right] {}&\in  t  \mathcal C^{\infty} \lp [0,t_0), \Hol \lp U \rp \rp \\
{}&+ t^4 \log t  \mathcal C^{\infty} \lp [0,t_0), \Hol \lp U \rp \rp.
\end{aligned}
\end{equation}
Moreover, if we localize near a pole of the limit logarithmic derivative of the Selberg zeta function of type $\tfrac{1}{2}+ i \lambda$, $\lambda \in \Spec \Dd_0$, then we get
\begin{align*}
{}&\frac{d}{ds} \frac{1}{2s-1} \left[\Dl - \lp \frac{Z'_0}{Z_0}  \rp (s) + 2k \log 2  - \frac{1}{2}   \frac{d}{ds}  \log Q_t^{\lambda}(s) \right]  \\
{}&\in t \mathcal C^{\infty} \lp [0, t_0), \Hol \lp B_{\epsilon} ( \tfrac{1}{2} + i \lambda) \rp \rp 
+ t^4 \log t  \mathcal C^{\infty} \lp [0, t_0), \Hol \lp B_{\epsilon}(\tfrac{1}{2}+i \lambda) \rp \rp,
\end{align*}
where $\epsilon$, $Q_t^{\lambda}(s)$ and $t_0:=t_{\lambda}$ are provided by Proposition \ref{atomi}.

\subsection{Coefficients at $s=1/2$ of the logarithmic derivative of the Selberg zeta function in the pinching process}
We specialize the previous result in a neighborhood of the special value $\frac{1}{2}$. By \cite[Theorem 29]{rares}, $0 \in \Spec \Dd_t^2$ if and only if $1/2$ is a pole of $\frac{Z_t'}{Z_t}$, the corresponding logarithmic derivative of the Selberg zeta function. 
Suppose from now on that $0 \notin \Spec \Dd_0^2$. Then using \cite[Theorem 1.2]{pfaffle}, there exists $t_0>0$ such that $0$ does not belong to the spectrum of $\Dd_t^2$ for $t \in [0, t_0)$. Therefore the family of resolvents $s \mapsto  \Rr_{\Dd_t^2}^2 \lp - \lp s-\tfrac{1}{2} \rp^2  \rp$ is holomorphic on a neighborhood of $1/2$, for any $t \in [0,t_0)$. Consider the Taylor asymptotic expansion of the logarithmic derivative of the Selberg zeta function in $1/2$:
\begin{equation}\label{dj} 
\Dl = \sum_{j \geq 0} d_j(t) \lp s-\tfrac{1}{2} \rp^j. 
\end{equation}
As in \cite[Equation (2.15)]{sarnak}, let us denote 
\begin{equation}\label{cj}
\frac{d}{ds}\left[\frac{1}{2s-1} \Dl \right]  = \sum_{j \geq -2} c_j(t) \lp s-\tfrac{1}{2} \rp^j,  
\end{equation}
then it is easy to see that for any $j \neq 1$, 
\begin{equation} \label{dc}
d_{j+2}(t)=\frac{2}{j+1} c_j(t). 
\end{equation}

\begin{corollary}
There exists $t_0>0$ so that for all $j \in \{ 0 \} \cup \mathbb N \setminus \{ 1\}$, the coefficients $d_j(t)$ of the logarithmic derivative of the Selberg zeta function defined in \eqref{dj} behave in the pinching process as
\[ d_j(t) \in \mathcal C^{\infty}[0,t_0) + t^4 \log t  \mathcal C^{\infty}[0,t_0) . \]
\end{corollary}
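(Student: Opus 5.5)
We work in a small disc $B_r(\tfrac12)$ on which $\tfrac12$ is the only candidate pole and no point of $\tfrac12+i\Spec\Dd_0$ lies on $\overline{B_r(\tfrac12)}$. This is possible because we assume $0\notin\Spec\Dd_0^2$ and the spectrum is discrete. Then we apply the localized version \eqref{comportamentdeppoli} of Theorem \ref{mainth} with $U=B_r(\tfrac12)$. The set $U$ is relatively compact, and its closure avoids $\tfrac12+i\Spec\Dd_0$, so no rational correction $Q$ is needed. By \cite[Theorem 1.2]{pfaffle}, after shrinking $t_0$ the closure of $U$ also avoids $\tfrac12+i\Spec\Dd_t$ for $t\in[0,t_0)$. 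We obtain
\[
\frac{d}{ds}\frac{1}{2s-1}\Dl=\frac{d}{ds}\frac{1}{2s-1}\Big[\Big(\frac{Z_0'}{Z_0}\Big)(s)-2k\log2\Big]+G(t,s),
\]
with $G\in t\,\mathcal C^\infty([0,t_0),\Hol(U))+t^4\log t\,\mathcal C^\infty([0,t_0),\Hol(U))$. The first term on the right is independent of $t$ and meromorphic on $U$, with at most a pole at $s=\tfrac12$ of order $2$ coming from the factor $\frac1{2s-1}$.

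Next we extract the coefficients $c_j(t)$ from \eqref{cj} by Cauchy's formula on a circle $\mathcal C_{r/2}(\tfrac12)$:
\[
c_j(t)=\frac1{2\pi i}\oint_{\mathcal C_{r/2}(1/2)}\frac{d}{ds}\Big[\frac{1}{2s-1}\Dl\Big](s-\tfrac12)^{-j-1}\,ds .
\]
The integral of the $t$-independent part gives a constant. For $G$, the key point is that a function of the form $a(t,s)+t^4\log t\, b(t,s)$, with $a,b$ smooth in $t$ up to $0$ and holomorphic in $s$, keeps this form after integration in $s$ over a compact contour. The reason is that integration over a compact curve commutes with $t$-derivatives and preserves smoothness up to $t=0$; the argument is the same as in Lemma \ref{integr1var}, but simpler, since here we integrate scalar functions. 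Hence
\[
c_j(t)\in\mathcal C^\infty[0,t_0)+t^4\log t\,\mathcal C^\infty[0,t_0)
\]
for all $j\geq-2$.

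Finally we convert the $c_j$ into the $d_j$ using \eqref{dc}: for $j\neq1$ we have $d_{j+2}=\frac{2}{j+1}c_j$. This relation is obtained by expanding $\frac{1}{2(s-1/2)}\sum_l d_l(s-\tfrac12)^l$ and differentiating term by term. It covers $d_j$ for every $j\geq2$ with $j\neq3$, since $j=-1$ would give division by $0$. The case $j=-1$ in \eqref{dc} relates to $d_1$, which is exactly the excluded index $1$ in the statement; one checks that the coefficient of $(s-\tfrac12)^{-1}$ vanishes. The index $j=3$, corresponding to $j=1$ in \eqref{dc}, is also problematic as written. I would recheck the indexing carefully: the expansion gives $c_{l-2}=\frac{l-1}{2}d_l$, so $c_j=\frac{j+1}{2}d_{j+2}$. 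This formula degenerates only at $j=-1$, i.e. it fails only for $d_1$. That is consistent with the statement excluding $j=1$, so it covers all $j\geq2$.

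For $d_0$ the relation instead reads $c_{-2}=-\tfrac12 d_0$, and it yields the claim as well. Alternatively, $d_0(t)$ can be obtained directly from \eqref{comportamentdeppoli} multiplied by $(2s-1)^2$ and evaluated at $s=\tfrac12$.

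The main obstacle I expect is bookkeeping rather than analysis: matching indices between \eqref{dj}, \eqref{cj} and \eqref{dc}, and making sure that the constants $-2k\log2$ and the $t=0$ terms contribute only smooth (constant) pieces. The analytic input is entirely supplied by \eqref{comportamentdeppoli} and the uniform avoidance of spectrum near $s=\tfrac12$.
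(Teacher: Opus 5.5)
Your proposal is correct and follows the paper's argument. You apply \eqref{comportamentdeppoli} on a small disc around $s=\tfrac12$, which is allowed since $0\notin\Spec\Dd_0$. You then read off the Laurent coefficients $c_j(t)$; your Cauchy-integral step only makes explicit what the paper leaves implicit. Finally, you convert the $c_j(t)$ into the $d_j(t)$ via \eqref{dc}.

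Your recheck of the indexing is also right. The relation is $c_j=\frac{j+1}{2}d_{j+2}$, so the exclusion in \eqref{dc} should read $j\neq -1$ rather than $j\neq 1$. It is precisely this degenerate index, where $c_{-1}=0$, that leaves $d_1$ out of the statement.
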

\begin{proof}
By \eqref{comportamentdeppoli}, it follows that the coefficients of $\frac{d}{ds}\left[\frac{1}{2s-1} \Dl \right]$ defined in \eqref{cj} have the following behavior as $t \searrow 0$:
\[ c_j(t) \in  \mathcal C^{\infty}[0, t_0) + t^4 \log t C^{\infty}[0, t_0) ,  \]
where $t_0>0$ is chosen as in the proof of the Theorem \ref{mainth}. Therefore, by \eqref{dc}, the coefficients $d_j(t)$ for $j \neq 1$ have the same behavior as $t \searrow 0$ in the pinching process.
\end{proof}

\section{Towards the behavior of the Selberg zeta function in the pinching process}

As in the proof of \cite[Theorem $24$]{rares}, for $\Re(s)>1$ we isolate in the absolutely convergent series $\Dl$ the primitive conjugacy classes of the oriented pinched geodesic $\gamma$ of length $t$ and of its inverse. Recall our standing assumption $\varepsilon(\gamma)=-1$:
\begin{equation}\label{derlog1}
\begin{aligned}
\Dl ={}&  \sum_{[\eta]} \sum_{n=1}^{\infty} \frac{l_t(\eta) \varepsilon^n(\eta) e^{-n l_t(\eta) \lp s- \tfrac{1}{2} \rp }}{ 2 \sinh \frac{n l_t (\eta)}{2}} \\
={}& 2 \cdot \sum_{n=1}^{\infty} \frac{(-1)^n t e^{-nt \lp s-\tfrac{1}{2} \rp }}{2 \sinh \tfrac{nt}{2}}       + \sum_{[\eta] \neq [\gamma]} \sum_{n=1}^{\infty} \frac{l_t(\eta) \varepsilon^n(\eta) e^{-n l_t(\eta) \lp s- \tfrac{1}{2} \rp }}{ 2 \sinh \frac{n l_t (\eta)}{2}}.  
\end{aligned}
\end{equation}

Assume that we could prove that for some particular value $s_0$ with $\Re s_0>1$, there exists $t_0>0$ so that the value art $s_0$ of the log-derivative of $Z_t$, $\lp \frac{Z_t'}{Z_t} \rp (s_0)$, is smooth for $t \in [0, t_0)$. Then combining again \eqref{selcomp1} and Proposition \ref{tracecp} as in the previous section, we would deduce that the Selberg zeta function $Z_t$ is of class $\mathcal C^3$ in $(t,s)\in[0,\infty)\times \cz$, and holomorphic in $s$.

A natural choice for such a special value could be $s=2$. For this particular $s$, we
can prove with some effort that the function 
\[ S(t) := \sum_{n=1}^{\infty} \frac{(-1)^n t e^{-\tfrac{3nt}{2}}}{2 \sinh \tfrac{nt}{2}}  \]
is smooth for $t\in [0,\infty)$, proving smoothness of the first term in \eqref{derlog1}. 
The contribution of those simple geodesics disjoint from $\gamma$ in the second term of \eqref{derlog1} can also be shown to be smooth in $t\geq 0$, by the same proof as in Section \ref{Section3}.
We are unable however to treat the terms corresponding to the geodesics crossing $\gamma$. The length of such geodesics grows to infinity in the pinching limit, but it is not so easy to uniformly bound their derivatives in $t$. We are thus unable to find the desired special value of $s$ for which the logarthmic derivative of the Zeta function is smooth at $t=0$.

Another special value for $s$ where we could attempt to prove smoothness would be the central value $s=1/2$. Using \cite[Theorem 30]{rares}, we get:
\begin{align*}
2 \lp \frac{Z'_0}{Z_0}\rp \lp \frac{1}{2} \rp ={}&  \Area X \lim_{s \to 1/2} \lp s-\tfrac{1}{2} \rp \tan (\pi s) +4k \log 2= \frac{- \Area X}{\pi} + 4k \log 2,
\end{align*}
thus 
\begin{align*}
\lp \frac{Z'_0}{Z_0}\rp \left(\frac{1}{2}\right) = 2(g-1 + k \log 2).
\end{align*}
Thus, the Selberg trace formula implies only that the logarithmic derivative of the zeta function at the central value $1/2$ is constant on the spin moduli space. This identity does not tell us anything  about the central value of the zeta function itself. We must therefore leave open the question of proving $C^k$ asymptotics of the zeta function for any $k>0$.


\begin{thebibliography}{99}

\bibitem{ars}
P.~Albin, F.~Rochon, D.~Sher, \emph{Resolvent, heat kernel and torsion under degeneration to fibered cusps}, Mem.\ Amer.\ Math.\ Soc.\ \textbf{269} (2021), no.\ 1314.

\bibitem{cipriana}
C.~Anghel, \emph{On the spectrum of the Dirac operator on degenerating Riemannian surfaces}, arXiv:2409.05616.

\bibitem{Bar}
C.~B\"ar, \emph{The Dirac operator on hyperbolic manifolds of finite volume}, J.\ Differential Geom.\ \textbf{54} (2000), no.\ 3, 439--488.

\bibitem{BolteStiepanSelbergForDirac}
J.~Bolte, H.~M.~Stiepan, \emph{The Selberg trace formula for Dirac operators}, J.\ Math.\ Phys.\ \textbf{47} (2006), no.\ 11.

\bibitem{bgaud}
J.~Bourguignon, P.~Gauduchon, \emph{Spineurs, opérateurs de Dirac et variations de métriques}, Comm.\ Math.\ Phys.\ {\bf 144} (1992), 581--599.

\bibitem{moira}
M.~Chas, \emph{Relations between word length, hyperbolic length and self-intersection number of curves on surfaces}, Recent advances in mathematics, 45–75, Ramanujan Math.\ Soc.\ Lect.\ Notes Ser., \textbf{21}, Mysore, 2015.

\bibitem{colbois-courtois}
B.~Colbois, G.~Courtois, \emph{Les valeurs propres inf\'erieures \`a 1/4 des surfaces de Riemann de petit rayon d’injectivit\'e}, Comment.\ Math.\ Helvet.\ {\bf 64}, 349--362 (1989).

\bibitem{eichler}
M.~Eichler, \emph{Grenzkreisgruppen und kettenbruchartige Algorithmen}, Acta Arithmetica \textbf{11}
(1965), no.\ 2, 169-–180.


\bibitem{fedosovapohl}
K.~Fedosova, A.~Pohl, \emph{Meromorphic continuation of Selberg zeta functions with twists having non-expanding cusp monodromy}, Selecta Math.\ \textbf{26} (2020), no.\ 1, Paper No.\ 9, 55 pp.


\bibitem{GMP}
C.~Guillarmou, S.~Moroianu, J.~Park, \emph{Eta invariant and Selberg zeta function of odd type over convex co-compact hyperbolic manifolds}, Adv.\ Math.\ {\bf 225} (2010), no.\ 5, 2464–-2516.

\bibitem{dhocker}
E.~D'Hoker, D.~H.~Phong, \emph{On determinants of Laplacians on Riemann surfaces}, Comm.\ Math.\ Phys.\ \textbf{104} (1986), no. 4, 537--545.

\bibitem{grieser}
D.~Grieser, \emph{Basics of the b-calculus}, Approaches to Singular Analysis. Operator Theory: Advances and Applications \textbf{125}, 30--84, Birkhauser, 2001.

\bibitem{Ji}
L.~Ji, \emph{Spectral degeneration of hyperbolic Riemann surfaces}, J.\ Differential Geom.\ \textbf{38} (1993), no. 2, 263--313.  

\bibitem{mcdonald}
P.~McDonald, \emph{The Laplacian for spaces with cone-like singularities}, PhD Thesis, MIT (1990).

\bibitem{melrose}
R.B.~Melrose, \emph{The Atiyah-Patodi-Singer Index Theorem}, Research Notes in Mathematics {\bf 4}, A.K.Peters, (1993). 

\bibitem{moroweyl}
S.~Moroianu, \emph{Weyl laws on open manifolds}, Math.\ Ann.\  {\bf 340} (2008), no.\ 1, 1--21. 

\bibitem{park}
J.~Park, \emph{Eta invariants and regularized determinants for odd dimensional hyperbolic manifolds with cusps},
Amer.\ J.\ Math.\ \textbf{127} (3), (2005), 493--534.

\bibitem{pfaffle}
F.~Pfäffle, \emph{Eigenvalues of Dirac operators for hyperbolic degenerations}, Manuscr.\ Math.\  {\bf 116} (2005), no. $1$, 1--29. 

\bibitem{sarnak}
P.~Sarnak, \emph{Determinants of Laplacians}, Comm.\ Math.\ Phys.\ \textbf{110} (1987), no. 1, 113--120.

\bibitem{Schulze}
M.~Schulze, \emph{On the resolvent of the Laplacian on functions for degenerating surfaces of finite geometry},
J.\ Funct.\ Anal.\ \textbf{236} (2006), no.\ 1, 120--160.

\bibitem{selberg}
A.~Selberg, \emph{Harmonic analysis and discontinuous groups in weakly symmetric Riemannian spaces with applications to Dirichlet series}, J.\ Indian Math.\ Soc.\ \textbf{20} (1956), 47--87.

\bibitem{series}
C.~Series, \emph{An extension of Wolpert’s derivative formula},
Pacific J.\ Math.\ \textbf{197} (2001), 223–239.

\bibitem{rares}
R.~Stan, \emph{The Selberg trace formula for spin Dirac operators on degenerating hyperbolic surfaces}, Doc.\ Math.\ \textbf{30} (2025), 1--39.

\bibitem{wolpert}
S.~Wolpert, \emph{Behavior of geodesic-length functions on Teichmüller space},
J.\ Differential Geom.\ \textbf{79} (2008), no. 2, 277–334.

\end{thebibliography}
\end{document}